\newtheorem{theorem}{\bf Theorem }
\newtheorem{definition}[theorem]{\bf Definition}
\newtheorem{lemma}[theorem]{\bf Lemma}
\newtheorem{corollary}[theorem]{\bf Corollary}
 \newcounter{todocounter}
 \newcounter{commentcounter}
\newcommand{\N}{\ensuremath{\mathbb{N}}}
\newcommand{\R}{\ensuremath{\mathbb{R}}}
\renewcommand{\P}{\ensuremath{\mathbb{P}}}
\newcommand{\1}{\ensuremath{\mathbbm{1}}}
\newcommand{\M}{\ensuremath{\mathcal{M}}}
\def\bfmath#1{\mathchoice
        {\mbox{\boldmath$#1$}}%
        {\mbox{\boldmath$#1$}}%
        {\mbox{\boldmath$\scriptstyle#1$}}%
        {\mbox{\boldmath$\scriptscriptstyle#1$}}}%
\newcommand{\numAlleles}{\ensuremath{K}}
\newcommand{\popSize}{\ensuremath{N}}
\newcommand{\mutVec}{{\ensuremath{\boldsymbol{\theta}}}}
\newcommand{\mutCmp}{\ensuremath{\theta}}
\newcommand{\mutTot}{\ensuremath{\Theta}}
\newcommand{\selMatrix}{\ensuremath{\boldsymbol{\sigma}}}
\newcommand{\selCmp}{\ensuremath{\sigma}}
\newcommand{\meanFitness}{\bar\sigma}
\newcommand{\iIdx}{\ensuremath{\mathbf{i}}}
\newcommand{\iCmp}{\ensuremath{i}}
\newcommand{\iSet}{\ensuremath{\mathcal{I}}}
\newcommand{\xVec}{\ensuremath{\mathbf{x}}}
\newcommand{\xCmp}{\ensuremath{x}}
\newcommand{\xVecTr}{\ensuremath{\boldsymbol{\xi}}}
\newcommand{\xCmpTr}{\ensuremath{\xi}}
\newcommand{\yVec}{\ensuremath{\mathbf{y}}}
\newcommand{\yCmp}{\ensuremath{y}}
\newcommand{\simplex}{\ensuremath{\Delta}}
\newcommand{\uJacobi}{\ensuremath{R}}
\newcommand{\uJacLength}{\ensuremath{c}}
\newcommand{\jacobi}{\ensuremath{P}}
\newcommand{\jacobiTr}{\ensuremath{\tilde{P}}}
\newcommand{\eVecNeutr}{\ensuremath{\lambda}}
\newcommand{\jacLength}{\ensuremath{C}}
\newcommand{\polyMatrix}{\ensuremath{\mathcal{G}}}
\newcommand{\modJacobi}{\ensuremath{S^{\mutVec}}}
\newcommand{\kIdx}{\ensuremath{\mathbf{k}}}
\newcommand{\lIdx}{\ensuremath{\mathbf{l}}}
\newcommand{\nIdx}{\ensuremath{\mathbf{n}}}
\newcommand{\nIdxCmp}{\ensuremath{n}}
\newcommand{\nTot}{\ensuremath{N}}
\newcommand{\mIdx}{\ensuremath{\mathbf{m}}}
\newcommand{\mIdxCmp}{\ensuremath{m}}
\newcommand{\mTot}{\ensuremath{M}}
\newcommand{\statNeutr}{\ensuremath{\Pi_0}}
\newcommand{\eVal}{\ensuremath{\Lambda}}
\newcommand{\eVec}{\ensuremath{\mathbf{u}}}
\newcommand{\uCoeff}{\ensuremath{u}}
\newcommand{\eFun}{\ensuremath{B}}
\newcommand{\eIdx}{\ensuremath{n}}
\newcommand{\eIdxM}{\ensuremath{m}}
\newcommand{\genFull}{\ensuremath{\mathscr{L}}}
\newcommand{\genNeutral}{\ensuremath{\mathscr{L}_0}}
\newcommand{\genSel}{\ensuremath{\mathscr{L}_\sigma}}
\newcommand{\alleleFreqRV}{\ensuremath{\mathbf{X}}}
\newcommand{\diffTerm}{\ensuremath{b}}
\newcommand{\driftTerm}{\ensuremath{a}}
\newcommand{\diffTermTr}{\ensuremath{\tilde{b}}}
\newcommand{\driftTermTr}{\ensuremath{\tilde{a}}}
\newcommand{\stat}{\ensuremath{\Pi}}
\newcommand{\normConst}{\ensuremath{C_\stat}}
\newcommand{\matrixCutoff}{\ensuremath{D}}
\newcommand{\smallerIndices}{\ensuremath{\mathcal{U}}}
\newcommand{\sref}[1]{Section~\ref{#1}}
\newcommand{\fref}[1]{Figure~\ref{#1}}
\newcommand{\cref}[1]{Chapter~\ref{#1}}
\def\cutoff{\matrixCutoff}
\def\supN{{[\cutoff]}}
\def\bfa{{\bfmath{a}}}
\def\bfn{\bfmath{n}}
\def\bfw{\bfmath{w}}
\begin{document}

\title{An explicit transition density expansion for a multi-allelic Wright-Fisher diffusion with general diploid selection}
\author{Matthias Steinr\"ucken$^{1,*}$, Y. X. Rachel Wang$^{1,\dagger}$ and Yun S. Song$^{1,2,\ddagger}$}

\footnotetext[1]{Department of Statistics, University of California, Berkeley, CA 94720, USA}
\footnotetext[2]{Computer Science Division, University of California, Berkeley, CA 94720, USA}

	%

\date{}

\maketitle

\begin{abstract}
Characterizing time-evolution of allele frequencies in a population is a fundamental problem in population genetics. In the Wright-Fisher diffusion, such dynamics is captured by the transition density function, which satisfies well-known partial differential equations. For a multi-allelic model with general diploid selection, various theoretical results exist on representations of the transition density, but finding an explicit formula has remained a difficult problem. In this paper, a technique recently developed for a diallelic model is extended to find an explicit transition density for an arbitrary number of alleles, under a general diploid selection model with recurrent parent-independent mutation. Specifically, the method finds the eigenvalues and eigenfunctions of the generator associated with the multi-allelic diffusion, thus yielding an accurate spectral representation of the transition density. Furthermore, this approach allows for efficient, accurate computation of various other quantities of interest, including the normalizing constant of the stationary distribution and the rate of convergence to this distribution.
\renewcommand{\thefootnote}{}\footnote{Email: ${}^*${\tt steinrue@stat.berkeley.edu}, ${}^\dagger${\tt rachelwang@stat.berkeley.edu}, ${}^\ddagger${\tt yss@cs.berkeley.edu}}
\end{abstract}


\section{Introduction}
\label{sec_introduction}

Diffusion processes can be used to describe the evolution of population-wide allele frequencies in large populations, and they have been successfully applied in various population genetic analyses in the past. \citet{Karlin1981}, \citet{Ewens2004}, and \citet{Durrett2008} provide excellent introduction to the subject.  The diffusion approximation captures the key features of the underlying evolutionary model and provides a concise framework for describing the dynamics of allele frequencies, even in complex evolutionary scenarios.  However, finding explicit expressions for the transition density function (TDF) is a challenging problem for most models of interest.  Although a partial differential equation (PDE) satisfied by the TDF can be readily obtained from the standard diffusion theory, few models admit analytic solutions.

Since closed-form transition density functions are unknown for general diffusions, approaches such as finite difference methods \citep{Bollback2008,Gutenkunst2009} and series expansions \citep{Lukic2011} have been adopted recently to obtain approximate solutions.
In a finite difference scheme, one needs to discretize the state space, but since the TDF depends on the parameters of the model (e.g. the selection coefficients), the suitability of a given discretization might depend strongly on the parameter values, and whether a particular discretization would produce accurate solutions is difficult to predict \emph{a priori}.
Series expansions allow one to circumvent the problem of choosing an appropriate discretization for the state space.  However, if the chosen basis functions in the representation are not the eigenfunctions of the diffusion generator, as in \citet{Lukic2011}, then one has to solve a system of coupled ordinary differential equations (ODE) to obtain the transition density. \citet{Lukic2011} solve this system of ODEs numerically, which may introduce potential errors of numerical approximations.

If the eigenvalues and eigenfunctions of the diffusion generator can be found, the spectral representation (which in some sense provides the optimal series expansion) of the TDF can be obtained.
For the one-locus Wright-Fisher diffusion with an arbitrary number  $\numAlleles$ of alleles evolving under a \emph{neutral} parent-independent mutation (PIM) model, \citet{Shimakura1977} and \citet{Griffiths1979} derived an explicit spectral representation of the TDF using orthogonal polynomials.  More recently, \citet{Baxter2007} derived the same solution by diagonalizing the associated PDE using a suitable coordinate transformation, followed by solving for each dimension independently.
In a related line of research, \citet{Griffiths1983} and \citet{Tavare1984} expressed the time-evolution of the allele frequencies in terms of a stochastic process dual to the diffusion, and showed that the resulting expression is closely related to the spectral representation.

The duality approach was later extended by \citet{Barbour2000} to incorporate a general selection model.
Although theoretically very interesting, this approach does not readily lead to efficient computation of the TDF because of the following reason:  Computation under the dual process requires evaluating the moments of the stationary distribution. 
Although the functional form of this distribution is known \citep{Ethier1994,Barbour2000}, the normalization constant and moments can only be computed analytically in special cases \citep{Genz2003}, and numerical computation under a general model of diploid selection is difficult.  Incidentally, this issue arises in various applications (e.g., \citealt{Buzbas2009a, Buzbas2011}), and it has therefore received significant attention in the past; see, for example, \citet{Donnelly2001} and \citet{Buzbas2009}.

Many decades ago, \citet{Kimura1955,Kimura1957} addressed the problem of finding an \emph{explicit} spectral representation of the TDF for models with selection.   Specifically, in the case of a \emph{diallelic} model with special selection schemes, he employed a perturbation method to find the required eigenvalues and eigenfunctions of the diffusion generator.  Being perturbative in the selection coefficient, this approach is accurate only for small selection parameters. 
Recently, \citet{Song2012} revisited this problem and developed an alternative method of deriving an explicit spectral representation of the TDF for the diallelic Wright-Fisher diffusion under a \emph{general} diploid selection model with recurrent mutation.
In contrast to \citeauthor{Kimura1955}'s approach, this new approach is non-perturbative and is applicable to a broad range of parameter values. 
The goal of the present paper is to extend the work of \citet{Song2012} to an arbitrary number $\numAlleles$ of alleles, assuming a PIM model with general diploid selection.   

The rest of this paper is organized as follows.  In \sref{sec_background}, we lay out the necessary mathematical background and review the work of \citet{Song2012} in the case of a diallelic ($\numAlleles=2$) model with general diploid selection.  In \sref{sec_neutral_case}, we describe the spectral representation for the \emph{neutral} PIM model with an arbitrary number $\numAlleles$ of alleles. Then, in \sref{sec:selected_case}, we generalize the method of \citet{Song2012} to 
an arbitrary $\numAlleles$-allelic PIM model with general diploid selection.
We demonstrate in \sref{sec_simulations} that the quantities involved in the spectral representation converge rapidly.
Further, we discuss the computation of the normalization constant of the stationary distribution under mutation-selection balance and the rate of convergence to this distribution.  We conclude in \sref{sec_discussion} with potential applications and extensions of our work.

\section{Background}
\label{sec_background}

\subsection{The Wright-Fisher diffusion} 
\label{sec:WF}
In this paper, we consider a single locus with $\numAlleles$ distinct possible alleles.        
The dynamics of the allele frequencies in a large population is commonly approximated by the Wright-Fisher diffusion on the 
$(\numAlleles-1$)-simplex
\begin{equation}
	\simplex_{\numAlleles-1} := \left\{ \xVec  \in \R_{\geq 0}^{\numAlleles-1} : 1 - |{\xVec}| \geq 0 \right\},
\end{equation}
where $|{\xVec}|=\sum_{i=1}^{\numAlleles-1} \xCmp_i$.
For a given ${\xVec}=(\xCmp_1,\ldots,\xCmp_{\numAlleles-1}) \in \simplex_{\numAlleles-1}$, the component $\xCmp_i$ denotes the population frequency of allele $i \in \{1,\ldots,\numAlleles-1\}$.  The frequency of allele $\numAlleles$ is given by $x_\numAlleles = 1 - |\xVec|$.

The associated diffusion generator $\genFull$ is a second order differential operator of form
\begin{equation}\label{eq_backward_generator}
	\begin{split}
		\genFull f({\xVec}) = \frac{1}{2} \sum_{i,j=1}^{\numAlleles-1} \diffTerm_{i,j}({\xVec}) \frac{\partial^2}{\partial \xCmp_i \partial \xCmp_j} f({\xVec}) + \sum_{i=1}^{\numAlleles-1} \driftTerm_i({\xVec}) \frac{\partial}{\partial \xCmp_i}f({\xVec}),
	\end{split}
\end{equation}
which acts on twice continuously differentiable functions $f \colon \simplex_{\numAlleles-1} \to \R$.
The diffusion coefficient $\diffTerm_{i,j}({\xVec})$ is given by
\begin{equation}
	\diffTerm_{i,j}({\xVec}) = \xCmp_i (\delta_{i,j} - \xCmp_j),
\end{equation}
where the Kronecker delta $\delta_{i,j}$ is equal to 1 if $i=j$ and 0 otherwise.
For a neutral PIM model, we use $\mutCmp_i = 4 \popSize \uCoeff_i$ to denote the population-scaled mutation rate associated with allele $i$, where $\uCoeff_i$ is the probability of mutation producing allele $i$ per individual per generation and $\popSize$ is the effective population size. 
Under this model, the drift coefficient $\driftTerm_i({\xVec})$ is given by
\begin{equation}
	\driftTerm_i({\xVec}) = \frac{1}{2} (\mutCmp_i - |\mutVec| \xCmp_i),
\end{equation}
where $\mutVec = (\mutCmp_1,\ldots,\mutCmp_\numAlleles)\in \R_{>0}^\numAlleles$ and $|\mutVec| = \sum_{i=1}^\numAlleles \mutCmp_i$.

Consider a general diploid selection model in which the relative fitness of a diploid individual with one copy of allele $i$ and one copy of allele $j$ is given by  $1 + 2s_{i,j}$.  We measure fitness relative to that of an individual with two copies of allele $K$, thus $s_{K,K} = 0$. The diffusion generator in this case is given by $\genFull  = \genNeutral + \genSel$, where
$\genNeutral$ denotes the diffusion generator under neutrality and the additional term $\genSel$, which 
captures the contribution from selection to the drift coefficient $\driftTerm_i({\xVec})$, is given by 
\begin{equation}\label{eq_drift_selection}
	\genSel= 
	 \sum_{i=1}^{\numAlleles-1}\xCmp_i \left[\selCmp_{i}(\xVec) - \meanFitness({\xVec})\right] \frac{\partial}{\partial \xCmp_i},
\end{equation}
where $\selCmp_{i}(\xVec)$ denotes the marginal fitness of type $i$ and  $\meanFitness (\xVec)$ denotes the mean fitness of the population with allele frequencies $\xVec$.  More precisely, 
\begin{equation}
 \selCmp_{i}(\xVec)=\sum_{j=1}^\numAlleles \selCmp_{i,j} \xCmp_j
\label{eq:marginalFitness}
\end{equation}
and
\begin{equation}
\label{eq:meanFitness}
	\meanFitness (\xVec) := \sum_{i,j=1}^\numAlleles \selCmp_{i,j} \xCmp_i \xCmp_j,
\end{equation}
where $\selCmp_{i,j} = 2 \popSize s_{i,j}$.
Intuitively, the population frequency of a given allele tends to increase if its marginal fitness is higher than the mean fitness of the population. 
The selection scheme is specified by a symmetric matrix $\selMatrix = (\selCmp_{i,j})_{1 \leq i,j \leq \numAlleles} \in \R^{\numAlleles\times \numAlleles}$ of population-scaled selection coefficients, with $\selCmp_{\numAlleles,\numAlleles} = 0$.

The operators $\genNeutral$ and $\genFull$ are elliptic inside the simplex $\simplex_{\numAlleles-1}$, but not on the boundaries. Thus, the precise domain of the generator is not straightforward to describe, but \citet{Epstein2011} give a suitable characterization.
 
\subsection{Spectral representation of the transition density function}
\label{sec:background_spec_rep}
For $t\geq 0$, the time evolution of a diffusion $\alleleFreqRV_t$ on the simplex $\simplex_{\numAlleles-1}$ is described by the transition density function $p(t;\xVec,\yVec)d\yVec = \P [\alleleFreqRV_t \in d\yVec \mid \alleleFreqRV_0 = \xVec ]$, where $\xVec, \yVec \in \simplex_{\numAlleles-1}$.
The transition density function satisfies the Kolmogorov backward equation
\begin{equation}\label{eq_backward_equation}
	\frac{\partial}{\partial t} p(t;\xVec,\yVec) = \genFull p(t;\xVec,\yVec),
\end{equation}
where $\genFull$, the generator associated with the diffusion, is a differential operator in $\xVec$.

We briefly review the framework underlying the spectral representation of the transition density function. The operator $\genFull$ is said to be symmetric
with respect to a density $\pi \colon \simplex_{\numAlleles-1} \to \R_{\geq 0}$ if, for all twice continuously differentiable functions $f \colon \simplex_{\numAlleles-1} \to \R$ and $g \colon \simplex_{\numAlleles-1} \to \R$ that belong to the domain of the operator, the following equality holds:
\begin{equation}
	\int_{\simplex_{\numAlleles-1}} [\genFull f(\xVec)] g(\xVec) \pi(\xVec) d\xVec = \int_{\simplex_{\numAlleles-1}} f(\xVec)  [\genFull g(\xVec)] \pi(\xVec) d\xVec.
\end{equation}
A straightforward calculation using integration by parts yields that the diffusion generators described in \sref{sec:WF} are symmetric with respect to their associated stationary densities.

Theorem~1.4.4 of \citet{Epstein2011} guarantees that an unbounded symmetric operator~$\genFull$ of the kind defined in \sref{sec:WF} of this paper has countably many eigenvalues $\{-\eVal_0,-\eVal_1,-\eVal_2,\ldots\}$, which are real and non-positive, satisfying
\begin{equation}
	0 \leq \eVal_0 \leq \eVal_1 \leq \eVal_2 \leq \cdots,
\end{equation}
with $\eVal_n\to\infty$ as $n\to\infty$.
An eigenfunction $\eFun_n \colon \simplex_{\numAlleles-1} \to \R$ with eigenvalue $-\eVal_n$ satisfies 
\begin{equation}\label{eq_eigenvalue_equation}
	\genFull \eFun_n (\xVec) = - \eVal_n \eFun_n (\xVec),
\end{equation}
and, furthermore, $\eFun_n (\xVec)$ is an element of the Hilbert space $L^2\big(\simplex_{\numAlleles-1}, \pi(\xVec) \big)$ of functions square integrable with respect to the density $\pi(\xVec)$, equipped with the canonical inner product~$\langle \cdot , \cdot \rangle_\pi$.
If $\genFull$ is symmetric with respect to $\pi(\xVec)$, then its eigenfunctions are orthogonal with respect to $\pi(\xVec)$:
\begin{equation}
	 \langle B_n, B_m \rangle_\pi := \int_{\simplex_{\numAlleles-1}} \eFun_n(\xVec) \eFun_m(\xVec) \pi(\xVec) d\xVec = \delta_{n,m} d_n,
\end{equation}
where $\delta_{n,m}$ is the Kronecker delta and $d_n$ are some constants. In the cases considered in this paper, the eigenfunctions form a basis of the Hilbert space $L^2\big(\simplex_{\numAlleles-1}, \pi(\xVec) \big)$.

It follows from equation~\eqref{eq_eigenvalue_equation} that $\exp(- \eVal_n t) \eFun_n(\xVec)$ is a solution to the Kolmogorov backward equation~\eqref{eq_backward_equation}. By linearity of~\eqref{eq_backward_equation}, the sum of two solutions is again a solution. Combining the initial condition $p(0;\xVec,\yVec) = \delta(\xVec - \yVec)$ with the fact that $\{\eFun_\eIdx(\xVec)\}$ form a basis yields the following spectral representation of the transition density:
\begin{equation}\label{eq_tdf}
	p(t;\xVec,\yVec) = \sum_{n=0}^\infty \frac{1}{d_n} e^{- \eVal_n t} \eFun_n (\xVec) \eFun_n (\yVec) \pi(\yVec).
\end{equation}
The initial condition being the Dirac delta $\delta(\xVec - \yVec)$ corresponds to the frequency at time zero being $\xVec$.

\subsection{Univariate Jacobi polynomials}

The univariate Jacobi polynomials play an important role throughout this paper.  Here we review some key facts about this particular type of classical orthogonal polynomials. An excellent treatise on univariate orthogonal polynomials can be found in \citet{Szego1939} and a comprehensive collection of useful formulas can be found in \citet[Chapter~22]{Abramowitz1965}.

The Jacobi polynomials $p^{(a,b)}_{n}(z)$, for $z \in [-1,1]$,  satisfy the differential equation
\begin{equation}
(1-z^2) \frac{d^2f(z)}{dz^2} + [b-a-(a+b+2)z]\frac{d f(z)}{dz} + n(n+a+b+1) f(z) = 0.
\label{eq_diffeq_jp}
\end{equation}
For given $a,b > -1$, the set $\{p^{(a,b)}_{n}(z)\}_{n=0}^\infty$ forms an orthogonal system on the interval $[-1,1]$ with respect to the weight function $(1-z)^a (1+z)^b$. For a more convenient correspondence with the diffusion parameters, we define the following modified Jacobi polynomials, for $x\in [0,1]$ and $a,b > 0$:
\begin{equation}
\uJacobi^{(a,b)}_{n}(x) = p^{(b-1,a-1)}_{n}(2x-1).
\label{eq_R}
\end{equation}
This definition is slightly different from that adopted by \citet{Griffiths2010}.

Equation \eqref{eq_diffeq_jp} implies that the modified Jacobi polynomials $\uJacobi^{(a,b)}_{n}(x)$, for $x\in [0,1]$, satisfy the differential equation
\begin{equation}\label{eq_R_ode}
	x(1-x) \frac{d^2f(x)}{dx^2} + [a-(a+b)x]\frac{d f(x)}{dx} + n(n+a+b-1) f(x) = 0.
\end{equation}
For fixed $a,b > 0$, the set $\{\uJacobi^{(a,b)}_{n}(x)\}_{n=0}^\infty$ forms an orthogonal system on $[0,1]$ with respect to the weight function $x^{a-1} (1-x)^{b-1}$.  More precisely,
\begin{equation}\label{eq_R_orthogonality}
	\int_{0}^1 \uJacobi^{(a,b)}_{n}(x) \uJacobi^{(a,b)}_{m}(x) x^{a-1}(1-x)^{b-1} dx = \delta_{n,m} \uJacLength_n^{(a,b)},
\end{equation}
where $\delta_{n,m}$ denotes the Kronecker delta and 
\begin{equation}
\uJacLength_n^{(a,b)}=\frac{\Gamma(n+a)\Gamma(n+b)}{(2n+a+b-1)\Gamma(n+a+b-1) \Gamma(n+1)}.
\label{eq_cnab}
\end{equation}
Note that $\{\uJacobi^{(a,b)}_{n}(x)\}_{n=0}^\infty$ form a complete basis of the Hilbert space $L^2([0,1],x^{a-1}(1-x)^{b-1})$.

For $n\geq 1$, the modified Jacobi polynomial $\uJacobi^{(a,b)}_{n}(x)$ satisfies the recurrence relation
\begin{equation}\label{eq:R_recurrence}
	\begin{split}
		x \uJacobi^{(a,b)}_{n}(x) =\, & \frac{(n + a - 1)(n + b - 1) }{(2 n + a + b - 1) (2 n + a + b - 2)} \uJacobi^{(a,b)}_{n-1}(x)\\
			& + \left[\frac{1}{2} - \frac{b^2 - a^2 - 2 (b - a)}{2 (2 n + a + b) (2 n + a + b - 2)} \right] \uJacobi^{(a,b)}_{n}(x)\\
			& + \frac{(n+1) (n + a + b - 1)}{(2 n + a + b) (2 n + a + b - 1)} \uJacobi^{(a,b)}_{n+1}(x),
	\end{split}
\end{equation}
while, for $n=0$,
\begin{equation}\label{eq:R_recurrence_initial}
	x \uJacobi^{(a,b)}_{0}(x) =  \frac{a}{a+b} \uJacobi^{(a,b)}_{0}(x) + \frac{1}{a+b} \uJacobi^{(a,b)}_{1}(x).
\end{equation}
Also, note that $\uJacobi^{(a,b)}_{0}(x) \equiv 1$. These recurrence relations play an important role in the work of \citet{Song2012}, and the multivariate analogues, discussed later in \sref{sec:recurrence}, are similarly important for the present work.

The modified Jacobi polynomials satisfy other interesting relations, one of them being the following:
\begin{equation}\label{eq_increase_beta}
	\begin{split}
		\uJacobi^{(a,b)}_n(x) = \frac{n + a + b -1}{2n + a + b-1} \uJacobi^{(a,b+1)}_n(x) - \1_{\{n>0\}}\frac{n + a - 1}{2n + a + b-1} \uJacobi^{(a,b+1)}_{n-1}(x).\\
	\end{split}
\end{equation}
Using this identity, polynomials with parameter $b$ can be related to polynomials with parameter $b+1$.  We utilize this relation later.

\subsection{A review of the $\numAlleles=2$ case}
\label{sec:review_K2}

To motivate the  approach to be employed in the general case, we briefly review the work of \citet{Song2012} for deriving the transition density function in the diallelic  ($\numAlleles=2$) case.
The vector of mutation rates is given by $\mutVec = (\alpha,\beta)$, while the symmetric matrix describing the general diploid selection scheme can be parametrized as
\begin{equation}
	\selMatrix=\left(
	\begin{array}{cc}
		2 \sigma & 2 \sigma h \\
		2 \sigma h & 0
	\end{array}\right),
\end{equation}
where $\sigma$ is the selection strength and $h$ the dominance parameter.  
For $\numAlleles=2$, the diffusion is one dimensional and the simplex $\simplex_1$ is equal to the unit interval $[0,1]$.
With $x$ denoting $\xCmp_1$, the generator~\eqref{eq_backward_generator} reduces to
\begin{equation}
	\genFull f(x) = \frac{1}{2}x(1-x) \frac{\partial^2}{\partial x^2} f(x) + \Big\{ \frac{1}{2}[\alpha - (\alpha + \beta) x] +2\sigma x(1-x)[x + h (1-2x)] \Big\} \frac{\partial}{\partial x} f(x).
\end{equation}

In the neutral case (i.e., $\sigma = 0$), the modified Jacobi polynomials $\uJacobi^{(\alpha,\beta)}_n(x)$ are eigenfunctions of the diffusion generator with eigenvalues $\lambda_n^{(\alpha,\beta)} = \frac{1}{2}n(n - 1 +\alpha + \beta)$.  Hence, a spectral representation of the transition density function 
can be readily obtained via \eqref{eq_tdf}. 

In the non-neutral case (i.e., $\sigma \neq 0$), consider the functions $\modJacobi_n(x) = e^{-\meanFitness(x)/2}\uJacobi^{(\alpha,\beta)}_n(x)$, which form an orthogonal basis of the Hilbert space $L^2([0,1],e^{\meanFitness(x)}x^{\alpha-1}(1-x)^{\beta-1})$, where $e^{\meanFitness(x)}x^{\alpha-1}(1-x)^{\beta-1}$ corresponds to the stationary distribution of the non-neutral diffusion, up to a multiplicative constant. Since the eigenfunctions $\eFun_n(x)$ of the diffusion generator are elements of this Hilbert space, we can pose an expansion $\eFun_n(x) = \sum_{m=0}^\infty w_{n,m} \modJacobi_m(x)$ in terms of the basis functions $\modJacobi_n(x)$, where $w_{n,m}$ are to be determined.  Then, the eigenvalue equation $\genFull \eFun_n(x) = - \eVal_n \eFun_n(x)$ implies the algebraic equation
\begin{equation}
\sum_{m=0}^\infty w_{n,m} \left[ \lambda_m^{(\alpha,\beta)}  + Q(x;\alpha,\beta,\sigma,h)\right] \uJacobi^{(\alpha,\beta)}_m(x) = \eVal_n \sum_{m=0}^\infty w_{n,m} \uJacobi^{(\alpha,\beta)}_m(x),
\end{equation}
where $Q(x;\alpha,\beta,\sigma,h)$  is a polynomial in $x$ of degree four.  Utilizing the recurrence relations in \eqref{eq:R_recurrence} and \eqref{eq:R_recurrence_initial}, one can then arrive at a linear system $M \bfw_n = \Lambda_n \bfw_n$, where $\bfw_n=(w_{n,0}, w_{n,1}, w_{n,2},\ldots)$ is an infinite-dimensional vector of variables and $M$ is a sparse infinite-dimensional matrix with entries that depend on the index $n$ and the parameters $\alpha,\beta,\sigma,h$ of the model.  The infinite linear system $M \bfw_n = \Lambda_n \bfw_n$ is approximated by a finite-dimensional truncated linear system
\[
M^\supN \bfw^\supN_n = \Lambda^\supN_n \bfw^\supN_n,
\]
where $\bfw_n^{(\cutoff)}=(w_{n,0}^\supN,w_{n,1}^\supN,\ldots,w_{n,\cutoff-1}^\supN)$ and $M^\supN$ is the submatrix of $M$ consisting of its first $\cutoff$ rows and $\cutoff$ columns.  This finite-dimensional linear system can be easily solved using standard linear algebra to obtain the eigenvalues $\eVal_n^\supN$ and the eigenvectors $\bfw_n^\supN$ of $M^\supN$.
\citeauthor{Song2012} observed that $\Lambda_n^\supN$ and $w_{n,m}^\supN $ converge very rapidly as the truncation level  $\cutoff$ increases.
Finally, the coefficients $w_{n,m}^\supN$ can be used to approximate the eigenfunctions $\eFun_n(x)$, and, together with the eigenvalues $\eVal_n^\supN$, an efficient approximation of the transition density function can be obtained via \eqref{eq_tdf}. 

\section{The Neutral Case with an Arbitrary Number of Alleles}
\label{sec_neutral_case}

In this section, we describe the spectral representation of the transition density of a neutral PIM model with an arbitrary number $\numAlleles$ of alleles. As in the case of $\numAlleles=2$, reviewed in \sref{sec:review_K2}, for an arbitrary $\numAlleles$ the eigenfunctions in the neutral case can be used to construct the eigenfunctions in the case with selection.   The latter case is considered in \sref{sec:selected_case}.

\subsection{Multivariate Jacobi polynomials}
\label{subsec_multiJacob}

In what follows, let $\N_0 = \{0,1,2,\ldots\}$ denote the set of non-negative integers.
As in  \citet{Griffiths2011}, we define the following system of multivariate orthogonal polynomials in $\numAlleles-1$ variables:
\begin{definition}\label{def_multi_jacobi}
For each vector $\nIdx=(\nIdxCmp_1,\ldots,\nIdxCmp_{\numAlleles-1}) \in \N_0^{\numAlleles-1}$ and $\mutVec=(\mutCmp_1,\ldots,\mutCmp_\numAlleles)\in \R_{\geq 0}^\numAlleles$, the orthogonal polynomial $\jacobi^\mutVec_\nIdx(\xVec)$ is defined as
\begin{equation}\label{eq_def_polynomials}
	\jacobi^\mutVec_\nIdx(\xVec) =  \prod_{j=1}^{\numAlleles-1} \left[ \bigg(1-\frac{\xCmp_j}{1-\sum_{i=1}^{j-1}\xCmp_i}\bigg)^{\nTot_j} \uJacobi_{\nIdxCmp_j}^{(\mutCmp_j,\mutTot_j + 2\nTot_j)}\bigg(\frac{\xCmp_j}{1-\sum_{i=1}^{j-1}\xCmp_i}\bigg)  \right],
\end{equation}
where $\nTot_j = \sum^{\numAlleles-1}_{i=j+1} \nIdxCmp_i$ and $\mutTot_j = \sum^{\numAlleles}_{i=j+1} \mutCmp_i$.
\end{definition}

For $\xVec=(\xCmp_1,\ldots,\xCmp_{\numAlleles-1})\in \simplex_{\numAlleles-1}$, let $\statNeutr(\xVec)$ denote an unnormalized density of the  Dirichlet distribution with parameter~$\mutVec=(\mutCmp_1,\ldots,\mutCmp_\numAlleles)$:
\begin{equation}
\statNeutr(\xVec) = \prod_{i=1}^K \xCmp_i^{\mutCmp_i-1},
\label{eq:Pi0}
\end{equation}
where $\xCmp_K=1-|\xVec|$.
The following lemma, a proof of which is provided in Appendix~\ref{app_proofs}, states that the above multivariate Jacobi polynomials $\jacobi^\mutVec_\nIdx(\xVec)$ are orthogonal with respect to $\statNeutr(\xVec)$:

\begin{lemma}\label{lem_orthogonality}
For $\nIdx,\mIdx \in \N_0^{\numAlleles-1}$,
\begin{equation}
	\begin{split}
		\int_{\simplex_{\numAlleles-1}}  \jacobi^\mutVec_\nIdx (\xVec) \jacobi^\mutVec_\mIdx (\xVec) \statNeutr(\xVec) d\xVec & = \delta_{\nIdx,\mIdx} \jacLength^\mutVec_\nIdx,
	\end{split}
\end{equation}
where $\delta_{\nIdx,\mIdx}=\prod_{i=1}^{\numAlleles-1} \delta_{\nIdxCmp_i,\mIdxCmp_i}$ and
\begin{equation}
	\jacLength^\mutVec_\nIdx := \prod_{i=1}^{\numAlleles-1} \uJacLength_{\nIdxCmp_i}^{(\mutCmp_i,\mutTot_i + 2\nTot_i)},
\label{eq_const}
\end{equation}
with $\uJacLength_{n}^{(a,b)}$ defined in \eqref{eq_cnab}.
\end{lemma}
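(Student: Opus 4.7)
The plan is to reduce the simplex integral to a product of univariate integrals on $[0,1]$ via a stick-breaking change of variables, after which the univariate orthogonality \eqref{eq_R_orthogonality} can be applied coordinate by coordinate. I would introduce new variables $z_j = \xCmp_j\big/\bigl(1 - \sum_{i=1}^{j-1}\xCmp_i\bigr)$ for $j=1,\ldots,\numAlleles-1$, which map $\simplex_{\numAlleles-1}$ bijectively onto $[0,1]^{\numAlleles-1}$ with inverse $\xCmp_j = z_j \prod_{i<j}(1-z_i)$ and $\xCmp_\numAlleles = \prod_{i=1}^{\numAlleles-1}(1-z_i)$. The matrix $\partial \xCmp_j/\partial z_i$ is lower triangular with diagonal entries $\prod_{i<j}(1-z_i)$, so its determinant equals $\prod_{i=1}^{\numAlleles-2}(1-z_i)^{\numAlleles-1-i}$. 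Substituting this Jacobian into $\statNeutr(\xVec) = \prod_{i=1}^\numAlleles \xCmp_i^{\mutCmp_i - 1}$ and collecting exponents, I expect the transformed measure to collapse into the clean product
\[
\statNeutr(\xVec)\, d\xVec \;=\; \prod_{j=1}^{\numAlleles-1} z_j^{\mutCmp_j - 1}(1-z_j)^{\mutTot_j - 1}\, dz_j,
\]
i.e.\ a product of independent Beta weights in the new coordinates.

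By direct inspection of \dref{def_multi_jacobi}, the polynomial $\jacobi^\mutVec_\nIdx$ is literally the product $\prod_{j=1}^{\numAlleles-1}(1-z_j)^{\nTot_j}\uJacobi^{(\mutCmp_j, \mutTot_j + 2\nTot_j)}_{\nIdxCmp_j}(z_j)$ in these coordinates, so the inner product integral factorizes as $\prod_{j=1}^{\numAlleles-1} I_j$ with
\[
I_j = \int_0^1 \uJacobi^{(\mutCmp_j, \mutTot_j + 2\nTot_j)}_{\nIdxCmp_j}(z_j)\, \uJacobi^{(\mutCmp_j, \mutTot_j + 2\mTot_j)}_{\mIdxCmp_j}(z_j)\, z_j^{\mutCmp_j - 1}(1-z_j)^{\mutTot_j + \nTot_j + \mTot_j - 1}\, dz_j,
\]
where $\mTot_j = \sum_{i=j+1}^{\numAlleles-1}\mIdxCmp_i$. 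I would then close the argument by backward induction on $j$, from $j = \numAlleles-1$ down to $j = 1$. At $j = \numAlleles - 1$, both $\nTot_{\numAlleles-1}$ and $\mTot_{\numAlleles-1}$ vanish, so both Jacobi polynomials in $I_{\numAlleles-1}$ share the parameters $(\mutCmp_{\numAlleles-1}, \mutTot_{\numAlleles-1})$ and the weight is exactly their Jacobi weight; hence \eqref{eq_R_orthogonality} gives $I_{\numAlleles-1} = \delta_{\nIdxCmp_{\numAlleles-1}, \mIdxCmp_{\numAlleles-1}}\, \uJacLength^{(\mutCmp_{\numAlleles-1}, \mutTot_{\numAlleles-1})}_{\nIdxCmp_{\numAlleles-1}}$. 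If this Kronecker delta vanishes, the whole product is zero and matches the claim; otherwise $\nIdxCmp_{\numAlleles-1} = \mIdxCmp_{\numAlleles-1}$ forces $\nTot_{\numAlleles-2} = \mTot_{\numAlleles-2}$, and the same argument applies to $I_{\numAlleles-2}$. Iterating down to $j=1$ produces $\prod_{j=1}^{\numAlleles-1}\delta_{\nIdxCmp_j,\mIdxCmp_j}\, \uJacLength^{(\mutCmp_j, \mutTot_j + 2\nTot_j)}_{\nIdxCmp_j}$, which is exactly $\delta_{\nIdx,\mIdx}\, \jacLength^\mutVec_\nIdx$ as defined in \eqref{eq_const}.

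The main obstacle is the algebraic bookkeeping in the change-of-variables step: tracking how the $(1-z_k)$ contributions from $\xCmp_\numAlleles^{\mutCmp_\numAlleles - 1}$, from $\xCmp_i^{\mutCmp_i - 1}$ for $k < i \leq \numAlleles - 1$, and from the Jacobian all collapse into the single clean exponent $\mutTot_k - 1$. Once the factorized Beta form of the measure is in hand, the rest is essentially mechanical, since the inductive structure means that a mismatch at an outer index already annihilates the entire product, and one never has to analyze integrals of two Jacobi polynomials with genuinely different $b$-parameters.
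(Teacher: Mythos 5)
Your proposal is correct and follows essentially the same route as the paper's proof: the same stick-breaking change of variables to $[0,1]^{\numAlleles-1}$, the same collapse of $\statNeutr(\xVec)\,d\xVec$ into a product of Beta weights, and the same factorization into univariate integrals, with the final step (your backward induction from $j=\numAlleles-1$) being just a reorganization of the paper's argument that the outermost index at which $\nIdx$ and $\mIdx$ differ already forces the corresponding factor to vanish by univariate Jacobi orthogonality at shared parameters.
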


\medskip\noindent\emph{Remark:}
	The multivariate Jacobi polynomials form a complete basis of $L^2\big(\simplex_{\numAlleles-1}, \statNeutr(\xVec)\big)$, the Hilbert space of functions on $\simplex_{\numAlleles-1}$ square integrable with respect to the unnormalized Dirichlet density $\statNeutr(\xVec)$.

\subsection{Recurrence relation for multivariate Jacobi polynomials}
\label{sec:recurrence}
Recall that the univariate Jacobi polynomials satisfy the recurrence relation \eqref{eq:R_recurrence}.
Theorem~3.2.1 of \citet{Dunkl2001} guarantees that the multivariate Jacobi polynomials satisfy a similar recurrence relation. 
More precisely, we have the following lemma, the proof of which is provided in Appendix~\ref{app_proofs}:
\begin{lemma}
\label{lem_mult_recurrence}
Given $\nIdx=(\nIdxCmp_1,\ldots,\nIdxCmp_{\numAlleles-1})\in\N_0^{\numAlleles-1}$ and $\mIdx=(\mIdxCmp_1,\ldots,\mIdxCmp_{\numAlleles-1})\in\N_0^{\numAlleles-1}$, define $\nTot_j = \sum^{\numAlleles-1}_{i=j+1} \nIdxCmp_i$ and  $\mTot_j = \sum^{\numAlleles-1}_{i=j+1} \mIdxCmp_i$.
For given $i\in \{1,\ldots,\numAlleles-1\}$ and $\nIdx$,  $\jacobi^\mutVec_\nIdx (\xVec)$ satisfies the recurrence relation 
\begin{equation}
	\xCmp_i \jacobi^\mutVec_\nIdx (\xVec) = \sum_{\mIdx \in \M_i(\nIdx)} r^{(\mutVec,i)}_{\nIdx,\mIdx} \jacobi^\mutVec_\mIdx (\xVec),
\label{eq_multiJacobi_recurrence}
\end{equation}
where $r^{(\mutVec,i)}_{\nIdx,\mIdx}$ are known constants (provided in Appendix~\ref{app_proofs}) and 
\begin{equation}
	\M_i (\nIdx) := \Big\{ \mIdx \in \N_0^{\numAlleles-1} : \mTot_j = \nTot_j \text{ for all } j>i \text{ and } | \mTot_j - \nTot_j| \leq 1 \text{ for all } j \leq i\Big\}.
\label{eq_index_set}
\end{equation}
\end{lemma}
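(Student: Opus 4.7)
The plan is to reduce the multivariate recurrence to a product of univariate identities via the stick-breaking change of variables $y_j := \xCmp_j/\bigl(1-\sum_{k<j}\xCmp_k\bigr)$ for $j=1,\ldots,\numAlleles-1$, which is a bijection from $\simplex_{\numAlleles-1}$ onto $[0,1]^{\numAlleles-1}$. Under this map, \dref{def_multi_jacobi} factorizes as
$$\jacobi^\mutVec_\nIdx(\xVec) = \prod_{j=1}^{\numAlleles-1}\bigl[(1-y_j)^{\nTot_j}\uJacobi^{(\mutCmp_j,\mutTot_j+2\nTot_j)}_{\nIdxCmp_j}(y_j)\bigr],$$
and a short induction yields $\xCmp_k = y_k\prod_{\ell<k}(1-y_\ell)$; in particular $\xCmp_i = y_i\prod_{k<i}(1-y_k)$. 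The product $\xCmp_i\jacobi^\mutVec_\nIdx$ therefore factorizes over coordinates: the factor at coordinate $j$ is unchanged for $j>i$, becomes $y_i(1-y_i)^{\nTot_i}\uJacobi^{(\mutCmp_i,\mutTot_i+2\nTot_i)}_{\nIdxCmp_i}(y_i)$ for $j=i$, and picks up an extra $(1-y_j)$ for $j<i$.

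The next step is to re-express each coordinate factor as a linear combination of ``$\jacobi^\mutVec_\mIdx$-type'' building blocks $(1-y_j)^{\mTot_j}\uJacobi^{(\mutCmp_j,\mutTot_j+2\mTot_j)}_{\mIdxCmp_j}(y_j)$, processed from $j=\numAlleles-1$ down to $j=1$ so that the inherited value $\mTot_j := \sum_{k>j}\mIdxCmp_k$ is already determined by the choices made at higher coordinates when we reach coordinate $j$. For $j>i$ nothing needs to be done, forcing $\mIdxCmp_j=\nIdxCmp_j$ and thereby $\mTot_j=\nTot_j$ for all $j\geq i$. At $j=i$, the univariate recurrence \eqref{eq:R_recurrence}--\eqref{eq:R_recurrence_initial} expresses $y_i\uJacobi^{(\mutCmp_i,\mutTot_i+2\nTot_i)}_{\nIdxCmp_i}$ as a combination of $\uJacobi^{(\mutCmp_i,\mutTot_i+2\nTot_i)}_{\mIdxCmp_i}$ with $\mIdxCmp_i\in\{\nIdxCmp_i-1,\nIdxCmp_i,\nIdxCmp_i+1\}$, producing at most a unit shift in the running offset $\mTot_{i-1}-\nTot_{i-1}$. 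For each $j<i$, the factor $(1-y_j)^{\nTot_j+1}\uJacobi^{(\mutCmp_j,\mutTot_j+2\nTot_j)}_{\nIdxCmp_j}$ must be rewritten in the form $(1-y_j)^{\mTot_j}\uJacobi^{(\mutCmp_j,\mutTot_j+2\mTot_j)}_{\mIdxCmp_j}$ where $\mTot_j$ is the inherited value; since the $b$-parameter of the Jacobi polynomial shifts by $2(\mTot_j-\nTot_j)\in\{-2,0,+2\}$, this is accomplished by combining the parameter-shift identity \eqref{eq_increase_beta} (or its appropriate inverse) with the univariate recurrence so as to absorb the extra $(1-y_j)$ factor.

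Taking the tensor product of these coordinate expansions and collecting like terms yields the claimed recurrence $\xCmp_i\jacobi^\mutVec_\nIdx=\sum_\mIdx r^{(\mutVec,i)}_{\nIdx,\mIdx}\jacobi^\mutVec_\mIdx$, in which each coefficient is a product of the univariate coefficients produced at each coordinate. By construction, the only $\mIdx$ that can contribute are those satisfying $\mTot_j=\nTot_j$ for all $j>i$ and $|\mTot_j-\nTot_j|\leq 1$ for all $j\leq i$, which is exactly the index set $\M_i(\nIdx)$ in \eqref{eq_index_set}.

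The main obstacle will be the $j<i$ step, particularly setting up the ``inverse'' of \eqref{eq_increase_beta} (which expresses $\uJacobi^{(a,b+1)}_m$ in the $\{\uJacobi^{(a,b)}_k\}$ basis) for the case $\mTot_j=\nTot_j-1$, and inductively tracking the running offset $\mTot_j-\nTot_j$ to verify the constraint $|\mTot_j-\nTot_j|\leq 1$. A cleaner alternative is to invoke \citet[Theorem~3.2.1]{Dunkl2001}, which guarantees the abstract existence of such a three-term--type recurrence, and then to read off the explicit coefficients via
$$r^{(\mutVec,i)}_{\nIdx,\mIdx} = \frac{1}{\jacLength^\mutVec_\mIdx}\int_{\simplex_{\numAlleles-1}}\xCmp_i\,\jacobi^\mutVec_\nIdx(\xVec)\,\jacobi^\mutVec_\mIdx(\xVec)\,\statNeutr(\xVec)\,d\xVec$$
using \lemmaref{lem_orthogonality}; in stick-breaking coordinates this integral factorizes into univariate pieces that can be handled directly by \eqref{eq_R_orthogonality} and \eqref{eq_increase_beta}, which again localizes the support to $\M_i(\nIdx)$.
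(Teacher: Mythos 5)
Your proposal is correct and is essentially the paper's own argument: the paper proves the lemma precisely by computing the projection coefficients $\frac{1}{\jacLength^\mutVec_\mIdx}\int_{\simplex_{\numAlleles-1}}\xCmp_i\,\jacobi^\mutVec_\nIdx\,\jacobi^\mutVec_\mIdx\,\statNeutr\,d\xVec$ in the stick-breaking coordinates, factorizing them into univariate integrals handled by the recurrence \eqref{eq:R_recurrence} at coordinate $i$ and by \eqref{eq_increase_beta} (applied to whichever factor avoids inverting it) in the three cases $\mTot_j-\nTot_j\in\{-1,0,+1\}$ for $j<i$, followed by the same induction on the running offset to confine the support to $\M_i(\nIdx)$. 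Your ``cleaner alternative'' is literally the paper's proof, and your primary tensor-product expansion is the same computation phrased as a basis expansion rather than a projection.
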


Impose an ordering on the $(\numAlleles-1)$-dimensional index vectors $\bfn\in \N_0^{\numAlleles-1}$.  Then, the recurrence~\eqref{eq_multiJacobi_recurrence} can be represented as
\begin{equation}
  	\xCmp_i \jacobi^\mutVec_\nIdx (\xVec) = \sum_{\mIdx \in \M_i(\nIdx)}  [\polyMatrix^\mutVec_i]_{\nIdx,\mIdx} \jacobi^\mutVec_\mIdx (\xVec),
\end{equation}
where $\polyMatrix^\mutVec_i$ corresponds to an infinite dimensional matrix in which columns and rows are indexed by the ordered $(\numAlleles-1)$-tuples, and the \mbox{$(\nIdx,\mIdx)$-th} entry is defined as
\begin{equation}
[\polyMatrix^\mutVec_i]_{\nIdx,\mIdx}=\begin{cases}
r^{(\mutVec,i)}_{\nIdx,\mIdx}, & \text{if $\mIdx\in\M_i(\nIdx)$},\\
0, & \text{otherwise}.
\end{cases}  
\label{eq:Gi}
\end{equation}
Note that for each given $\nIdx$, the number of non-zero entries in every row of $\polyMatrix^\mutVec_i$ is finite.
One can deduce the following corollary from the new representation:
\begin{corollary}
\label{cor_polyMatrix}
Let $\bfa=(a_\nIdx)_{\nIdx\in\N_0^{\numAlleles-1}}$ be such that $\sum_{\nIdx\in\N_0^{\numAlleles-1}} a_\nIdx^2 \jacLength^\mutVec_\nIdx < \infty$.
Then
\begin{equation}
\xCmp_i \cdot \sum_{\nIdx\in\N_0^{\numAlleles-1}} a_\nIdx \jacobi_\nIdx^{\mutVec}(\xVec) = \sum_{\nIdx\in\N_0^{\numAlleles-1}} b_\nIdx \jacobi_\nIdx^{\mutVec}(\xVec), 
\end{equation}
where  $(b_\nIdx)_{\nIdx\in\N_0^{\numAlleles-1}}=\bfa\cdot\polyMatrix^\mutVec_i$.
\end{corollary}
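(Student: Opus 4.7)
The plan is to view the statement as a bounded-operator computation on $L^2\bigl(\simplex_{\numAlleles-1},\statNeutr(\xVec)\bigr)$. First I would observe that the hypothesis $\sum_\nIdx a_\nIdx^2 \jacLength_\nIdx^\mutVec < \infty$ is precisely Parseval's condition for the orthogonal system $\{\jacobi_\nIdx^\mutVec\}$ (Lemma~\ref{lem_orthogonality} and the completeness remark that follows it), so the series $f(\xVec):=\sum_\nIdx a_\nIdx \jacobi_\nIdx^\mutVec(\xVec)$ converges in $L^2\bigl(\simplex_{\numAlleles-1},\statNeutr\bigr)$. Since $0\leq \xCmp_i \leq 1$ on $\simplex_{\numAlleles-1}$, multiplication by $\xCmp_i$ is a bounded self-adjoint operator on this Hilbert space with operator norm at most $1$; in particular $\xCmp_i f$ lies in the space and admits its own Jacobi expansion $\xCmp_i f = \sum_\mIdx b_\mIdx \jacobi_\mIdx^\mutVec$ with Fourier coefficients $b_\mIdx = \langle \xCmp_i f, \jacobi_\mIdx^\mutVec\rangle_{\statNeutr}/\jacLength_\mIdx^\mutVec$.

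Next I would identify each $b_\mIdx$ by approximation with finite partial sums. Write $f_N := \sum_{|\nIdx|\leq N} a_\nIdx \jacobi_\nIdx^\mutVec$, so that $f_N \to f$ and hence $\xCmp_i f_N \to \xCmp_i f$ in $L^2$. Continuity of the inner product gives
\begin{equation}
	b_\mIdx \, \jacLength_\mIdx^\mutVec = \langle \xCmp_i f, \jacobi_\mIdx^\mutVec\rangle_{\statNeutr} = \lim_{N\to\infty} \sum_{|\nIdx|\leq N} a_\nIdx \,\langle \xCmp_i \jacobi_\nIdx^\mutVec, \jacobi_\mIdx^\mutVec\rangle_{\statNeutr}.
\end{equation}
Applying the recurrence relation of Lemma~\ref{lem_mult_recurrence} to $\xCmp_i\jacobi_\nIdx^\mutVec$ and then using orthogonality of the multivariate Jacobi polynomials collapses each inner product to a single term, yielding $\langle \xCmp_i \jacobi_\nIdx^\mutVec, \jacobi_\mIdx^\mutVec\rangle_{\statNeutr} = [\polyMatrix_i^\mutVec]_{\nIdx,\mIdx}\,\jacLength_\mIdx^\mutVec$. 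Dividing by $\jacLength_\mIdx^\mutVec$ produces $b_\mIdx = \lim_N \sum_{|\nIdx|\leq N} a_\nIdx [\polyMatrix_i^\mutVec]_{\nIdx,\mIdx}$, which is precisely the $\mIdx$-th entry of $\bfa\cdot \polyMatrix_i^\mutVec$.

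To close the argument one should check that this last sum is actually well-defined. The key structural observation is that the membership condition $\mIdx \in \M_i(\nIdx)$ defining the support of $\polyMatrix_i^\mutVec$ is symmetric in $\nIdx$ and $\mIdx$: both $\mTot_j=\nTot_j$ for $j>i$ and $|\mTot_j-\nTot_j|\leq 1$ for $j\leq i$ are symmetric constraints. Consequently, for a fixed $\mIdx$ the set of $\nIdx$ with $[\polyMatrix_i^\mutVec]_{\nIdx,\mIdx}\neq 0$ equals $\M_i(\mIdx)$, which is finite. So the series defining $b_\mIdx$ is in fact a finite sum, eliminating any concern about summation order.

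The main subtlety, in my view, is not the recurrence manipulation itself but rather justifying the passage to the limit: one must resist the temptation to interchange the infinite sum over $\nIdx$ with the integral directly. Handling this through $L^2$ convergence of the partial sums $f_N$ combined with continuity of $\langle \cdot,\jacobi_\mIdx^\mutVec\rangle_{\statNeutr}$ is the cleanest route, and the finite-support-per-column property of $\polyMatrix_i^\mutVec$ makes the identification of the limit with $(\bfa\cdot\polyMatrix_i^\mutVec)_\mIdx$ immediate.
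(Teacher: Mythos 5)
Your proposal is correct and follows the route the paper intends: the paper states this corollary as an immediate deduction from the matrix form of the recurrence in Lemma~\ref{lem_mult_recurrence} and offers no separate proof, the content being exactly the term-by-term application of \eqref{eq_multiJacobi_recurrence} followed by collecting coefficients of each $\jacobi_\mIdx^\mutVec$. Your additional care --- using $L^2\big(\simplex_{\numAlleles-1},\statNeutr\big)$ convergence of the partial sums, boundedness of multiplication by $\xCmp_i$, the identity $\langle \xCmp_i \jacobi_\nIdx^\mutVec,\jacobi_\mIdx^\mutVec\rangle_{\statNeutr}=[\polyMatrix_i^\mutVec]_{\nIdx,\mIdx}\jacLength_\mIdx^\mutVec$, and the finitely many non-zero entries per column to justify the rearrangement --- is sound and simply makes explicit what the paper leaves implicit.
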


\medskip\noindent\emph{Remark:}	Since under multiplication $x_i$ commutes with $x_j$ for $1 \leq i,j \leq \numAlleles-1$, the corresponding matrices $\polyMatrix^\mutVec_i$ and $\polyMatrix^\mutVec_j$ also commute.

\subsection{Eigenfunctions of the neutral generator $\genNeutral$} 
It is well known that the stationary distribution of the Wright-Fisher diffusion under a \textit{neutral} PIM model is the Dirichlet distribution \citep{Wright1949}.
The density of the Dirichlet distribution is a weight function with respect to which the associated diffusion generator $\genNeutral$ is symmetric.
As discussed in \sref{subsec_multiJacob}, the multivariate Jacobi polynomials $\jacobi_\nIdx^{\mutVec}(\xVec)$ are orthogonal with respect to the weight function $\statNeutr(\xVec)$, which is equal to the density of the Dirichlet distribution up to a multiplicative constant.  Given the discussion in \sref{sec:background_spec_rep}, one might then suspect that $\jacobi_\nIdx^{\mutVec}(\xVec)$ are potential eigenfunctions of $\genNeutral$. The following lemma establishes that this is indeed the case:

\begin{lemma}
\label{lem_eigen_neutral}
For all $\nIdx\in\N_0^{\numAlleles-1}$, the multivariate Jacobi polynomials $\jacobi_\nIdx^{\mutVec}(\xVec)$ satisfy
\begin{equation}
	\genNeutral \jacobi_\nIdx^{\mutVec}(\xVec) = -\eVecNeutr^\mutVec_{\vert \nIdx \vert} \jacobi_{\nIdx}^{\mutVec}(\xVec),
\end{equation}
where
\begin{equation}
	\eVecNeutr^\mutVec_{\vert \nIdx \vert}=\frac{1}{2}|\nIdx|(|\nIdx|-1+|\mutVec|).
	\label{eq:neutral_eVal}
\end{equation}
That is, $\jacobi_\nIdx^{\mutVec}(\xVec)$ are eigenfunctions of $\genNeutral$ with eigenvalues  $-\eVecNeutr^\mutVec_{\vert \nIdx \vert}$.
\end{lemma}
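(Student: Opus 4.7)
My plan is to avoid any term-by-term differentiation of the stick-breaking formula \eqref{eq_def_polynomials} and instead combine three ingredients: (i) the total polynomial degree of $\jacobi^\mutVec_\nIdx$; (ii) the preservation of the filtration by degree under $\genNeutral$ together with its very simple action on the top graded piece; and (iii) the symmetry of $\genNeutral$ with respect to $\statNeutr$ recorded in \sref{sec:background_spec_rep}. The content of the lemma---that the eigenvalue depends only on $|\nIdx|$---will then fall out from a short dimension count, with no need to track how the rational factors $(1-z_j)^{\nTot_j}$ interact with the univariate Jacobi polynomials under differentiation.

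The first step is to verify that $\jacobi^\mutVec_\nIdx(\xVec)$ really is a polynomial in $\xCmp_1,\ldots,\xCmp_{\numAlleles-1}$ of total degree $|\nIdx|$. Although \eqref{eq_def_polynomials} uses the rational stick-breaking variables $z_j = \xCmp_j/(1-\sum_{i<j}\xCmp_i)$, the maximum denominator $(1-\sum_{i<j}\xCmp_i)^{\nTot_{j-1}}$ produced by the $j$-th factor is exactly matched by the numerator $(1-\sum_{i\leq j-1}\xCmp_i)^{\nTot_{j-1}}$ arising from $(1-z_{j-1})^{\nTot_{j-1}}$ in the $(j-1)$-st factor; telescoping this cancellation across $j$ leaves an honest polynomial in $\xVec$ of total degree $\nIdxCmp_1 + \nTot_1 = |\nIdx|$, as in the Dunkl--Xu construction of orthogonal polynomials on the simplex.

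The second step is purely algebraic. Because $\diffTerm_{i,j}(\xVec)$ is polynomial of degree at most two and $\driftTerm_i(\xVec)$ of degree at most one, the operator $\genNeutral$ preserves the space $P_n$ of polynomials in $\numAlleles-1$ variables of total degree $\leq n$, and its degree-$n$ piece equals $-\tfrac12\sum_{i,j=1}^{\numAlleles-1}\xCmp_i\xCmp_j\partial_i\partial_j - \tfrac12|\mutVec|\sum_{i=1}^{\numAlleles-1}\xCmp_i\partial_i$. Applying Euler's relations $\sum_{i=1}^{\numAlleles-1}\xCmp_i\partial_i f = nf$ and $\sum_{i,j=1}^{\numAlleles-1}\xCmp_i\xCmp_j\partial_i\partial_j f = n(n-1)f$ to any homogeneous $f$ of degree $n$ then shows that the induced action on $P_n/P_{n-1}$ is multiplication by $-\tfrac12 n(n-1+|\mutVec|) = -\eVecNeutr^\mutVec_n$. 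Since $\genNeutral$ is symmetric with respect to $\statNeutr$, it also preserves $V_n := P_n \cap P_{n-1}^\perp$ inside $L^2(\simplex_{\numAlleles-1},\statNeutr)$, on which it therefore acts as the scalar $-\eVecNeutr^\mutVec_n$. Finally, Lemma~\ref{lem_orthogonality} (whose orthogonality constants $\jacLength^\mutVec_\nIdx$ are strictly positive for $\mutVec\in\R_{>0}^{\numAlleles}$) makes $\{\jacobi^\mutVec_\mIdx : |\mIdx| \leq n-1\}$ a linearly independent subset of $P_{n-1}$ of cardinality $\binom{n+\numAlleles-2}{\numAlleles-1} = \dim P_{n-1}$, hence a basis; thus any $\jacobi^\mutVec_\nIdx$ with $|\nIdx| = n$ is automatically orthogonal to $P_{n-1}$, lies in $V_n$, and is therefore an eigenfunction of $\genNeutral$ with eigenvalue $-\eVecNeutr^\mutVec_n$, as claimed.

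The only non-trivial calculation is Step~1, the polynomial-degree claim; everything after that is a finite-dimensional linear-algebra exercise driven by a symmetry and a degree count. A more direct alternative---pushing $\genNeutral$ through \eqref{eq_def_polynomials} via the chain rule and invoking the one-variable ODE \eqref{eq_R_ode} for each factor---also works, but yields per-factor eigenvalue contributions of the form $\nIdxCmp_j(\nIdxCmp_j + \mutCmp_j + \mutTot_j + 2\nTot_j - 1)$ together with cross-terms that must conspire to collapse to the simple formula $\tfrac12|\nIdx|(|\nIdx|-1+|\mutVec|)$; the filtration argument above makes that conspiracy automatic.
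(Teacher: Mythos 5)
Your proof is correct, but it takes a genuinely different route from the paper's. The paper proves the lemma by brute force in the stick-breaking coordinates of Appendix~B: it applies the transformed (diagonalized) generator $\genNeutral$ to the product form $\jacobiTr_\nIdx^{\mutVec}(\xVecTr)=\prod_j \uJacobi_{\nIdxCmp_j}^{(\mutCmp_j,\mutTot_j+2\nTot_j)}(\xCmpTr_j)(1-\xCmpTr_j)^{\nTot_j}$, invokes the univariate ODE~\eqref{eq_R_ode} factor by factor, and watches a telescoping sum of per-coordinate contributions collapse to $-\frac12|\nIdx|(|\nIdx|-1+|\mutVec|)$ --- exactly the ``conspiracy'' you mention at the end. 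You instead argue abstractly: $\jacobi^\mutVec_\nIdx$ is a polynomial of total degree $|\nIdx|$ (your telescoping cancellation of the rational stick-breaking factors is the right verification), $\genNeutral$ preserves the degree filtration and acts on the top graded piece as $-\frac12 n(n-1+|\mutVec|)$ by Euler's identities, and symmetry with respect to $\statNeutr$ (asserted in \sref{sec:background_spec_rep}) plus the orthogonality and dimension count from Lemma~\ref{lem_orthogonality} force $\jacobi^\mutVec_\nIdx$ into the invariant space $V_n=P_n\cap P_{n-1}^\perp$, where the filtration action is exact. There is no circularity, since Lemma~\ref{lem_orthogonality} and the symmetry claim are established independently of this lemma. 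Your argument buys a conceptual explanation of why the eigenvalue depends only on $|\nIdx|$ (it is forced by the degree alone) and avoids all differentiation of the product formula; the paper's computation buys an explicit, self-contained verification that does not lean on the integration-by-parts symmetry assertion and produces the intermediate identity~\eqref{eq_gen_to_poly} along the way. The only points worth making fully explicit if you wrote this up are (i) that the inner product is positive definite on polynomials so that $P_n=P_{n-1}\oplus V_n$, which requires $\mutVec\in\R_{>0}^{\numAlleles}$ as you note, and (ii) the one-line verification that invariance of $V_n$ together with the quotient action gives $\genNeutral f=-\eVecNeutr^\mutVec_n f$ exactly (the difference lies in $V_n\cap P_{n-1}=\{0\}$).
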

A proof of this lemma is deferred to Appendix~\ref{app_proofs}.  We conclude this section with a few comments.

\medskip\noindent\emph{Remarks:}
\begin{enumerate}[i)]
\item Substituting the eigenvalues and eigenfunctions into the spectral representation~\eqref{eq_tdf}, we obtain 
\begin{equation}\label{eq_tdf_PIM}
p(t;\xVec,\yVec) = \sum_{\nIdx\in\N_0^{\numAlleles-1}} \frac{1}{\jacLength_\nIdx^\mutVec}\, e^{- \eVecNeutr^\mutVec_{\vert \nIdx \vert} t} \jacobi_\nIdx^{\mutVec} (\xVec) \jacobi_\nIdx^{\mutVec} (\yVec) \statNeutr(\yVec).
\end{equation}
\item 
For every $\nIdx\in \N_0^{\numAlleles-1}$, note that $\eVecNeutr^\mutVec_{\vert \nIdx \vert}$ only depends on the norm $|\nIdx|$, which implies degeneracy in the spectrum of $\genNeutral$.
\cite{Griffiths1979} constructed orthogonal kernel polynomials indexed by $|\nIdx|$, that is the sum over all orthogonal polynomials with index summing to $|\nIdx|$, and obtained the transition density expansion~\eqref{eq_tdf_PIM}.
\end{enumerate}

\section{A General Diploid Selection Case with an Arbitrary Number of Alleles}
\label{sec:selected_case}
In this section, we derive the spectral representation of the transition density function of the Wright-Fisher diffusion under a $\numAlleles$-allelic PIM model with general diploid selection.  This work extends the work of \cite{Song2012}, the special case of $\numAlleles=2$ briefly summarized in \sref{sec:review_K2}, to an arbitrary number $\numAlleles$ of alleles.  The recurrence relation presented in Lemma~\ref{lem_mult_recurrence} plays a crucial role in the following derivation.

Recall that the backward generator for the full model is  $\genFull =\genNeutral+\genSel$, where
$\genNeutral$ corresponds to the generator under neutrality and $\genSel$ corresponds to the contribution from selection. The diffusion has a unique stationary density [see  \citet{Ethier1994} or~\citet{Barbour2000}] proportional to
\begin{equation}\label{eq_stat_dens_sel}
	\stat(\xVec):= e^{\meanFitness(\xVec)} \statNeutr(\xVec),
\end{equation}
where $\statNeutr(\xVec)$ is defined in \eqref{eq:Pi0} and  $\meanFitness(\xVec)$ is the mean fitness defined in~\eqref{eq:meanFitness}. As mentioned in \sref{sec:background_spec_rep}, $\genFull $ is symmetric with respect to $\stat(\xVec)$.  For $\eIdx\in\N_0$, we aim to find the eigenvalues $-\eVal_{\eIdx}$ and the eigenfunctions $\eFun_{\eIdx}$ of $\genFull $ such that 
\begin{equation}\label{eq:eigen_L}
	\genFull  \eFun_{\eIdx}(\xVec)=-\eVal_{\eIdx} \eFun_{\eIdx}(\xVec).
\end{equation}
 By convention, we place $\eVal_\eIdx$ in non-decreasing order. The symmetry of $\genFull $ implies that $\{\eFun_{\eIdx}(\xVec)\}$ form an orthogonal system with respect to $\stat(\xVec)$, that is
\begin{equation}
	\int_{\simplex_{\numAlleles-1}}\eFun_{\eIdx}(\xVec)\eFun_{\eIdxM}(\xVec)\stat(\xVec)d\xVec \propto \delta_{\eIdx,\eIdxM}.
\end{equation}
Such a system of orthogonal functions, however, is not unique. The orthogonality of $\{\jacobi_\nIdx^{\mutVec}\}$ with respect to $\statNeutr$, established in Lemma~\ref{lem_orthogonality}, can be used to show that the functions 
\begin{equation}
\modJacobi_\nIdx(\xVec) := \jacobi_{\nIdx}^{\mutVec}(\xVec)e^{-\meanFitness(\bf  \xVec)/2}
\label{eq:modJacobi}
\end{equation}
are orthogonal with respect to $\stat$, as are $\eFun_\eIdx(\xVec)$. Furthermore, the fact that $\{\jacobi_\nIdx^{\mutVec}(\xVec)\}$ form a complete basis of $L^2(\simplex_{\numAlleles-1},\statNeutr(\xVec))$ means that $\{\modJacobi_\nIdx(\xVec)\}$ is a complete basis of $L^2\big(\simplex_{\numAlleles-1},\stat(\bf  x)\big)$. 
Since $B_n \in L^2\big(\simplex_{\numAlleles-1},\stat(\bf  x)\big)$, we thus seek to represent $\eFun_\eIdx(\xVec)$ as linear combination of the basis $\modJacobi_\nIdx(\xVec)$:
\begin{equation}\label{eq_linear_rep}
	\eFun_{\eIdx}(\xVec)=\sum_{\mIdx\in\N_0^{\numAlleles-1}}\uCoeff_{\eIdx,\mIdx}\modJacobi_{\mIdx}(\xVec),
\end{equation}
where $\uCoeff_{\eIdx,\mIdx}$ are some constants to be determined.

Define an index set $\iSet=\cup_{L=0}^{4}\left\{ 1,\dots,\numAlleles-1\right\} ^{L}$ and for $\iIdx=(\iCmp_{1},\dots,\iCmp_{L})\in \iSet$, define $\xVec_{\iIdx}=\xCmp_{\iCmp_{1}}\cdots \xCmp_{\iCmp_{L}}$.
 We have the following theorem for solving the eigensystem associated with the full generator $\genFull$:

\begin{theorem}
\label{thm_main}
For all $n\in\N_0$, the eigenfunction $\eFun_{\eIdx}(\xVec)$ of $\genFull $ can be represented by~\eqref{eq_linear_rep}. The corresponding eigenvalues $-\eVal_\eIdx$ and the coefficients $\uCoeff_{\eIdx,\mIdx}$ can be found by solving the infinite-dimensional eigensystem
\begin{equation}\label{eq_matrix_eqn}
	\eVec_\eIdx{\bf M} = \eVec_\eIdx\eVal_\eIdx,
\end{equation}
where $\eVec_\eIdx=(\uCoeff_{\eIdx,\mIdx})_{\mIdx\in\N_0^{\numAlleles-1}}$ and
\begin{equation}
{\bf M} = \mathrm{diag} \left(\{\lambda^\mutVec_{|\mIdx|}\}_{\mIdx\in\N_0^{\numAlleles-1}}\right) + \sum_{\iIdx\in\iSet} q(\iIdx) \polyMatrix^\mutVec_\iIdx.
\end{equation} 
Here, $\lambda^\mutVec_{|\mIdx|}$ is defined as in \eqref{eq:neutral_eVal} and, for $\iIdx=(\iCmp_1,\ldots,\iCmp_L)\in\iSet$, we define
$\polyMatrix^\mutVec_\iIdx=\polyMatrix^\mutVec_{\iCmp_1}\cdots\polyMatrix^\mutVec_{\iCmp_L}$, where $\polyMatrix^\mutVec_{\iCmp}$ is given in \eqref{eq:Gi}.
When $L=0$, $\polyMatrix^\mutVec_\iIdx$ is defined to be the identity matrix.  Explicit expressions of the constants $q(\iIdx)$ are provided in Appendix~\ref{app_poly_coeff}. 
\end{theorem}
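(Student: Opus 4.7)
The plan is to substitute the ansatz $\eFun_\eIdx(\xVec) = e^{-\meanFitness(\xVec)/2}\,g_\eIdx(\xVec)$, where $g_\eIdx(\xVec) := \sum_{\mIdx\in\N_0^{\numAlleles-1}} \uCoeff_{\eIdx,\mIdx}\,\jacobi_\mIdx^\mutVec(\xVec)$, into the eigenvalue equation $\genFull\,\eFun_\eIdx = -\eVal_\eIdx\,\eFun_\eIdx$, reduce both sides to expansions in the orthogonal basis $\{\jacobi_\mIdx^\mutVec\}$ using Lemmas~\ref{lem_eigen_neutral} and~\ref{lem_mult_recurrence}, and then match coefficients. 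The role of the exponential prefactor, mirroring the conjugation trick used by \citet{Song2012} for $\numAlleles=2$, is to arrange that the differential part of $\genFull$ acting on $g_\eIdx$ collapses to $\genNeutral g_\eIdx$, with the entire selection contribution surviving only as multiplication by a polynomial $V(\xVec)$.

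The first step is to derive, for a generic smooth $g\colon\simplex_{\numAlleles-1}\to\R$, the identity
\begin{equation*}
\genFull[e^{-\meanFitness/2} g] = e^{-\meanFitness/2}[\genNeutral g + V(\xVec)\, g].
\end{equation*}
Expanding the first and second derivatives via the product rule, the coefficient of $\partial_i g$ in the result is $\driftTerm_i(\xVec) - \tfrac{1}{2}\sum_j \diffTerm_{i,j}(\xVec)\,\partial_j \meanFitness(\xVec)$, and a direct calculation using $\partial_j \meanFitness = 2[\selCmp_j(\xVec)-\selCmp_\numAlleles(\xVec)]$ together with $\meanFitness(\xVec) = \sum_{k=1}^{\numAlleles} \xCmp_k\selCmp_k(\xVec)$ (with $\xCmp_\numAlleles = 1-|\xVec|$) reduces $\tfrac{1}{2}\sum_j \diffTerm_{i,j}\,\partial_j \meanFitness$ to $\xCmp_i[\selCmp_i(\xVec)-\meanFitness(\xVec)]$, which exactly cancels the selection contribution to $\driftTerm_i$ from~\eqref{eq_drift_selection} and leaves the neutral drift. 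The residual scalar multiplier reads
\begin{equation*}
V(\xVec) = \tfrac{1}{8}\sum_{i,j}\diffTerm_{i,j}(\partial_i\meanFitness)(\partial_j\meanFitness) - \tfrac{1}{4}\sum_{i,j}\diffTerm_{i,j}(\partial_i\partial_j \meanFitness) - \tfrac{1}{2}\sum_i \driftTerm_i(\partial_i \meanFitness).
\end{equation*}
Since $\diffTerm_{i,j}$ has degree $2$ in $\xVec$, each $\partial_i\meanFitness$ is linear, $\partial_i\partial_j\meanFitness$ is constant, and $\driftTerm_i$ has degree at most $3$ (through the cubic term $\xCmp_i\meanFitness(\xVec)$), the polynomial $V(\xVec)$ has total degree at most $4$, and so can be written as $V(\xVec) = \sum_{\iIdx\in\iSet} q(\iIdx)\,\xVec_\iIdx$ with coefficients obtained by expanding the three summands and collecting monomials.

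Next, applying this identity with $g = g_\eIdx$ and invoking Lemma~\ref{lem_eigen_neutral} gives $\genNeutral g_\eIdx = -\sum_\mIdx \uCoeff_{\eIdx,\mIdx}\,\eVecNeutr^\mutVec_{|\mIdx|}\,\jacobi_\mIdx^\mutVec$, producing the diagonal block $\mathrm{diag}(\{\eVecNeutr^\mutVec_{|\mIdx|}\})$ of ${\bf M}$ after matching. For the polynomial term, iterating the single-coordinate recurrence of Lemma~\ref{lem_mult_recurrence} shows $\xVec_\iIdx\,\jacobi_\mIdx^\mutVec = \sum_\kIdx [\polyMatrix^\mutVec_{\iCmp_1}\cdots\polyMatrix^\mutVec_{\iCmp_L}]_{\mIdx,\kIdx}\,\jacobi_\kIdx^\mutVec$ for each $\iIdx=(\iCmp_1,\ldots,\iCmp_L)\in\iSet$; the commutativity of the matrices $\polyMatrix^\mutVec_i$ noted after Corollary~\ref{cor_polyMatrix} makes this product well defined independently of how $\iIdx$ is ordered. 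Corollary~\ref{cor_polyMatrix} further justifies passing each multiplication by $\xCmp_i$ through the infinite series defining $g_\eIdx$, which yields $V(\xVec)\,g_\eIdx = \sum_\kIdx\bigl[\eVec_\eIdx\cdot \sum_\iIdx q(\iIdx)\,\polyMatrix^\mutVec_\iIdx\bigr]_\kIdx\jacobi_\kIdx^\mutVec$. Substituting these two expansions into the eigenvalue equation, cancelling the common factor $e^{-\meanFitness/2}$, and equating coefficients of each $\jacobi_\kIdx^\mutVec$ (legitimate by the orthogonality and completeness of the multivariate Jacobi basis from Lemma~\ref{lem_orthogonality}) yields the row-vector equation $\eVec_\eIdx\,{\bf M} = \eVec_\eIdx\,\eVal_\eIdx$, with ${\bf M}$ taking the stated form once any overall sign in the definition of $q(\iIdx)$ is absorbed.

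The main obstacle is not conceptual but one of bookkeeping: expanding each of the three summands in the formula for $V(\xVec)$ into monomials $\xCmp_{\iCmp_1}\cdots\xCmp_{\iCmp_L}$ and collecting like terms to extract closed-form expressions for every $q(\iIdx)$ is straightforward but lengthy, and it is natural to defer this to the appendix cited in the theorem statement. A secondary technical point is justifying the termwise manipulation of the infinite series $g_\eIdx$: this is guaranteed by Corollary~\ref{cor_polyMatrix} together with $\eFun_\eIdx\in L^2(\simplex_{\numAlleles-1},\stat)$, which is equivalent to $g_\eIdx\in L^2(\simplex_{\numAlleles-1},\statNeutr)$ because $e^{\meanFitness(\xVec)}$ is bounded above and below on the compact simplex.
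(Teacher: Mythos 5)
Your proposal is correct and takes essentially the same route as the paper: the conjugation identity you derive for $\genFull\big[e^{-\meanFitness(\xVec)/2}g\big]$ is exactly the content of equation~\eqref{eq_L_on_H} (with your $V=-Q$, established in Appendix~\ref{app_derivation_polynomial} by the same product-rule cancellation of the cross terms against $\genSel$), and the remaining steps --- Lemma~\ref{lem_eigen_neutral} for the neutral part, iterated application of Corollary~\ref{cor_polyMatrix} to the degree-four polynomial, and coefficient matching via the orthogonality of the $\jacobi^{\mutVec}_{\mIdx}$ --- coincide with the paper's argument.
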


\medskip\noindent\emph{Remarks:}
\begin{enumerate}[i)]
\item Although $\bf M$ is infinite dimensional, it is in fact sparse with only finitely many non-zero entries in every row and column.
\item Equation~\eqref{eq_matrix_eqn} implies that $\eVal_\eIdx$ and $\eVec_\eIdx$ are in fact the left eigenvalues and eigenvectors of ${\bf M}$. To solve the eigensystem in practice requires some truncation of the matrix to finite dimensions, as in the $\numAlleles=2$ case described in \sref{sec:review_K2}.   
For a given $n\in\N_0$, we would like both $\eVal_\eIdx$ and $\uCoeff_{\eIdx,\mIdx}$ to converge as the truncation level increases.  In \sref{sec_simulations}, we demonstrate that this is indeed the case  using empirical examples.
\end{enumerate}
We now provide a proof of the theorem.
\begin{proof}[Proof of Theorem~\ref{thm_main}]
Substituting \eqref{eq_linear_rep} into \eqref{eq:eigen_L} we obtain
\[
\sum_{\kIdx \in\N_0^{\numAlleles-1}} \uCoeff_{\eIdx,\kIdx} \genFull \modJacobi_{\kIdx}(\xVec) = - 
\sum_{\kIdx \in\N_0^{\numAlleles-1}} \eVal_{\eIdx} \uCoeff_{\eIdx,\kIdx} \modJacobi_{\kIdx}(\xVec).
\]
It is shown in Appendix~\ref{app_derivation_polynomial} that 
\begin{equation}
\genFull \modJacobi_\kIdx(\xVec) = - e^{-\meanFitness(\xVec)/2} \left[ \lambda^\mutVec_{\vert \kIdx \vert}\jacobi_{\kIdx}^{\mutVec}(\xVec) + Q(\xVec;\selMatrix,\mutVec) \jacobi_{\kIdx}^{\mutVec}(\xVec) \right],
\label{eq_L_on_H}
\end{equation}
where
\[
Q(\xVec;\selMatrix,\mutVec) = \frac{1}{2} \left[ \sum_{i=1}^{\numAlleles} \xCmp_{i} \selCmp_{i}^{2}(\xVec) + \sum_{i=1}^{\numAlleles} \mutCmp_{i} \selCmp_{i}(\xVec) + \sum_{i=1}^{\numAlleles} \xCmp_{i} \selCmp_{i,i} - (1+|\mutVec|) \meanFitness(\xVec) - \meanFitness(\xVec)^{2} \right],
\]
with $\selCmp_{i}(\xVec)$ and $\meanFitness(\xVec)$ defined as in \eqref{eq:marginalFitness} and \eqref{eq:meanFitness}, respectively.  Thus, one arrives at the following equation:
\begin{equation}
	\sum_{\kIdx \in\N_0^{\numAlleles-1}} \eVal_\eIdx \uCoeff_{\eIdx,\kIdx} \jacobi_{\kIdx}^{\mutVec}(\xVec) = \sum_{\kIdx \in\N_0^{\numAlleles-1}} \uCoeff_{\eIdx,\kIdx} \big[\lambda^\mutVec_{\vert \kIdx \vert} \jacobi_{\kIdx}^{\mutVec}(\xVec) + Q(\xVec;\selMatrix,\mutVec) \jacobi_{\kIdx}^{\mutVec}(\xVec) \big].
\label{eq:equate_coeff}
\end{equation}
We solve the equation by first representing $Q(\xVec;\selMatrix,\mutVec) \jacobi_{\kIdx}^{\mutVec}(\xVec)$ as a finite linear combination of $\{\jacobi_{\bf k}^{\mutVec}(\xVec)\}_{{\bf k}\in\N_0^{\numAlleles-1}}$. Observe that $Q$ is in fact a fourth-order polynomial in $\xVec$. Collecting terms, $Q$ can be written in the form
\begin{equation}
Q(\xVec;\selMatrix,\mutVec) = \sum_{\iIdx\in\iSet}q(\iIdx)\xVec_{\iIdx},
\label{eq_Q}
\end{equation}
for the constants $q(\iIdx)$ given in Appendix~\ref{app_poly_coeff}. Applying Corollary~\ref{cor_polyMatrix} recursively, we obtain
\begin{equation}
Q(\xVec;\selMatrix,\mutVec) \jacobi_{\kIdx}^{\mutVec}(\xVec)  =  \sum_{\iIdx\in\iSet} q(\iIdx) \sum_{\lIdx\in\N_0^{\numAlleles-1}}[\polyMatrix^\mutVec_\iIdx]_{\kIdx,\lIdx}\jacobi_\lIdx^{\mutVec}(\xVec).
\end{equation}
Finally, substituting this equation into~\eqref{eq:equate_coeff}, multiplying both sides of \eqref{eq:equate_coeff} by $\jacobi_\mIdx^{\mutVec}(\xVec)$, and integrating with respect to $\statNeutr(\xVec)$ over the simplex $\simplex_{\numAlleles-1}$ yields the matrix equation~\eqref{eq_matrix_eqn}.
\end{proof}

\section{Empirical Results and Applications}
\label{sec_simulations}
In this section, we study the convergence behavior of the eigenvalues and eigenvectors as we approximate the solutions of~\eqref{eq_matrix_eqn}. Further, we show how the spectral representation can be employed to obtain the transient and stationary density explicitly (especially the normalizing constant), and to characterize the convergence rate of the diffusion to stationarity. A Mathematica implementation of the relevant formulas for computing the spectral representation is available from the authors upon request.

\subsection{Convergence of the eigenvalues and eigenvectors}
\label{subsec_cvg}
In what follows we order the Jacobi polynomials according to the \emph{graded lexicographic ordering} of their corresponding indices. Thus $\jacobi^{\mutVec}_{\nIdx_1} < \jacobi^{\mutVec}_{\nIdx_2}$ if
\begin{itemize}
\item $\vert \nIdx_1 \vert < \vert \nIdx_2 \vert$, or
\item $\vert \nIdx_1 \vert = \vert \nIdx_2 \vert$ and $\nIdx_1$ is lexicographically smaller than $\nIdx_2$.
\end{itemize}
Fix $\numAlleles$ and note that, for a given \emph{truncation level} $\matrixCutoff \in \N_0$ and $l \in \N_0$, there are ${l + \numAlleles -2 \choose \numAlleles -2}$ polynomials $\jacobi^{\mutVec}_\nIdx$ with $\vert \nIdx \vert = l$, and $\smallerIndices(\matrixCutoff) := {\matrixCutoff + \numAlleles -1 \choose \numAlleles -1}$ polynomials with index $\vert \nIdx \vert \leq \matrixCutoff$. For the computations in the rest of this section we chose $\numAlleles = 3$, unless otherwise stated.

Now, one can obtain a finite-dimensional linear system approximating~\eqref{eq_matrix_eqn} by truncation, that is, taking only those entries in ${\bf M}$ and $\eVec_n$ whose associated index vectors satisfy $\vert \nIdx \vert \leq D$. More explicitly, with ${\bf M}^{[\matrixCutoff]} = \big([{\bf M}]_{{\bf k},{\bf l}}\big)\in\R^{\smallerIndices(\matrixCutoff) \times \smallerIndices(\matrixCutoff)}$ and $\eVec_\eIdx^{[\matrixCutoff]}=(\uCoeff_{\eIdx,{\bf k}})\in\R^{\smallerIndices(\matrixCutoff)}$, where ${\bf k}, {\bf l} \in \N_0^{K-1}$ such that $\vert {\bf k} \vert, \vert {\bf l} \vert \leq \matrixCutoff$, the solutions of
\begin{equation}
\eVec_\eIdx^{[\matrixCutoff]} {\bf M}^{[\matrixCutoff]} = \eVec_\eIdx^{[\matrixCutoff]} \eVal_\eIdx^{[\matrixCutoff]}
\end{equation}
should approximate the solutions of the infinite system $\eVal_\eIdx$ and $\eVec_\eIdx$.
The convergence patterns of $\eVal_\eIdx^{[\matrixCutoff]}$ and $\uCoeff_{\eIdx,{\bf k}}^{[\matrixCutoff]}$ as $\matrixCutoff$ increases are exemplified in Figure~\ref{fig:convergence} for the parameters 
\begin{enumerate}[i)]
\item $K=3, \mutVec = (0.01,0.02,0.03)$, $\selMatrix = \selMatrix_1 :=\left(
\begin{array}{ccc}
	12 & 14 & 15 \\
	14 & 11 & 13\\
	15 & 13 & 0
\end{array}\right)$; and
\item $K=3, \mutVec = (0.01,0.02,0.03)$, $\selMatrix = \selMatrix_2 :=\left(
\begin{array}{ccc}
	120 & 140 & 150 \\
	140 & 110 & 130\\
	150 & 130 & 0
\end{array}\right)$.
\end{enumerate}
Figure~\ref{fig:convergence_more} displays the convergence behavior for the parameters
\begin{enumerate}[i)]
\setcounter{enumi}{2}
\item $K=3, \mutVec = (10,20,30)$, $\selMatrix = \selMatrix_1$; and
\item $K=4, \mutVec = (0.01,0.02,0.03,0.04)$, $\selMatrix_3=\left(
\begin{array}{cccc}
	12 & 14 & 15 & 16 \\
	14 & 11 & 10 & 13 \\
	15 & 10 & 9  & 14  \\
	16 & 13 & 14 & 0 
\end{array}\right)$.
\end{enumerate}

In all cases, $\eVal_\eIdx^{[\matrixCutoff]}$ and $\uCoeff_{\eIdx,{\bf k}}^{[\matrixCutoff]}$ converge with increasing truncation level to empirical limits. The eigenvalues $\eVal_\eIdx^{[\matrixCutoff]}$ decrease towards the empirical limit, whereas the coefficients $\uCoeff_{\eIdx,{\bf k}}^{[\matrixCutoff]}$ show oscillatory behavior before ultimately stabilizing. The rate of convergence is faster for smaller selection intensity. Varying the mutation parameters does not influence convergence behavior significantly. As expected, $\eVal_0^{[\matrixCutoff]}$ converges rapidly to zero in all cases, consistent with the fact that the diffusion has a stationary distribution. For a fixed $\eIdx$, $\eVal_\eIdx^{[\matrixCutoff]}$ and its associated coefficients $\uCoeff_{\eIdx,{\bf k}}^{[\matrixCutoff]}$ roughly converge at similar truncation levels.

\begin{figure}
\centerline{
	\subfigure[]{\includegraphics[width=.49\textwidth]{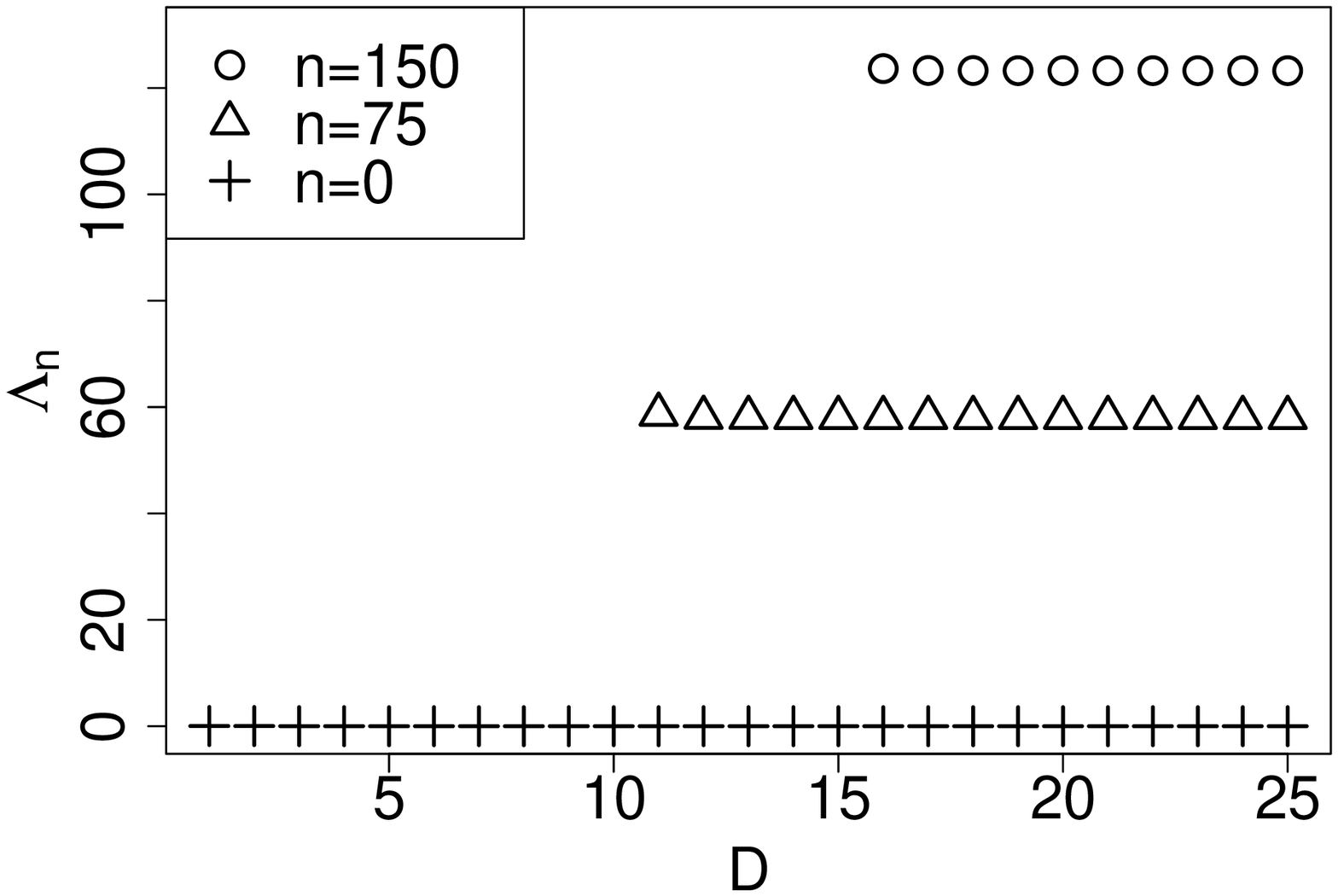}}
	\subfigure[]{\includegraphics[width=.49\textwidth]{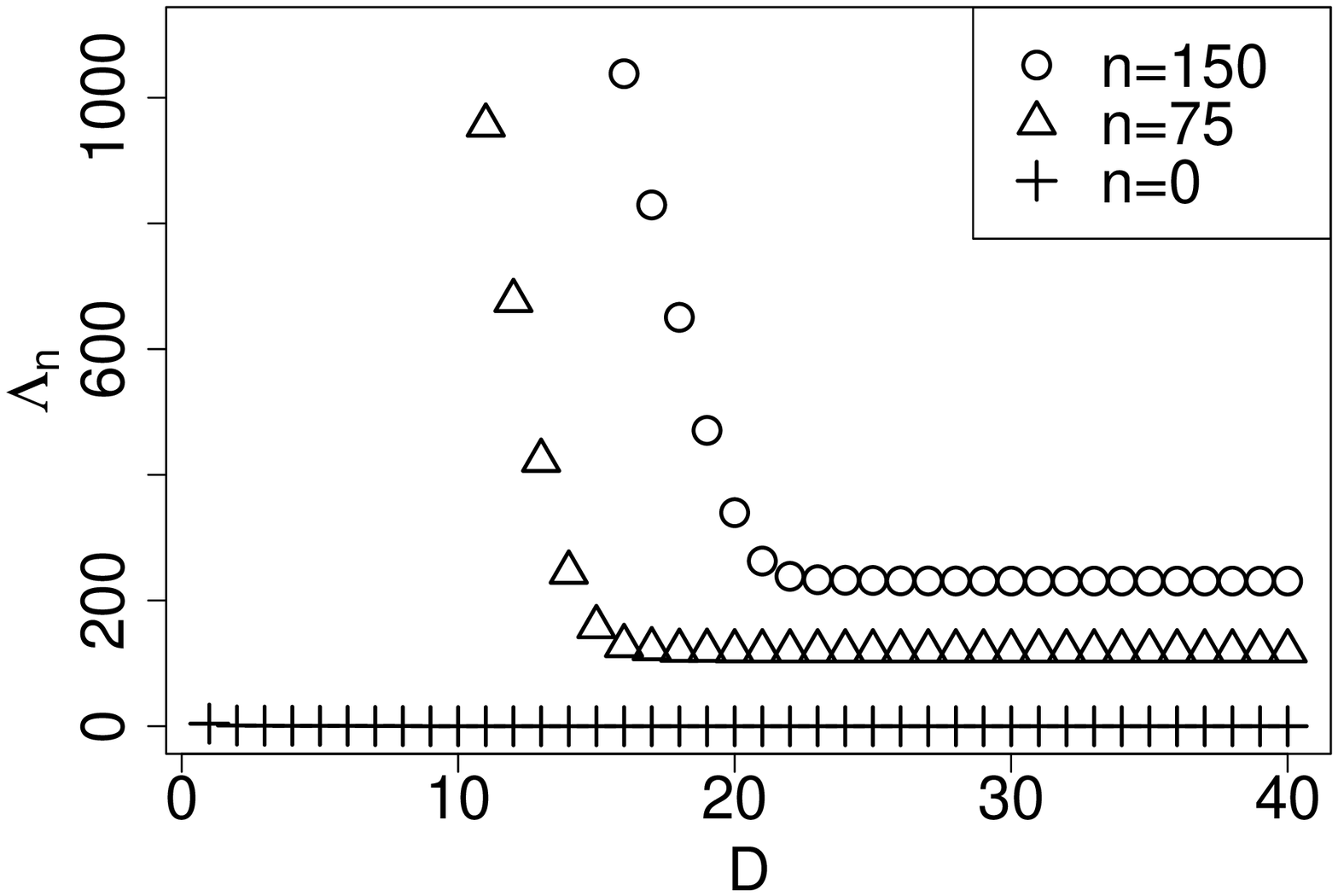}}
}
\centerline{
	\subfigure[]{\includegraphics[width=.49\textwidth]{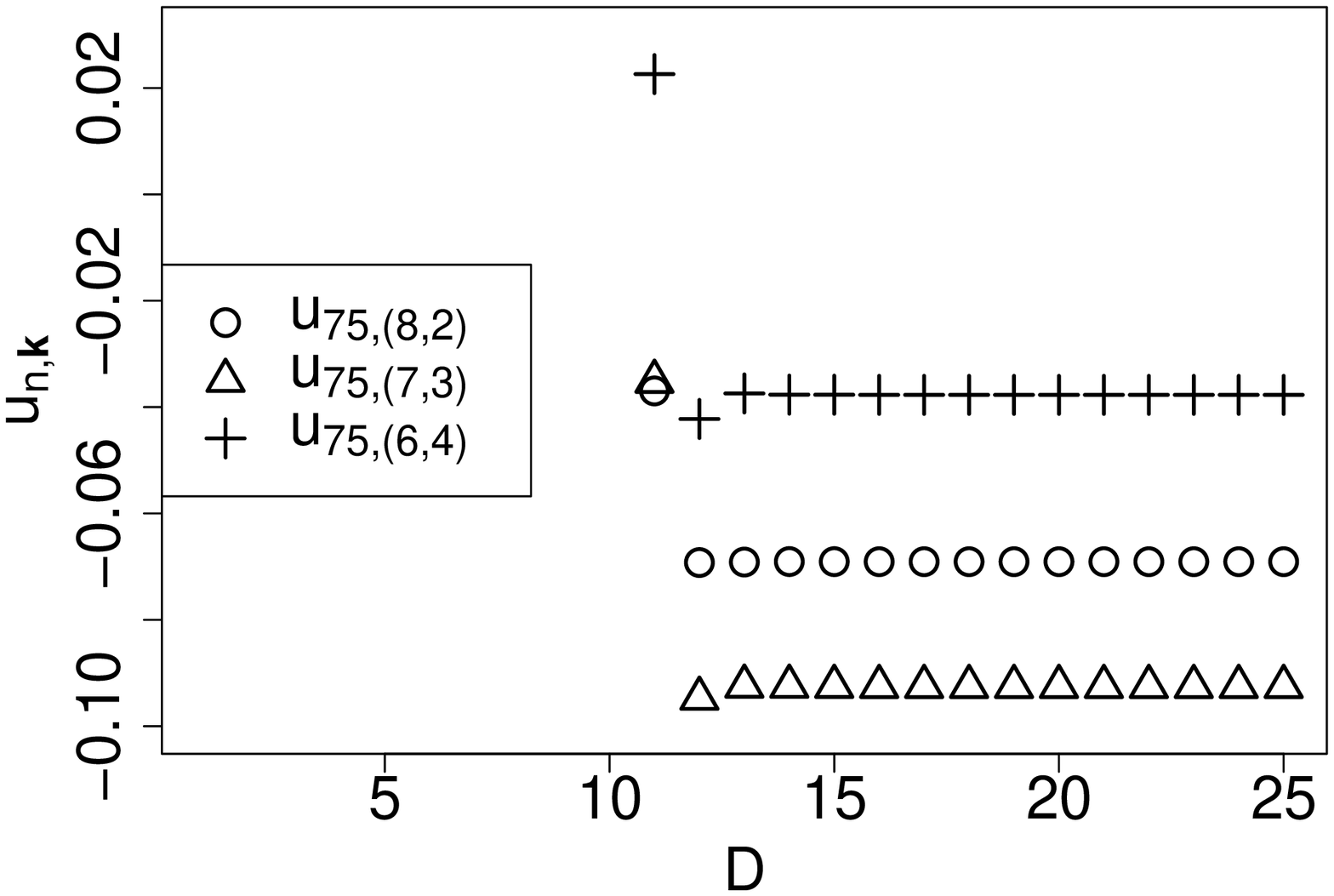}}
	\subfigure[]{\includegraphics[width=.49\textwidth]{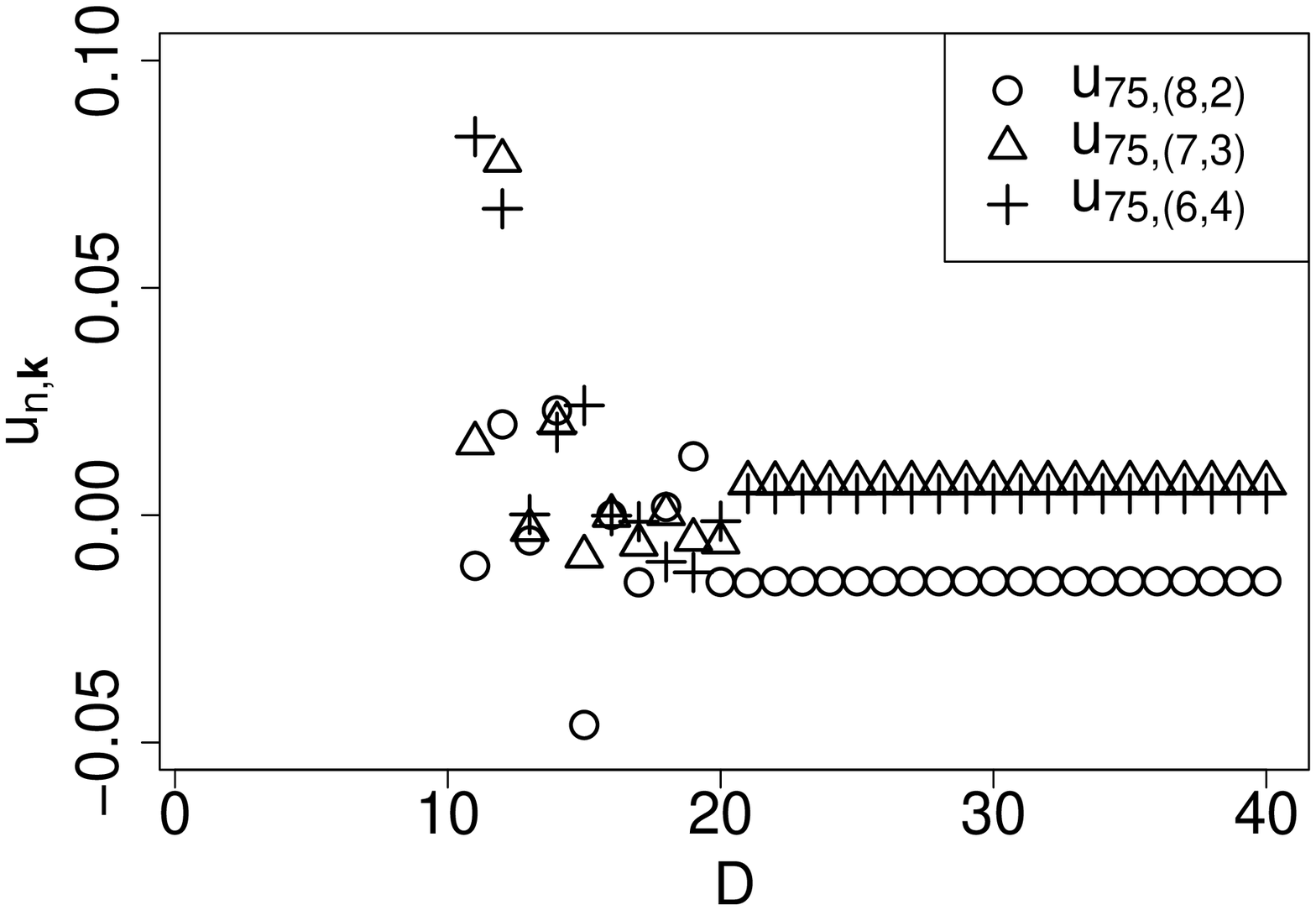}}
}
\caption{Convergence of the truncated eigenvalues $\eVal_{\eIdx}$ and coefficients of the eigenvectors $\eVec_{\eIdx}$ as the truncation level $D$ increases, for $\numAlleles=3$ with low mutation rates. Subfigures~(a) and~(b) show $\eVal_0^{[\matrixCutoff]}$, $\eVal_{75}^{[\matrixCutoff]}$, and $\eVal_{150}^{[\matrixCutoff]}$ for $\selMatrix=\selMatrix_1$ and $\selMatrix=\selMatrix_2$, respectively. Subfigures~(c) and~(d) show $\uCoeff_{75,(8,2)}^{[\matrixCutoff]}$, $\uCoeff_{75,(7,3)}^{[\matrixCutoff]}$, and $\uCoeff_{75,(6,4)}^{[\matrixCutoff]}$ for $\selMatrix=\selMatrix_1$ and $\selMatrix=\selMatrix_2$, respectively. The mutation rates were set to $\mutVec=(0.01,0.02,0.03)$ for all computations.
}
\label{fig:convergence}
\end{figure}

\begin{figure}
\centerline{
	\subfigure[]{\includegraphics[width=.49\textwidth]{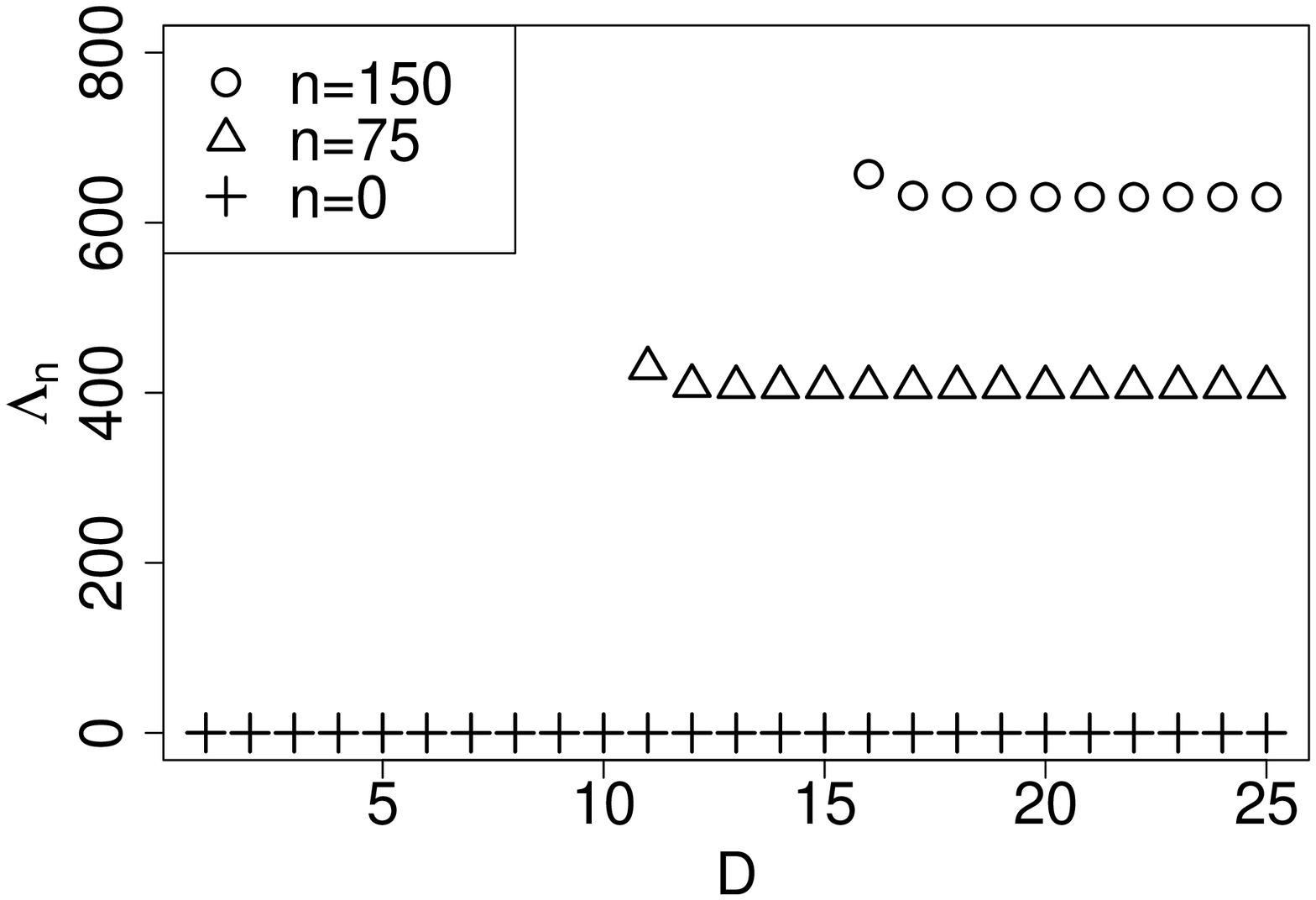}}
	\subfigure[]{\includegraphics[width=.49\textwidth]{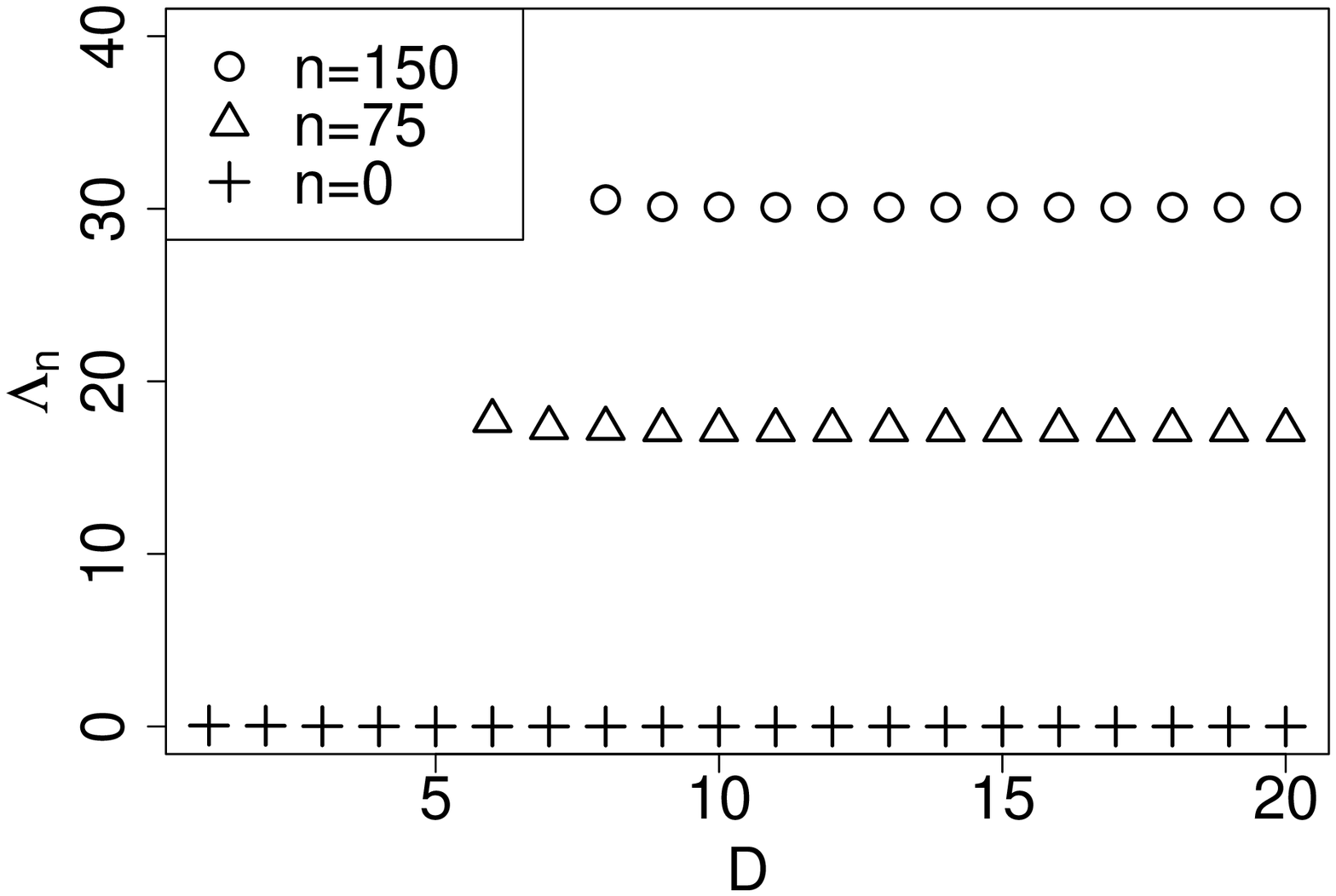}}
}
\centerline{
	\subfigure[]{\includegraphics[width=.49\textwidth]{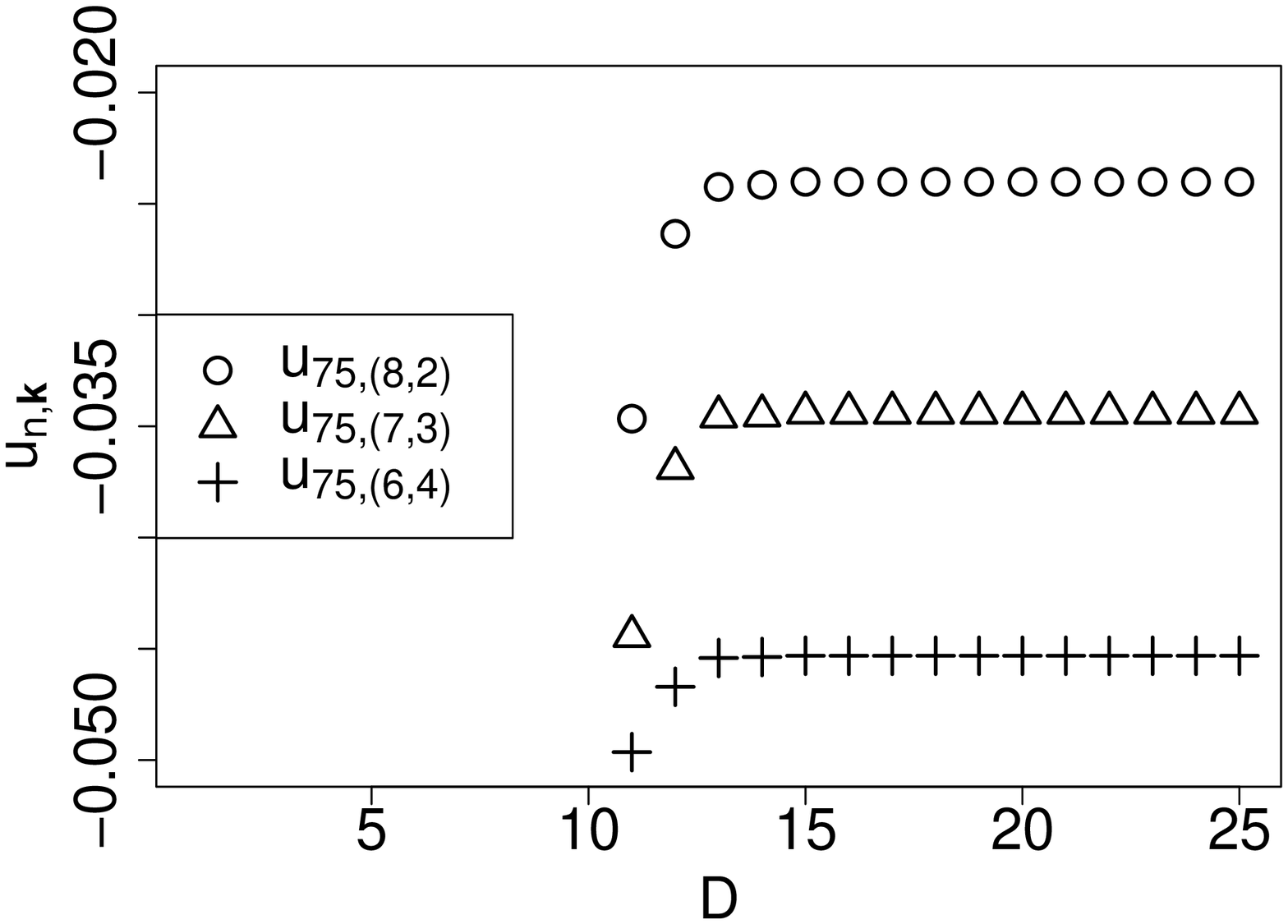}}
	\subfigure[]{\includegraphics[width=.49\textwidth]{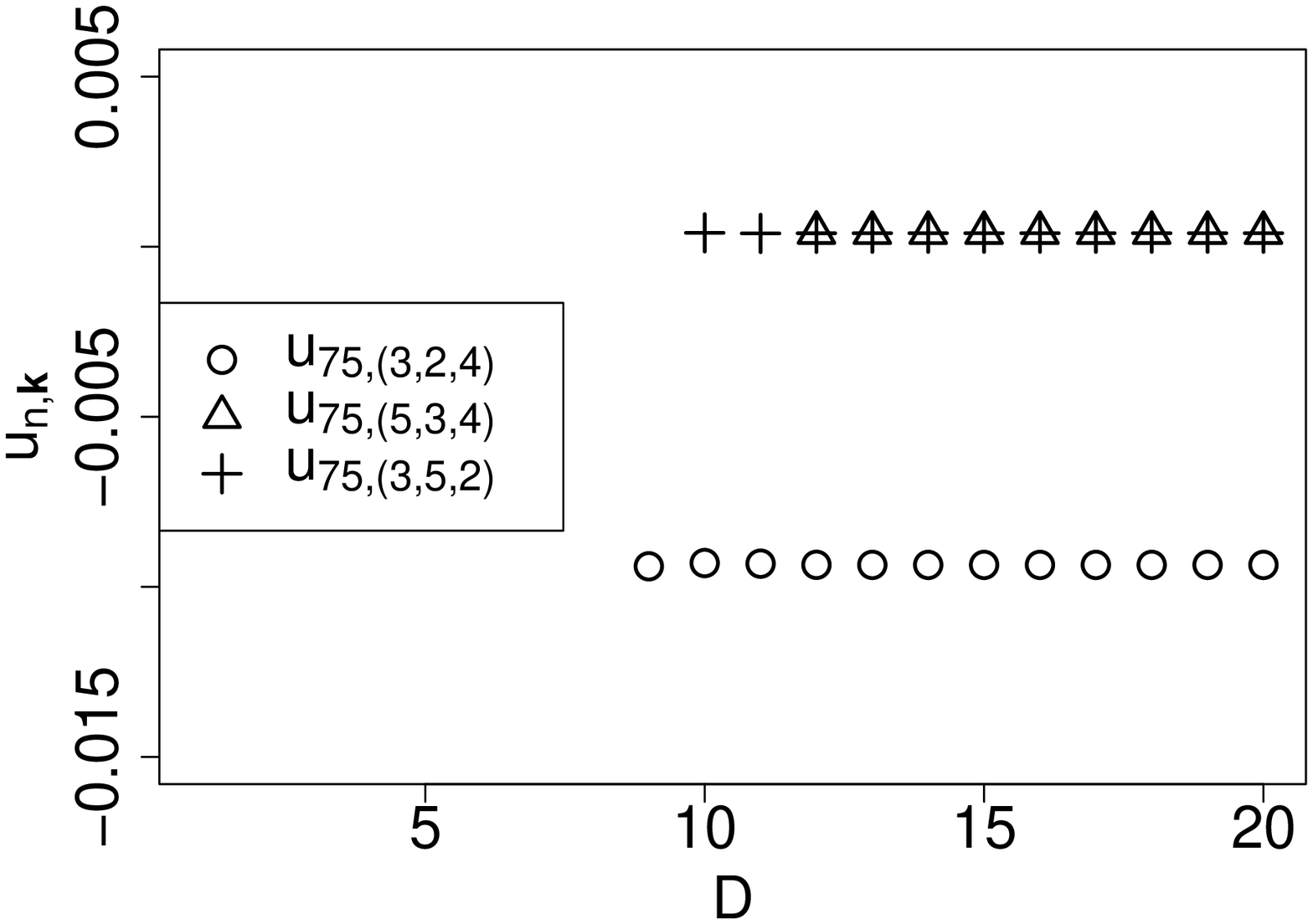}}
}
\caption{Convergence of the truncated eigenvalues $\eVal_{\eIdx}$ and coefficients of the eigenvectors $\eVec_{\eIdx}$ as the truncation level $D$ increases, for $\numAlleles=3$ with high mutation rates and for $\numAlleles=4$ with low mutation rates. Subfigures~(a) and~(c) show $\eVal_n^{[\matrixCutoff]}$ for $n=0,75,150$, and $\uCoeff_{75,\mIdx}^{[\matrixCutoff]}$ for $\mIdx=(8,2),(7,3),(6,4)$, respectively, for mutation rates $\mutVec=(10,20,30)$ and selection coefficients $\selMatrix=\selMatrix_1$. The convergence behavior for $\numAlleles=4$ is shown in subfigures~(b) and~(d) for $\eVal_n^{[\matrixCutoff]}$ with $n=0,75,150$, and $\uCoeff_{75,\mIdx}^{[\matrixCutoff]}$ with $\mIdx=(3,2,4),(5,3,4),(3,5,2)$, respectively;  the mutation rates were set to $\mutVec=(0.01,0.02,0.03,0.04)$ and the selection coefficients to $\selMatrix=\selMatrix_3$.}
\label{fig:convergence_more}
\end{figure}

Figure \ref{fig:evalues} shows $\eVal_\eIdx^{[\matrixCutoff]}$ for $\matrixCutoff=24$ and $0\leq \eIdx \leq 35$ under neutrality ($\selMatrix = \mathbf{0}$) and selection ($\selMatrix =\selMatrix_1$ and $\selMatrix =\frac{1}{4}\selMatrix_2$). Upon inspection all of the eigenvalues displayed have converged properly. Under neutrality, the eigenvalues are functions of $|\nIdx|$, thus they are degenerate and cluster into groups. In the presence of selection, however, we empirically observe that all of the eigenvalues are distinct. For moderate selection intensity, the group structure is less prominent. In general, increasing the selection parameters evens out the group structure and shifts the entire spectrum upward.

Computing the transition density function for large selection coefficients requires combining terms of substantially different orders of magnitude, because of the exponential weighting factors in the density~\eqref{eq_stat_dens_sel} and in the expansion~\eqref{eq:modJacobi}. Therefore, the coefficients $\uCoeff_{\eIdx,{\bf k}}^{[\matrixCutoff]}$ have to be calculated with high precision to obtain accurate numerical results under strong selection.

\begin{figure}
\centerline{
	\includegraphics[width=0.5\linewidth]{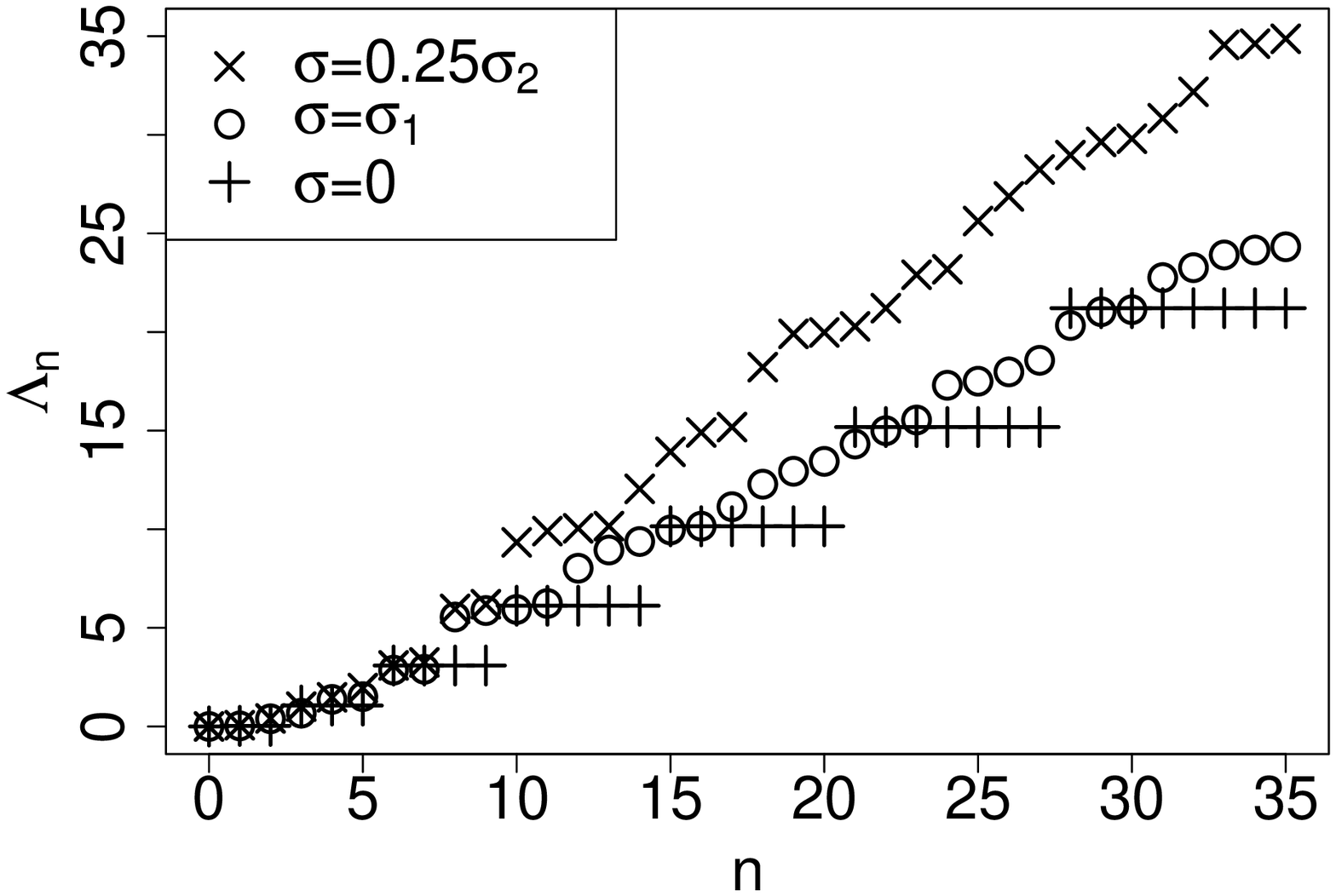}
}
\caption{
The first 36 eigenvalues of the different spectra for the selection parameters $\selMatrix=\boldsymbol{0}$, $\selMatrix_1$ and $\frac{1}{4}\selMatrix_2$, respectively.  The latter was chosen so that the ranges of the eigenvalues are comparable. The truncation level $\matrixCutoff$ was set to 24 and mutation rates $\mutVec=(0.01,0.02.0.03)$ were used.
}
\label{fig:evalues}
\end{figure}

\subsection{Transient and stationary densities}
\label{sec_stationary_distribution}

The approximations to the eigenvalues $\eVal_\eIdx^{[\matrixCutoff]}$ and the eigenfunctions $\eFun_\eIdx$ (via the eigenvectors $\eVec_\eIdx^{[\matrixCutoff]}$ and equation~\eqref{eq_linear_rep}) can be used in the spectral representation \eqref{eq_tdf} to approximate the transition density function at arbitrary times $t$. Examples with $\selMatrix = \selMatrix_1$ for different times are given in \fref{fig_tdf}. At first, the density is concentrated around the initial frequencies $\xVec = (0.02,0.02,0.96)$, but as time increases, the frequencies of the first and second allele increases, since these have a higher relative fitness. Eventually, the transition density converges to the stationary distribution (similar to distribution at $t=2$), where the bulk of the mass is concentrated at high frequencies for the first and second allele.

\begin{figure}
\centerline{
	\includegraphics[width=.49\textwidth]{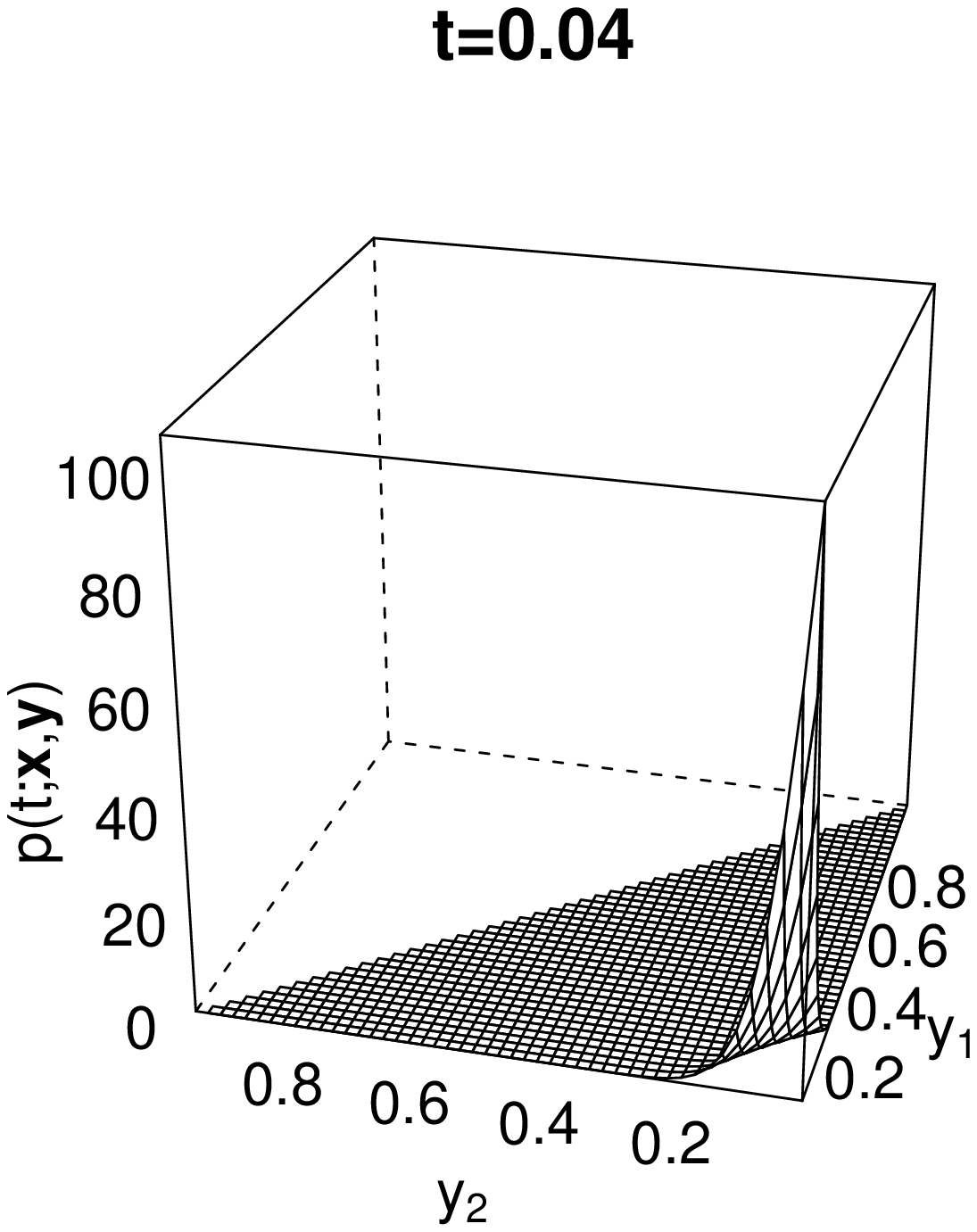}
	\includegraphics[width=.49\textwidth]{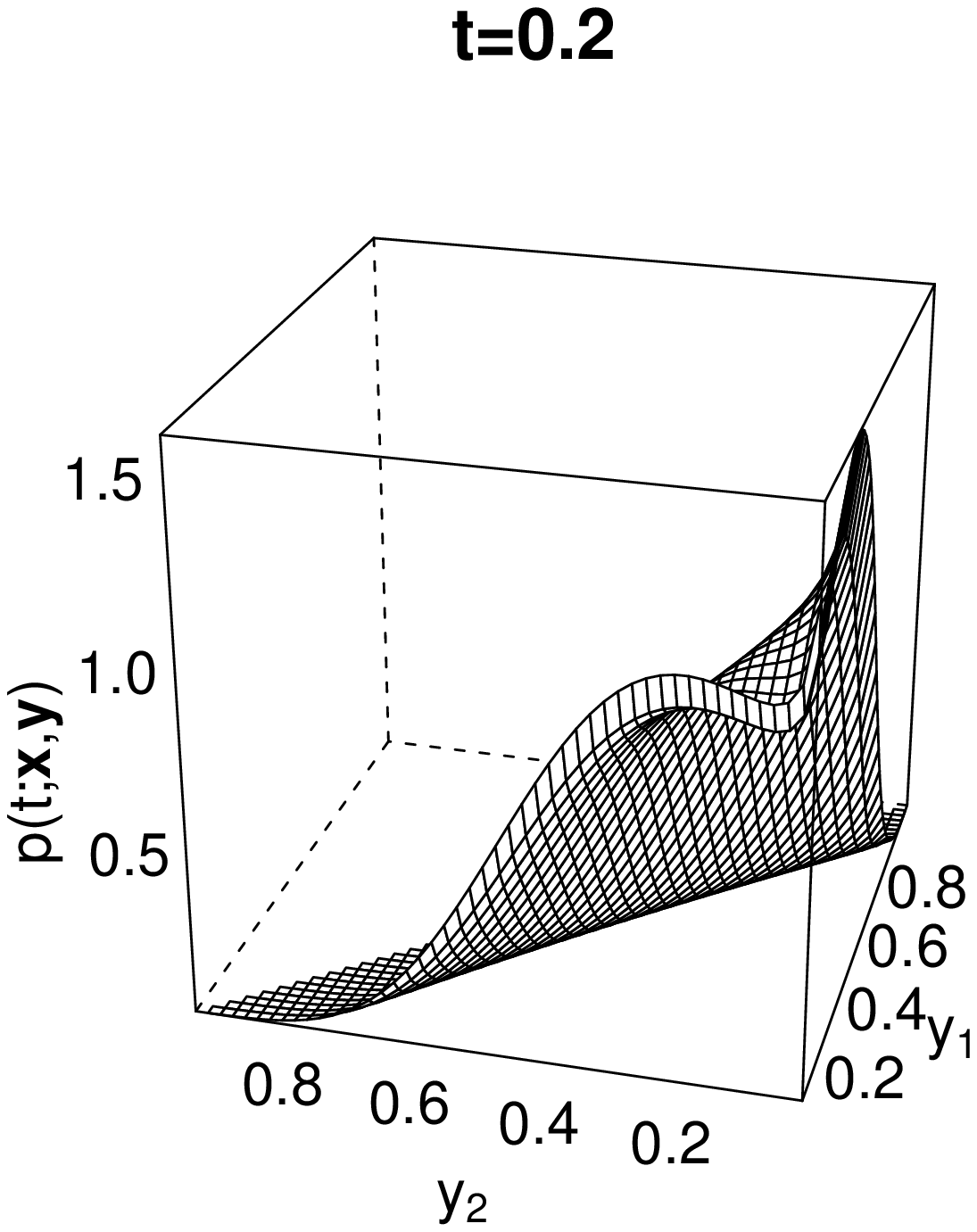}
}
\centerline{
	\includegraphics[width=.49\textwidth]{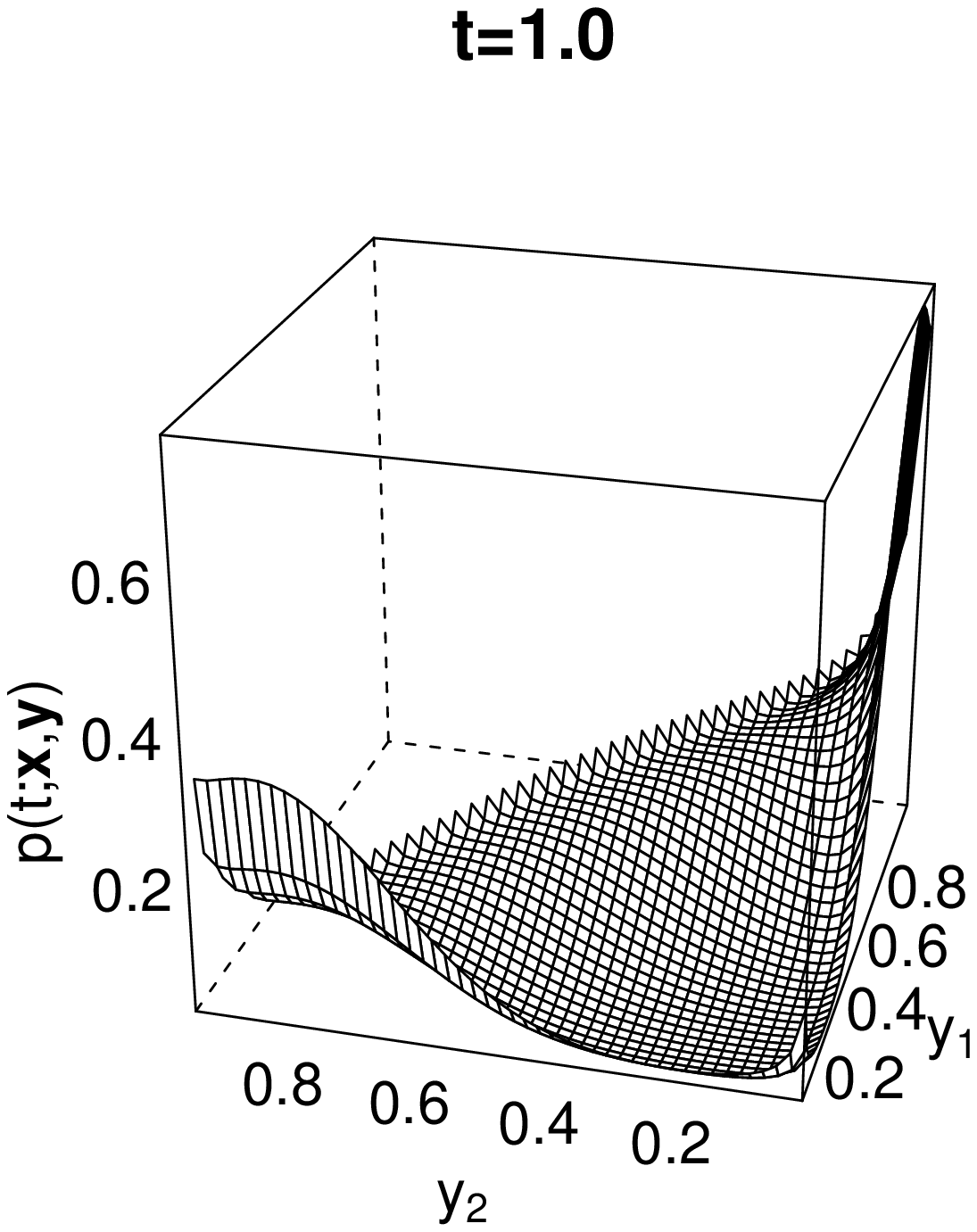}
	\includegraphics[width=.49\textwidth]{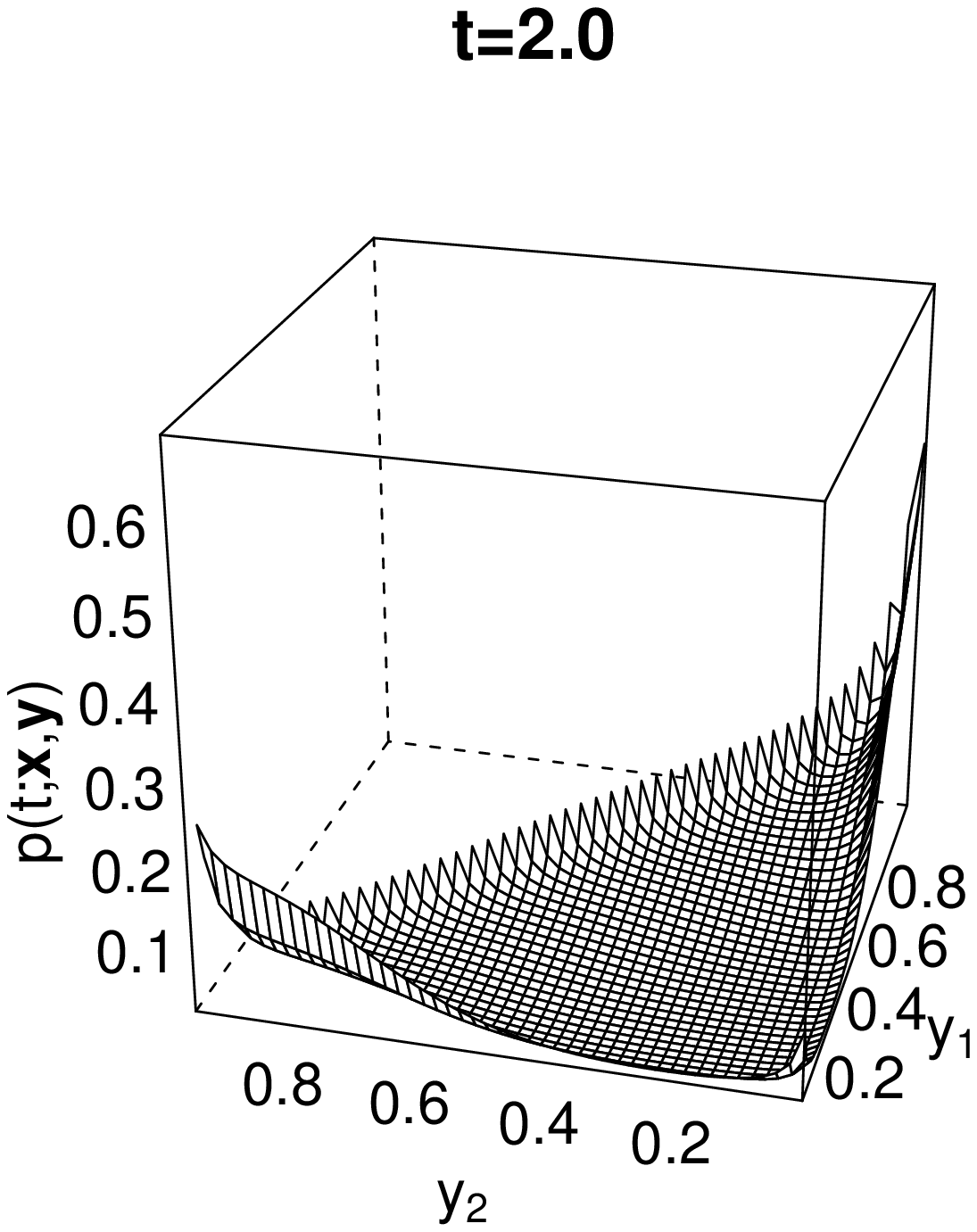}
}
\caption{Approximation of the transition density function~\eqref{eq_tdf} for different times $t \in \{0.04,0.2,1.0,2.0\}$. Selection was governed by the matrix of coefficients $\selMatrix = \selMatrix_1$ and $\xVec = (0.02,0.02,0.96)$ was used as initial condition. The truncation level was set to $\matrixCutoff = 40$, whereas the summation in equation~\eqref{eq_linear_rep} ranged over all $\mIdx$ such that $0 \leq \vert \mIdx \vert \leq 36$, and all eigenfunctions and eigenvalues with $0 \leq \eIdx \leq 561$ were included in equation~\eqref{eq_tdf}. The mutation rates were set to $\mutVec=(0.01,0.02.0.03)$. The plots only vary in $\yCmp_1$ and $\yCmp_2$, since $\yCmp_3 = 1 - \yCmp_1 -\yCmp_2$.}
\label{fig_tdf}
\end{figure}

The eigenvalues $\eVal_\eIdx^{[\matrixCutoff]}$ and coefficients $\eVec_{\eIdx,\mathbf{k}}^{[\matrixCutoff]}$ can also be employed to approximate the constant that normalizes the stationary distribution $\stat(\xVec)$ to a proper probability distribution. Following the same line of argument as~\citet{Song2012}, the orthogonal relations enable us to circumvent the difficulty involved in directly evaluating a multivariate integral over the simplex $\simplex_{\numAlleles-1}$. First, note that since $\genFull$ maps constant functions to zero, any constant function is an eigenfunction with associated eigenvalue $\eVal_0=0$, thus $\eFun_0(\xVec) = \eFun_0(\yVec) = \text{const}$.  In \eqref{eq_tdf}, taking $t\to\infty$, we get
\begin{equation}\label{eq_stat_distr}
\lim_{t\to\infty}p(t;\xVec,\yVec)=\stat(\yVec)\frac{\eFun_0(\xVec)\eFun_0(\yVec)}{\langle \eFun_0,\eFun_0\rangle_{\stat}} =: \frac{1}{\normConst}\stat(\yVec).
\end{equation}
Then for $\xVec=\yVec={\bf 0}$, by \eqref{eq_linear_rep} we have
\begin{equation}\label{eq_normConst_with_sel}
	\begin{split}
		\normConst & = \int_{\simplex_{\numAlleles-1}} \stat({\bf z}) d{\bf z} = \frac{\langle \eFun_0,\eFun_0\rangle_{\stat}}{\eFun_0({\bf 0})^2} \\
			& = \frac{ \sum_{\mIdx\in\N_0^{\numAlleles-1}} \uCoeff_{0,\mIdx}^2 \langle \jacobi_\mIdx^{\mutVec} , \jacobi_\mIdx^{\mutVec}\rangle _{\statNeutr}}{ e^{-\meanFitness({\bf 0})} \left( \sum_{\mIdx\in\N_0^{\numAlleles-1}} \uCoeff_{0,\mIdx} \jacobi_\mIdx^{\mutVec}({\bf 0}) \right)^2}\\
			& = \frac{ \sum_{\mIdx\in\N_0^{\numAlleles-1}} \uCoeff_{0,\mIdx}^2 \jacLength_\mIdx^{\mutVec} }{ \left(\sum_{\mIdx\in\N_0^{\numAlleles-1}} \uCoeff_{0,\mIdx} \prod_{j=1}^{\numAlleles-1} \frac{\Gamma(\nIdxCmp_j+\mutCmp_j)}{\Gamma(\nIdxCmp_j+1)\Gamma(\mutCmp_j)} \right)^2 },
	\end{split}
\end{equation}
since $\meanFitness({\bf 0}) = \selCmp_{\numAlleles,\numAlleles} = 0$ and $\uJacobi_{\nIdxCmp_j}^{(\mutCmp_j,\mutTot_j+2\nTot_j)}(0)=(-1)^{\nIdxCmp_j}\frac{\Gamma(\nIdxCmp_j+\mutCmp_j)}{\Gamma(\nIdxCmp_j+1)\Gamma(\mutCmp_j)}$.  Here $\jacLength_\mIdx^{\mutVec}$ is the constant defined in \eqref{eq_const}. The purely algebraic form of the right hand side in equation~\eqref{eq_normConst_with_sel} allows to compute an accurate approximation of the normalizing constant $\normConst$ by replacing the infinite sums by sums over all indices $\mIdx$ such that $\vert \mIdx \vert$ is less or equal then a given truncation level. This offers an attractive alternative to other computationally intensive methods \citep{Donnelly2001,Genz2003,Buzbas2009}.

\fref{fig_stat} shows two examples of stationary distributions for different selection coefficients. In \fref{fig_statA} the  stationary density is concentrated in the interior of the simplex, since all homozygotes are less fit then the heterozygotes. This situation is referred to as heterozygote advantage, resulting in a balancing selection pattern, and the different alleles co-exist at stationarity. In \fref{fig_statB}, allele number 1 is strongly favored by the given selection coefficients, and thus the stationary density is concentrated at high frequencies for this allele.

\begin{figure}
\centerline{
	\subfigure[Selection coefficients][Selection coefficients: $\selMatrix = \left(\begin{matrix}0&15&15\\15&0&15\\15&15&0\end{matrix}\right)$\label{fig_statA}]{\includegraphics[width=.49\textwidth]{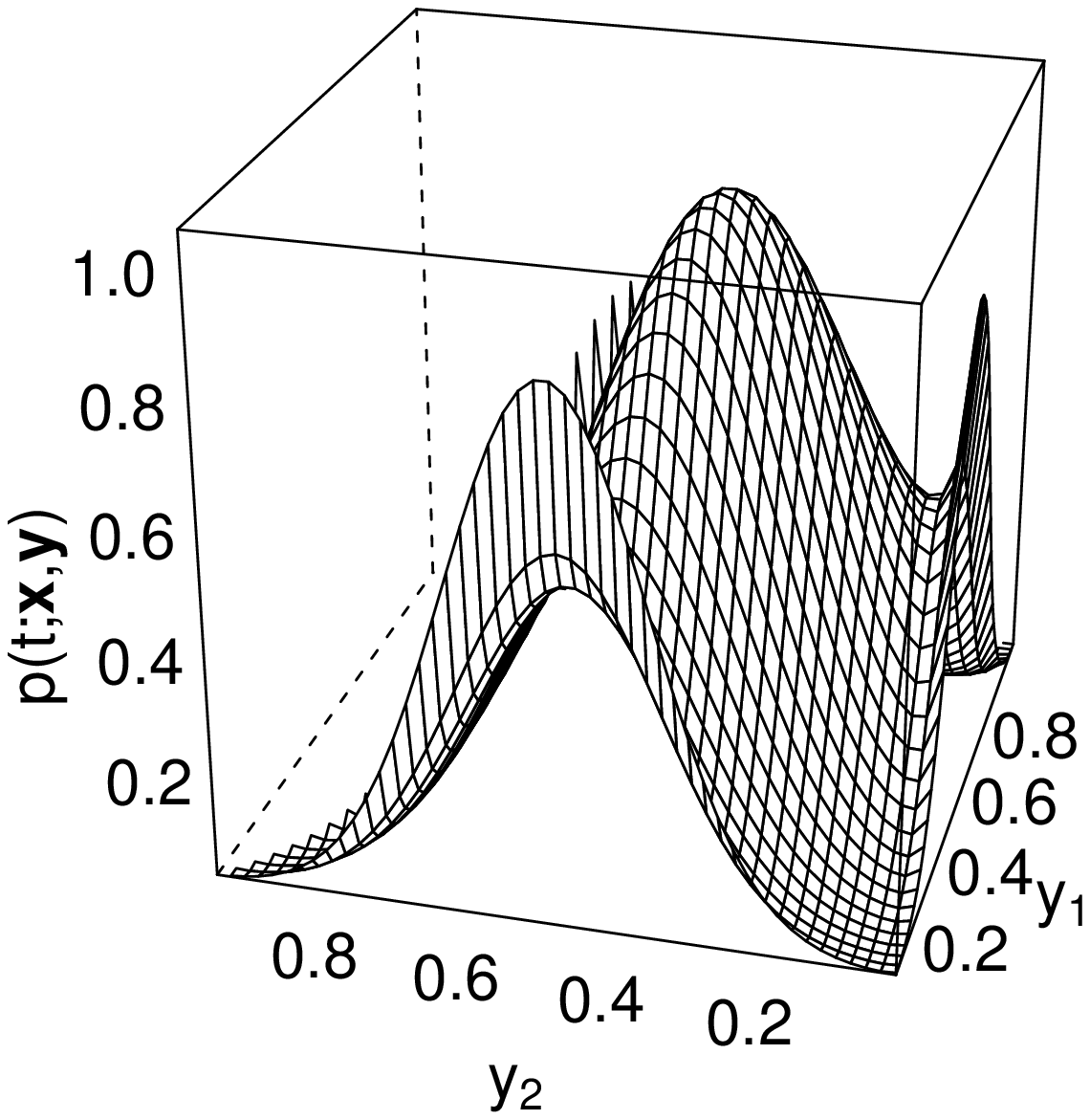}}
	\subfigure[Selection coefficients][Selection coefficients: $\selMatrix = \left(\begin{matrix}10&6&-5\\6&-6&6\\-5&6&0\end{matrix}\right)$\label{fig_statB}]{\includegraphics[width=.49\textwidth]{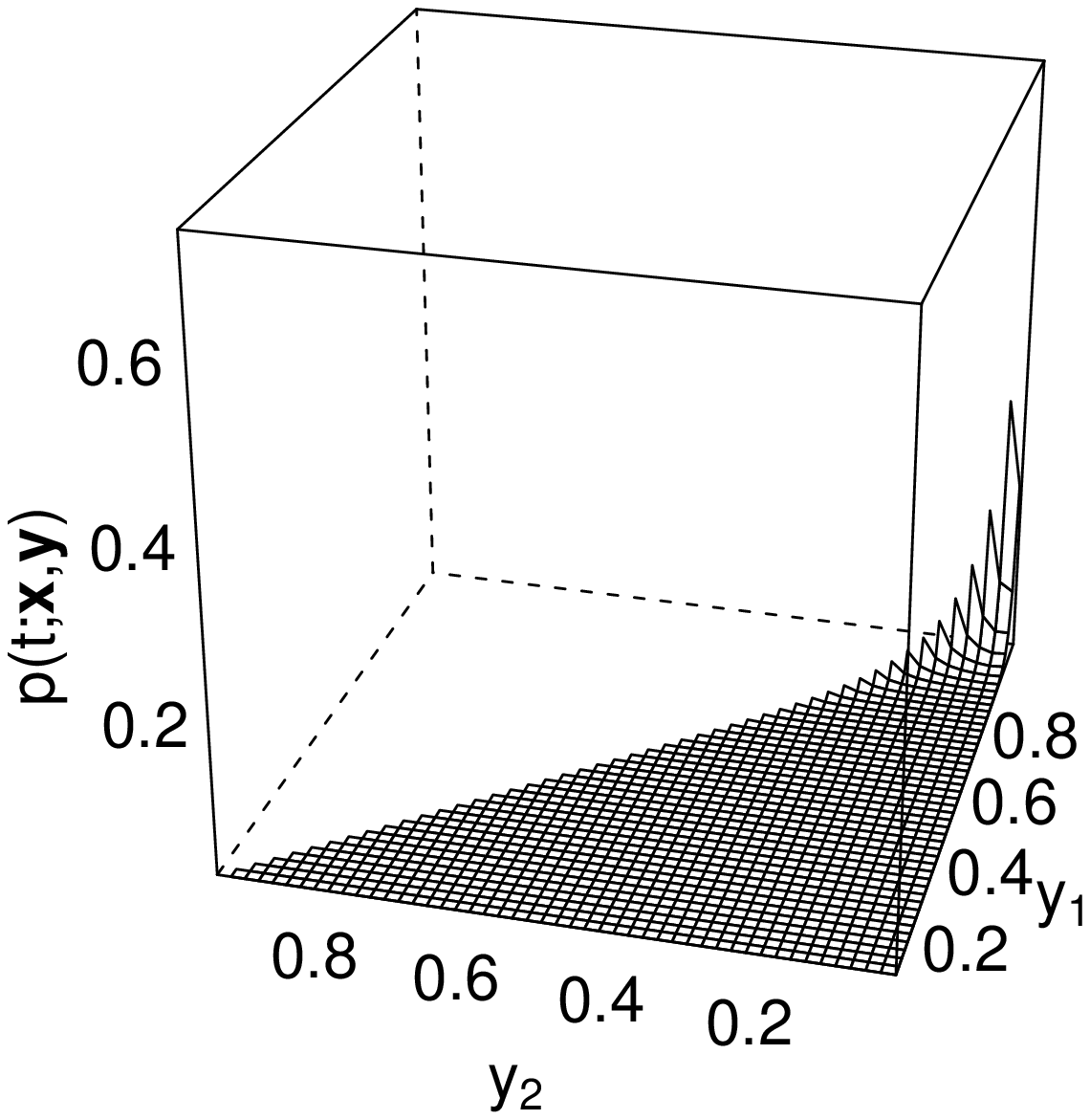}}
}
\caption{Two examples of the stationary distribution for different selection parameters. The mutation rates were set to $\mutVec=(0.01,0.02.0.03)$ in both cases. Again, a truncation level of $\matrixCutoff = 40$ was used, the summation in equation~\eqref{eq_normConst_with_sel} ranged over all $\mIdx$ such that $0 \leq \vert \mIdx \vert \leq 36$.  The plots only vary in $\yCmp_1$ and $\yCmp_2$, since $\yCmp_3 = 1 - \yCmp_1 -\yCmp_2$.}
\label{fig_stat}
\end{figure}

We can also use \eqref{eq_tdf} to investigate the rate of convergence of the diffusion process to the stationary distribution. Denote the difference between the transition density and the stationary density by
\begin{eqnarray*}
d(t;\xVec,\yVec) & := & p(t;\xVec,\yVec) - \frac{1}{\normConst}\stat(\yVec)\\
 & = & \sum_{\eIdx=1}^{\infty}e^{-\eVal_\eIdx t}\stat(\yVec)\frac{\eFun_\eIdx(\xVec)\eFun_\eIdx(\yVec)}{\langle \eFun_\eIdx,\eFun_\eIdx\rangle _{\stat}}.
\end{eqnarray*}
We measure the magnitude of $d(t;\xVec,\yVec)$ by the square of its $L^2$ norm with respect to the weight function $1/\stat(\yVec)$, that is,
\begin{equation}\label{eq_dev_from_stat}
	\begin{split}
		\left\Vert d(t;\xVec,\cdot) \right\Vert_{1/\stat}^2 & := \langle d,d\rangle _{1/\stat} = \sum_{\eIdx=1}^{\infty}e^{-2\eVal_\eIdx t}\frac{\eFun_\eIdx(\xVec)^2}{\langle \eFun_\eIdx,\eFun_\eIdx\rangle _{\stat}} \\
			& = \sum_{\eIdx=1}^{\infty}e^{-2\eVal_\eIdx t}\frac{e^{-\meanFitness(\xVec)}\left(\sum_{{\bf k}\in\N_0^{\numAlleles-1}}\uCoeff_{\eIdx,{\bf k}}\jacobi_{\bf k}^{\mutVec}(\xVec)\right)^2}{\sum_{\mIdx\in\N_0^{\numAlleles-1}}u^2_{\eIdx,\mIdx}\jacLength_\mIdx^{\mutVec}}.
	\end{split}
\end{equation}
Again, the sums in this expression can be approximated by truncating at a given level. Figure~\ref{fig_density_cvg} shows  $\left\Vert d(t;\xVec,\cdot)\right\Vert^2_{1/\stat}$ as a function of time $t$, for $\selMatrix=\selMatrix_1$, $\selMatrix=0.5\selMatrix_1$, $\selMatrix=0.1\selMatrix_1$. The initial frequencies were $\xVec=(0.02,0.02,0.96)$. As expected, the distance to the stationary distribution decreases over time. Further, the rate of convergence is faster if the values in $\selMatrix$ get larger, which was observed by \citet{Song2012} too. We note that the spectral representation can also be readily employed to study convergence rates measured by other metrics such as the total variation distance or relative entropy.

\begin{figure}
\centerline{
	\includegraphics[width=0.5\linewidth]{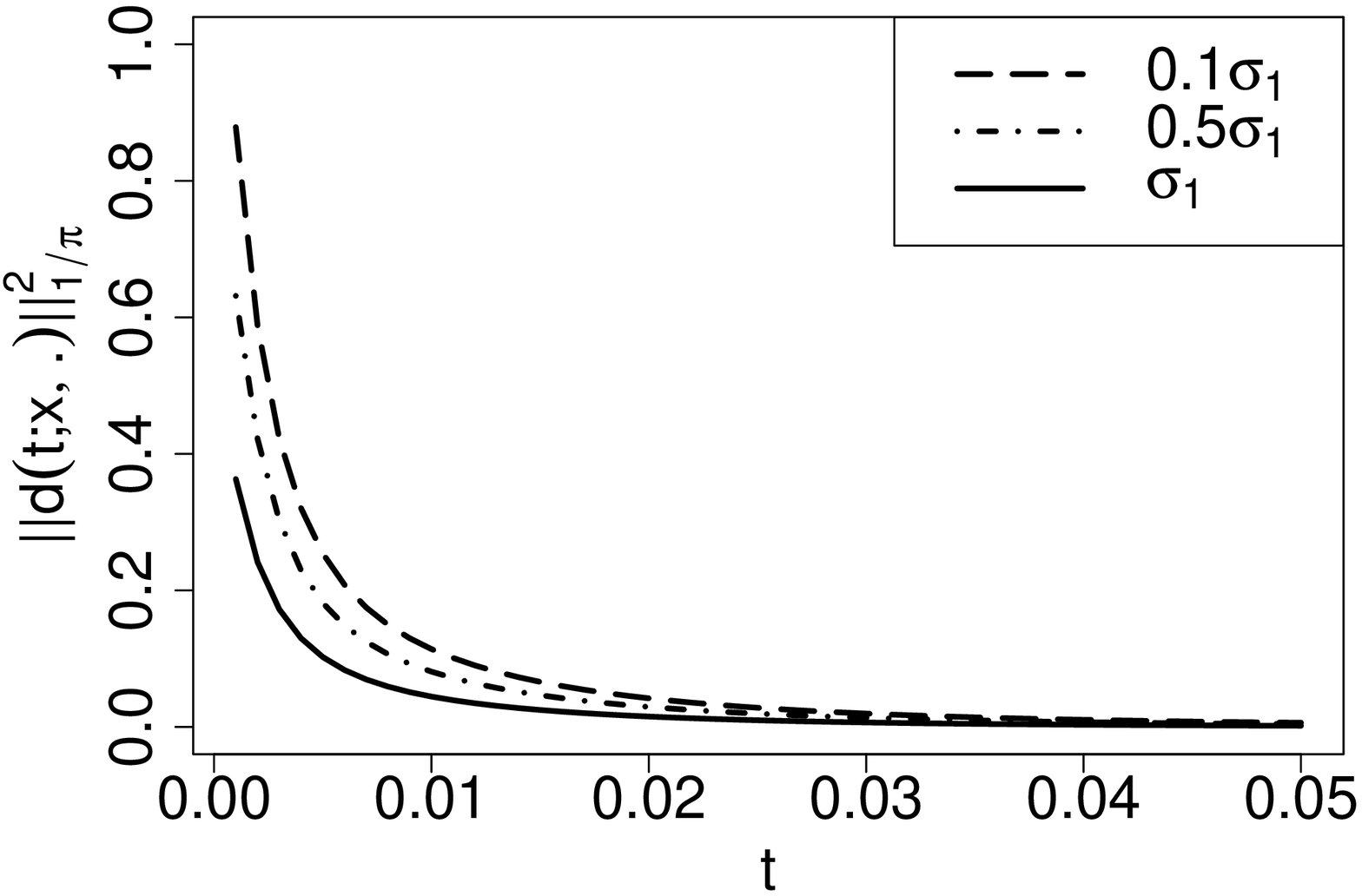}
}
\caption{Convergence of the transition density to stationarity as time evolves, for initial frequencies $\xVec=(0.02,0.02,0.96)^T$. Deviation from the stationary density is measured by $\left\Vert d(t;\xVec,\cdot)\right\Vert^2_{1/\stat}$, defined in~\eqref{eq_dev_from_stat}. The mutation rates were chosen to be $\mutVec=(0.01,0.02,0.03)$ and the selection parameters were $\selMatrix=0.1\selMatrix_1$, $\selMatrix=0.5\selMatrix_1$ and $\selMatrix=0.1\selMatrix_1$, respectively. The truncation level was set to $D=40$, and~\eqref{eq_dev_from_stat} was approximated by summing over $0\leq \eIdx \leq 561$ and $\mIdx,\mathbf{k}$ such that $0 \leq \vert \mIdx \vert, \vert \mathbf{k} \vert \leq 36$.} 
\label{fig_density_cvg}
\end{figure}

\section{Discussion}
\label{sec_discussion}

In this paper, we have extended the method of \citet{Song2012} to obtain an explicit spectral representation of the transition density function for the multi-dimensional Wright-Fisher diffusion under a PIM model with general diploid selection and an arbitrary number of alleles.  We have demonstrated the fidelity and fast convergence of the approximations.  Further, as an example application of our work, we have computed the normalization constant of the stationary distribution and quantified the rate at which the transition density approaches this distribution.

Efficient approximations of the eigensystem and the transition density function lead to a number of important applications. Combining the stationary distribution discussed in \sref{sec_stationary_distribution} with the recurrence relation shown in Lemma~\ref{lem_mult_recurrence}, one can calculate algebraically the probability of observing a given genetic configuration of individuals sampled from the stationary distribution of the non-neutral diffusion.  
This kind of algebraic approach would complement previous works \citep{Evans2007,Zivkovic2011} on sample allele frequency spectra that involve solving ODEs satisfied by the moments of the diffusion.
Further, the algebraic approach is potentially more efficient than computationally expensive Monte Carlo methods \citep{Donnelly2001} and more generally applicable than methods relying on the selection coefficients being of a certain form \citep{Genz2003}. Note that, by discretizing time and space, our representation of the transition density function can be used for approximate simulation of frequencies from stationarity as well as frequency trajectories, which can in turn be employed in the aforementioned Monte Carlo frameworks.

The sampling probability can be applied, for example, to estimate evolutionary parameters via maximum likelihood or Bayesian inference frameworks. Furthermore, the  notion of sampling probability can be combined with the spectral representation of the transition density function in a hidden Markov model framework as in \citet{Bollback2008}, to calculate the probability of observing a series of configurations sampled at different times. The method developed in this paper would allow for such an analysis in a model with multiple alleles subject to recurrent parent-independent mutation and general diploid selection. 

An important, albeit very challenging, future direction is to extend our current approach to analyze the dynamics of multi-locus diffusions with recombination and selection.  Such an extension would allow for the incorporation of additional data at closely linked loci, which has the potential to significantly improve the inference of evolutionary parameters, especially the strength and localization of selection.  
We have only considered Wright-Fisher diffusions in a single panmictic population of a constant size. As achieved in the alternative approaches of \citet{Gutenkunst2009} and \citet{Lukic2011}, mentioned in Introduction, it would be desirable to generalize our approach to incorporate subdivided populations exchanging migrants, with possibly fluctuating population sizes.  Another possible extension is to relax the PIM assumption and consider a more general mutation model.

We note that our present technique relies on the diffusion generator being symmetric.  This symmetry does not hold in some of the scenarios mentioned above, making a direct application of the ideas developed here difficult.  However, we believe that it is worthwhile investigating whether one could apply our approach to devise approximations to the transition density function that are sufficiently accurate for practical applications.

\section*{Acknowledgement}

We thank Anand Bhaskar for many helpful discussions. 
This research is supported in part by a DFG Research Fellowship STE 2011/1-1 to M.S.; and by an NIH grant R01-GM094402, an Alfred P. Sloan Research Fellowship, and a Packard Fellowship for Science and Engineering to Y.S.S.

\appendix

\section{Proofs of lemmas}
\label{app_proofs}

\begin{proof}[Proof of Lemma~\ref{lem_orthogonality}]
For two indices $\nIdx,\mIdx \in \N_0^{\numAlleles-1}$, consider the integral
\begin{equation}\label{eq_orthogonality}
	\begin{split}
		\int_{\simplex_{\numAlleles-1}}  \jacobi^\mutVec_\nIdx (\xVec) \jacobi^\mutVec_\mIdx (\xVec) \statNeutr(\xVec) d\xVec & = \int_{[0,1]^{\numAlleles-1}}  \jacobiTr^\mutVec_\nIdx (\xVecTr) \jacobiTr^\mutVec_\mIdx (\xVecTr) \statNeutr\big( \xVec(\xVecTr)\big) \big|\mathrm{det}(D\xVec)(\xVecTr)\big|d\xVecTr,
	\end{split}
\end{equation}
where the right hand side can be obtained by the coordinate transformation introduced in Appendix~\ref{app_coord_transform} and the multivariate integration through substitution rule. Using
\begin{equation}\label{eq_weight_function_transformed}
	\begin{split}
		\statNeutr\big( \xVec(\xVecTr)\big) = \prod_{i=1}^{\numAlleles-1} \xCmpTr_i^{\mutCmp_i -1} (1 -\xCmpTr_i)^{\mutTot_i - (\numAlleles - i)},\\
	\end{split}
\end{equation}
the determinant of the Jacobian 
\begin{equation}
	\begin{split}
		\big|\mathrm{det}(D\xVec)(\xVecTr)\big| = \prod_{i=1}^{\numAlleles-2}(1-\xCmpTr_i)^{\numAlleles-(i+1)},
	\end{split}
\end{equation}
see~\cite{Baxter2007}[Equation~B.1], and the transformed Jacobi polynomials~\eqref{eq_transformed_jacobis}, it can be shown that
\begin{equation}
	\begin{split}
\prod_{j=1}^{\numAlleles-1} & \int_0^1 \uJacobi_{\nIdxCmp_j}^{(\mutCmp_j,\mutTot_j + 2\nTot_j)}(\xCmpTr_j) \uJacobi_{\mIdxCmp_j}^{(\mutCmp_j,\mutTot_j + 2\mTot_j)}(\xCmpTr_j) \xCmpTr_j^{\mutCmp_j -1} (1 -\xCmpTr_j)^{\mutTot_j + \nTot_j + \mTot_j- 1} d\xCmpTr_j = \jacLength^\mutVec_\nIdx \delta_{\nIdx,\mIdx}
	\end{split}
\end{equation}
holds, with
\begin{equation}
	\jacLength^\mutVec_\nIdx = \prod_{j=1}^{\numAlleles-1} \uJacLength_{\nIdxCmp_j}^{(\mutCmp_j,\mutTot_j + 2\nTot_j)}.
\end{equation}
In the case $\nIdx=\mIdx$ this can be seen immediately. If $\nIdx\neq \mIdx$ without loss of generality let $1 \leq l \leq \numAlleles-1$ be the largest $l$ such that $\nIdxCmp_l < \mIdxCmp_l$ and $\nIdxCmp_k = \mIdxCmp_k$ for all $k = l+1, \ldots, \numAlleles-1$. Then $\nTot_l = \mTot_l$ (recall $\nTot_{\numAlleles-1} = \mTot_{\numAlleles-1} = 0$) and $\uJacobi^{(\mutCmp_l,\mutTot_l + 2\mTot_l)}_{\mIdxCmp_l}(\xCmpTr_l)$ is orthogonal to all polynomials of lesser degree with respect to the weight function $\xCmpTr_l^{\mutCmp_l -1} (1 -\xCmpTr_l)^{\mutTot_l + 2\mTot_l- 1}$, and thus the $l$-th factor and the whole product is zero.
\end{proof}

\begin{proof}[Proof of Lemma~\ref{lem_mult_recurrence}]
We found it most convenient to derive a recurrence relation for
\begin{equation}\label{eq_times_x}
	\xCmp_i \jacobi_\nIdx^\mutVec (\xVec)
\end{equation}
by projecting expression~\eqref{eq_times_x} onto the orthogonal basis $\big\{ \jacobi_\mIdx^\mutVec(\xVec) \big\}$, and investigate the respective coefficients. 
First, note that the coordinate transformation introduced in Appendix~\ref{app_coord_transform} yields $\xCmp_i = \xCmpTr_i \prod_{j<i} (1 - \xCmpTr_j)$, so
\begin{equation}\label{eq_transformed_monomial}
	\xCmp_i \jacobi_\nIdx^\mutVec (\xVec) =  \xCmpTr_i \prod_{j<i} (1 - \xCmpTr_j) \jacobiTr^\mutVec_\nIdx (\xVecTr).
\end{equation}
Further, integrate expression~\eqref{eq_transformed_monomial} against the base function $\jacobiTr_\mIdx^\mutVec (\xVecTr)$ times the weight function $\statNeutr$ to get the respective coefficient in the basis representation. Using the integration by substitution rule again, as in equation~\eqref{eq_orthogonality}, this yields
\begin{equation}\label{eq_projection}
	\begin{split}
		\frac{1}{\jacLength_\mIdx^\mutVec} & \int_{[0,1]^{\numAlleles-1}} \xCmpTr_i \prod_{j<i} (1 - \xCmpTr_j) \jacobiTr^\mutVec_\nIdx (\xVecTr) \jacobiTr^\mutVec_\mIdx (\xVecTr) \prod_{k=1}^{\numAlleles-1} \xCmpTr_k^{\mutCmp_k -1} (1 - \xCmpTr_k)^{\mutTot_k - 1} d\xVecTr\\
		& = \prod_{j=i+1}^{\numAlleles-1} \frac{1}{\uJacLength_{\mIdxCmp_j}^{(\mutCmp_j,\mutTot_j + 2\mTot_j)}} \int_0^1 \uJacobi_{\nIdxCmp_j}^{(\mutCmp_j,\mutTot_j + 2\nTot_j)}(\xCmpTr_j) \uJacobi_{\mIdxCmp_j}^{(\mutCmp_j,\mutTot_j + 2\mTot_j)}(\xCmpTr_j) \xCmpTr_j^{\mutCmp_j -1} (1 -\xCmpTr_j)^{\mutTot_j + \nTot_j + \mTot_j- 1} d\xCmpTr_j\\
		& \qquad \times \frac{1}{\uJacLength_{\mIdxCmp_i}^{(\mutCmp_i,\mutTot_i + 2\mTot_i)}} \int_0^1 \uJacobi_{\nIdxCmp_i}^{(\mutCmp_i,\mutTot_i + 2\nTot_i)}(\xCmpTr_i) \uJacobi_{\mIdxCmp_i}^{(\mutCmp_i,\mutTot_i + 2\mTot_i)}(\xCmpTr_i) \xCmpTr_i^{\mutCmp_i} (1 - \xCmpTr_i)^{\mutTot_i + \nTot_i + \mTot_i- 1} d\xCmpTr_i\\
		& \qquad \times \prod_{j=1}^{i-1} \frac{1}{\uJacLength_{\mIdxCmp_j}^{(\mutCmp_j,\mutTot_j + 2\mTot_j)}} \int_0^1 \uJacobi_{\nIdxCmp_j}^{(\mutCmp_j,\mutTot_j + 2\nTot_j)}(\xCmpTr_j) \uJacobi_{\mIdxCmp_j}^{(\mutCmp_j,\mutTot_j + 2\mTot_j)}(\xCmpTr_j) \xCmpTr_j^{\mutCmp_j -1} (1 -\xCmpTr_j)^{\mutTot_j + \nTot_j + \mTot_j} d\xCmpTr_j.\\
	\end{split}
\end{equation}

The first term on the right hand side yields zero, unless $\mIdxCmp_j = \nIdxCmp_j$ for all $j>i$, thus $\mTot_i = \nTot_i$. In this case the term is equal to 1. Since $\mIdxCmp_j = \nIdxCmp_j$ for $j>i$, note that the second term on the right hand side is of the form
\begin{equation}\label{integral_weight_alpha_plus}
	\begin{split}
		\frac{1}{\uJacLength_{\mIdxCmp_i}^{(\alpha, \beta)}} & \int_0^1 \uJacobi_{\nIdxCmp_i}^{(\alpha,\beta)}(\xCmpTr) \uJacobi_{\mIdxCmp_i}^{(\alpha,\beta)}(\xCmpTr) \, \xCmpTr \, w_{\alpha,\beta}(\xCmpTr) d\xCmpTr\\
			& = G^{(\alpha,\beta)}_{\nIdxCmp_i,\mIdxCmp_i} \delta_{\nIdxCmp_i+1,\mIdxCmp_i} + G^{(\alpha,\beta)}_{\nIdxCmp_i,\mIdxCmp_i} \delta_{\nIdxCmp_i,\mIdxCmp_i} + G^{(\alpha,\beta)}_{\nIdxCmp_i,\mIdxCmp_i} \delta_{\nIdxCmp_i-1,\mIdxCmp_i},
	\end{split}
\end{equation}
with $w_{\alpha,\beta}(\xCmpTr) = \xCmpTr^{\alpha-1}(1-\xCmpTr)^{\beta-1}$, $\alpha = \mutCmp_i$, and $\beta = \mutTot_i + 2\nTot_i$. Here we applied the recurrence relation~\eqref{eq:R_recurrence} to $ \xCmpTr \uJacobi_{\nIdxCmp_i}^{(\alpha,\beta)}(\xCmpTr)$ and used the orthogonality of the Jacobi polynomials. The constants $G^{(a,b)}_{n,m}$ are given by
\[
G_{n,m}^{(a,b)} = \begin{cases}
	\frac{(n + a - 1) (n + b - 1)}{(2 n + a + b - 1) (2 n + a + b - 2)},	& \text{if $n-m=1$ and $n > 0$},\\
    \frac{1}{2} - \frac{b^2 - a^2 - 2 (b - a)}{2 (2 n + a + b) (2 n + a + b - 2)},	& \text{if $n-m=0$ and $n \geq 0$},\\
    \frac{(n + 1) (n + a + b - 1)}{(2 n + a + b) (2 n + a + b - 1)},	& \text{if $n-m=-1$ and $n \geq 0$}.\\
\end{cases}
\]

This expression is non-zero for $-1 \leq \nIdxCmp_i - \mIdxCmp_i \leq 1$. Furthermore, the form of the integral for $j=i-1$ depends on this difference, or rather the difference between $\nTot_j$ and $\mTot_j$. Depending on the difference $\nTot_j - \mTot_j$ we have to consider the integrals
\begin{align}
	-1 & \; : \; \frac{1}{\uJacLength_{\mIdxCmp_j}^{(\alpha, \beta+2)}} \int_0^1 \uJacobi_{\nIdxCmp_j}^{(\alpha,\beta)}(\xCmpTr) \uJacobi_{\mIdxCmp_j}^{(\alpha,\beta+2)}(\xCmpTr) w_{\alpha,\beta+2}(\xCmpTr) d\xCmpTr,\label{eq_first_up}\\
	0 & \; : \; \frac{1}{\uJacLength_{\mIdxCmp_j}^{(\alpha, \beta)}} \int_0^1 \uJacobi_{\nIdxCmp_j}^{(\alpha,\beta)}(\xCmpTr) \uJacobi_{\mIdxCmp_j}^{(\alpha,\beta)}(\xCmpTr) (1-\xCmpTr) w_{\alpha,\beta}(\xCmpTr) d\xCmpTr,\label{eq_first_stay}\\
	+1 & \; : \; \frac{1}{\uJacLength_{\mIdxCmp_j}^{(\alpha, \beta-2)}} \int_0^1 \uJacobi_{\nIdxCmp_j}^{(\alpha,\beta)}(\xCmpTr) \uJacobi_{\mIdxCmp_j}^{(\alpha,\beta-2)}(\xCmpTr) w_{\alpha,\beta}(\xCmpTr), d\xCmpTr,\label{eq_first_down}
\end{align}
with $\alpha = \mutCmp_j$ and $\beta = \mutTot_j + 2\nTot_j$. In expression~\eqref{eq_first_down} we have to assume $\beta>2$, which is equivalent to $\nTot_j \geq 1$. This holds true, because if $\nTot_j =0$, this case would not have to be considered.

Applying relation~\eqref{eq_increase_beta} twice to the polynomial $\uJacobi_{\nIdxCmp_j}^{(\alpha,\beta)}(\xCmpTr)$ in equation~\eqref{eq_first_up} and using orthogonality yields
\begin{equation}
	H^{(\alpha, \beta)}_{\nIdxCmp_j,\mIdxCmp_j} \delta_{\nIdxCmp_j,\mIdxCmp_j} + H^{(\alpha, \beta)}_{\nIdxCmp_j,\mIdxCmp_j} \delta_{\nIdxCmp_j -1,\mIdxCmp_j} + H^{(\alpha, \beta)}_{\nIdxCmp_j,\mIdxCmp_j} \delta_{ \nIdxCmp_j -2,\mIdxCmp_j},
\end{equation}
for some constants $H^{(\alpha, \beta)}_{n,m}$. Here $H^{(\alpha, \beta)}_{0,-1} =  H^{(\alpha, \beta)}_{0,-2} = H^{(\alpha, \beta)}_{1,-1} = 0$. Thus, in the case $\mTot_j = \nTot_j + 1$, the expression for $j$ is non-zero for $\mIdxCmp_j = \nIdxCmp_j, \nIdxCmp_j - 1,$ and $\nIdxCmp_j -2$. Furthermore, relation~\eqref{eq:R_recurrence} can be applied to the term $\uJacobi_{\nIdxCmp_j}^{(\alpha,\beta)}(\xCmpTr) (1-\xCmpTr)$, together with orthogonality to get
\begin{equation}
	I^{(\alpha,\beta)}_{\nIdxCmp_j,\mIdxCmp_j} \delta_{\nIdxCmp_j+1,\mIdxCmp_j} + I^{(\alpha,\beta)}_{\nIdxCmp_j,\mIdxCmp_j} \delta_{\nIdxCmp_j,\mIdxCmp_j} + I^{(\alpha,\beta)}_{\nIdxCmp_j,\mIdxCmp_j} \delta_{\nIdxCmp_j-1,\mIdxCmp_j},
\end{equation}
for given constants $I^{(\alpha, \beta)}_{n,m}$, with $I^{(\alpha,\beta)}_{-1,0} = I^{(\alpha,\beta)}_{0,-1} = 0$. In the case $\mTot_j = \nTot_j$, the expression is non-zero for $\mIdxCmp_j = \nIdxCmp_j - 1, \nIdxCmp_j ,$ and $\nIdxCmp_j + 1$. Finally, applying relation~\eqref{eq_increase_beta} to the term $\uJacobi_{\mIdxCmp_j}^{(\alpha,\beta-2)}(\xCmpTr)$ in expression~\eqref{eq_first_down} combined with orthogonality yields
\begin{equation}
	J^{(\alpha, \beta)}_{\nIdxCmp_j,\mIdxCmp_j} \delta_{\nIdxCmp_j,\mIdxCmp_j} + J^{(\alpha, \beta)}_{\nIdxCmp_j,\mIdxCmp_j} \delta_{\nIdxCmp_j +1,\mIdxCmp_j} + J^{(\alpha, \beta)}_{\nIdxCmp_j,\mIdxCmp_j} \delta_{\nIdxCmp_j +2,\mIdxCmp_j},
\end{equation}
for given constants $J^{(\alpha, \beta)}_{n,m}$. Again $J^{(\alpha, \beta)}_{-1,0} =  J^{(\alpha, \beta)}_{-2,0} = J^{(\alpha, \beta)}_{-1,1} = 0$. Thus this expression is non-zero for $\mIdxCmp_j = \nIdxCmp_j, \nIdxCmp_j + 1,$ and $\nIdxCmp_j + 2$. 
The constants $H_{n,m}^{(a,b)}, I_{n,m}^{(a,b)}, J_{n,m}^{(a,b)}$ are given by
\begin{align*}
	H_{n,m}^{(a,b)} & = \left\{\begin{array}{ll}
    	\frac{(n + a + b - 1)(n + a + b)}{(2n + a + b - 1)(2n + a + b)},	& \text{if $m-n=0$ and $n \geq 0$},\\
    	-\frac{2a}{a + b + 2},	& \text{if $m-n=-1$ and $n = 1$}\\
    	-\frac{2(n + a -1)(n + a + b - 1)}{(2n + a + b - 2)(2n + a + b)},	& \text{if $m-n=-1$ and $n > 1$},\\
    	\frac{(n + a - 2)(n + a - 1)}{(2n + a + b -2)(2n + a + b - 1)},	& \text{if $m-n=-2$ and $n > 1$},
	\end{array}\right.\\
	I_{n,m}^{(a,b)} &= \begin{cases}
    	- \frac{1}{a+b},	& \text{if $m-n=1$ and $n = 0$},\\
    	- \frac{(n + 1)(n + a + b - 1)}{(2n + a + b - 1)(2n + a + b)},	& \text{if $m-n=1$ and $n > 0$},\\
    	\frac{b}{a + b},	& \text{if $m-n=0$ and $n = 0$},\\
    	\frac{b^2 + a(b + 2)}{(a + b)(a + b + 2)}, & \text{if $m-n=0$ and $n = 1$},\\
    	\frac{b^2 + 2n(n + a - 1) + b(2n + a - 2)}{(2n + a + b - 2)(2n + a + b)}, & \text{if $m-n=0$ and $n > 1$},\\
    	- \frac{a b}{(a + b) (a + b + 1)},	& \text{if $m-n=-1$ and $n = 1$}\\
    	- \frac{(n + a - 1)(n + b - 1)}{(2n + a + b - 2)(2n + a + b - 1)},	& \text{if $m-n=-1$ and $n > 1$},\\
	\end{cases}\\
	J_{n,m}^{(a,b)} &= \begin{cases}
     	\frac{(b - 1) (b - 2)}{(a + b - 1) (a + b - 2)},	& \text{if $m-n=0$ and $n = 0$},\\
     	\frac{(n + b - 2)(n + b - 1)}{(2n + a + b - 2)(2n + a + b -1)},	& \text{if $m-n=0$ and $n > 0$},\\
     	-\frac{2(n + 1)(n + b - 1)}{(2n + a + b - 2)(2n + a + b)},	& \text{if $m-n=1$ and $n \geq 0$},\\
     	\frac{(n + 1)(n + 2)}{(2n + a + b - 1)(2n + a + b)},	& \text{if $m-n=2$ and $n \geq 0$}.\\
	\end{cases}      
\end{align*}

Now considering all three possible values for $\nTot_j - \mTot_j$, and all possible implications for the difference $\nIdxCmp_j - \mIdxCmp_j$, it can be shown that $1 \leq \nTot_{j-1} - \mTot_{j-1} \leq 1$ has to hold as well. Using induction shows that $1 \leq \nTot_{j} - \mTot_{j} \leq 1$ holds for all $j<i$. Thus for all $j<i$ the same integrals~\eqref{eq_first_up},~\eqref{eq_first_stay}, and~\eqref{eq_first_down}, with adjusted parameters $\alpha = \mutCmp_j$ and $\beta = \mutTot_j + 2\nTot_j$, have to be considered.

Combining these results shows that for fixed $i$ and $\nIdx$ the polynomials with a non-zero contribution to the recurrence relation for $\xCmp_i \jacobi^\mutVec_\nIdx (\xVec)$ are exactly those with indices from the set
\begin{equation}
	\M_i (\nIdx) := \Big\{ \mIdx \in \N_0^{\numAlleles-1} \Big\vert m_j \geq 0 \,\forall j, \mTot_j = \nTot_j \,\forall j>i, | \mTot_j - \nTot_j| \leq 1 \,\forall j \leq i\Big\},
\end{equation}
defined in~\eqref{eq_index_set}.
Thus,
\begin{equation}
	\xCmp_i \jacobi^\mutVec_\nIdx (\xVec) = \sum_{\mIdx \in \M_i(\nIdx)} r^{(\mutVec,i)}_{\nIdx,\mIdx} \jacobi^\mutVec_\mIdx (\xVec),
\end{equation}
where the coefficients $r^{(\mutVec,i)}_{\nIdx,\mIdx}$ are given by
\begin{equation}\label{eq_multivariate_coefficients}
	r^{(\mutVec,i)}_{\nIdx,\mIdx} = G^{(\mutCmp_i, \mutTot_i + 2\nTot_i)}_{\nIdxCmp_i,\mIdxCmp_i} \prod_{j<i} \,
	\begin{cases} 
			   \displaystyle  H^{(\mutCmp_j, \mutTot_j + 2\nTot_j)}_{\nIdxCmp_j,\mIdxCmp_j},	& \text{if $\nTot_j-\mTot_j = -1$},\\
			   \displaystyle  I^{(\mutCmp_j, \mutTot_j + 2\nTot_j)}_{\nIdxCmp_j,\mIdxCmp_j},	& \text{if $\nTot_j-\mTot_j = 0$},\\
			   \displaystyle  J^{(\mutCmp_j, \mutTot_j + 2\nTot_j)}_{\nIdxCmp_j,\mIdxCmp_j},	& \text{if $\nTot_j-\mTot_j= +1$}.
			\end{cases}
\end{equation}
\end{proof}

\begin{proof}[Proof of Lemma~\ref{lem_eigen_neutral}]
Using the coordinate transformation introduced in Appendix~\ref{app_coord_transform}, and applying $\genNeutral$, given in equation~\eqref{eq_transformed_generator}, to $\jacobiTr_{\nIdx}^{\mutVec}(\xVecTr)$ from equation~\eqref{eq_transformed_jacobis} yields
\begin{equation}\label{eq_gen_to_poly}
	\begin{split}
		\genNeutral \jacobiTr_\nIdx^{\mutVec}(\xVecTr) & = \frac{1}{2}\sum_{i=1}^{\numAlleles-1}\frac{1}{\prod_{k<i}(1-\xCmpTr_k)}\prod_{j=1,j\neq i}^{\numAlleles-1}\uJacobi_{\nIdxCmp_j}^{(\mutCmp_j,\mutTot_j+2\nTot_j)}(\xCmpTr_j)(1-\xCmpTr_j)^{\nTot_j}\\
			& \qquad \qquad \times\left(\xCmpTr_i(1-\xCmpTr_i)\frac{\partial^2}{\partial \xCmpTr_i^2}\left\{\uJacobi_{\nIdxCmp_i}^{(\mutCmp_i,\mutTot_i+2\nTot_i)}(\xCmpTr_i)(1-\xCmpTr_i)^{\nTot_i}\right\}\right.\\
			& \qquad \qquad \qquad\left.+(\mutCmp_i-\mutTot_{i-1}\xCmpTr_i)\frac{\partial}{\partial \xCmpTr_i}\left\{\uJacobi_{\nIdxCmp_i}^{(\mutCmp_i,\mutTot_i+2\nTot_i)}(\xCmpTr_i)(1-\xCmpTr_i)^{\nTot_i}\right\}\right).
	\end{split}
\end{equation}
Employing equation~\eqref{eq_R_ode}, one can show that the terms in the brackets on the right hand side of equation~\eqref{eq_gen_to_poly} reduce to \begin{equation}
(1-\xCmpTr_i)^{\nTot_i}\uJacobi_{\nIdxCmp_i}^{(\mutCmp_i,\mutTot_i+2\nTot_i)}(\xCmpTr_i)(-\nTot_{i-1}(\nTot_{i-1}-1+\mutTot_{i-1})+\frac{1}{1-\xCmpTr_i}\nTot_i(\nTot_i-1+\mutTot_i)),
\end{equation} and substitution yields 
\begin{eqnarray*}
\genNeutral \jacobiTr_\nIdx^{\mutVec}(\xVecTr) & = & \frac{1}{2}\jacobiTr_\nIdx^{\mutVec}(\xVecTr) \left(-\sum_{i=1}^{\numAlleles-1}\frac{1}{\prod_{k<i}(1-\xCmpTr_k)}\nTot_{i-1}(\nTot_{i-1}-1+\mutTot_{i-1})\right.\nonumber \\
 & & \qquad\qquad\qquad\left.+\sum_{i=2}^{\numAlleles}\frac{1}{\prod_{k<i}(1-\xCmpTr_k)}\nTot_{i-1}(\nTot_{i-1}-1+\mutTot_{i-1})\right) \\
  & = & -\lambda^\mutVec_{\vert\nIdx\vert} \jacobiTr_{\nIdx}^{\mutVec}(\xVecTr)
\end{eqnarray*}
with $\lambda^\mutVec_{\vert\nIdx\vert}=\frac{1}{2}|\nIdx|(|\nIdx|-1+|\mutVec|)$, since $\mutTot_0 = |\mutVec|$, $\nTot_0 = |\nIdx|$, and $\nTot_{K-1} = 0$.
\end{proof}

\section{Change of coordinates}
\label{app_coord_transform}

Working with the multivariate Jacobi polynomials and the neutral diffusion, it is convenient, for some derivations, to transform the equations to a different coordinate system. This transformation maps the simplex $\simplex_{\numAlleles-1}$ to the $\numAlleles-1$-dimensional unit cube $[0,1]^{\numAlleles-1}$. It is implicitly used in \citet[Section~3]{Griffiths2011}, but more explicitly introduced and used as a transformation in \citet{Baxter2007}. The vector $\xVecTr(\xVec) = (\xCmpTr_1(\xVec), \ldots, \xCmpTr_{\numAlleles-1}(\xVec))$ is obtained from the vector of population frequencies $\xVec$ via the transformation
\begin{equation}\label{eq_xi_transform}
	\xCmpTr_i = \frac{\xCmp_i}{1-\sum_{j<i}\xCmp_j}
\end{equation}
for $1 \leq i \leq \numAlleles-1$. The inverse of this transformation is given by
\begin{equation}\label{eq_x_inverse}
	\xCmp_i = \xCmpTr_i \prod_{j<i} (1 - \xCmpTr_j)
\end{equation}
for $1 \leq i \leq \numAlleles-1$. The inverse relation can be derived by noting that $1-\sum_{j<i}\xCmp_j = \prod_{j<i} (1 - \xCmpTr_j)$ holds.

Definition~\ref{def_multi_jacobi} yields immediately that the multivariate Jacobi polynomials $\jacobi_{\nIdx}^{\mutVec}(\xVec)$ take the form
\begin{equation}\label{eq_transformed_jacobis}
	\jacobiTr_{\nIdx}^{\mutVec}(\xVecTr) = \jacobi_{\nIdx}^{\mutVec}\big(\xVec(\xVecTr)\big) = \prod_{j=1}^{\numAlleles-1} \uJacobi_{\nIdxCmp_j}^{(\mutCmp_j,\mutTot_j + 2\nTot_j)}(\xCmpTr_j) (1-\xCmpTr_j)^{\nTot_j}
\end{equation}
in the transformed coordinates. The neutral diffusion generator $\genNeutral$ in the transformed coordinate system is given by the following lemma.
\begin{lemma}
Using variables in the new coordinate system, the backward generator of the diffusion under neutrality $\genNeutral$ can be written as
\begin{equation}\label{eq_transformed_generator}
\genNeutral \tilde{f}(\xVecTr) = \frac{1}{2} \sum_{i=1}^{\numAlleles-1} \diffTermTr_{i,i}(\xVecTr) \frac{\partial^{2}}{\partial \xCmpTr_{i}^{2}} \tilde{f}(\xVecTr) + \sum_{i=1}^{\numAlleles-1} \driftTermTr_{i}(\xVecTr) \frac{\partial}{\partial \xCmpTr_{i}} \tilde{f}(\xVecTr),
\end{equation}
with \begin{equation}
\diffTermTr_{i,j}(\xVecTr) = \delta_{i,j} \Bigg( \frac{\xCmpTr_{i}(1-\xCmpTr_{i})}{\prod_{k<i}(1-\xCmpTr_{k})} \Bigg)
\end{equation}
and \begin{equation}
\driftTermTr_{i}(\xVecTr) = \frac{1}{2} \frac{\mutCmp_{i}-\mutTot_{i-1}\xCmpTr_{i}}{\prod_{k<i}(1-\xCmpTr_{k})}.
\end{equation}
\end{lemma}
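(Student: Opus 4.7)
The plan is to apply the multivariate chain rule to $\genNeutral$ acting on $f(\xVec)=\tilde f(\xVecTr(\xVec))$, substitute into the defining formula of the backward generator given in~\eqref{eq_backward_generator}, and then verify algebraically that all off-diagonal second-order coefficients vanish while the remaining diagonal diffusion and drift coefficients agree with the stated $\diffTermTr_{k,k}$ and $\driftTermTr_k$.

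First I would compute the one-sided Jacobian of the coordinate change. Writing $D_k := 1 - \sum_{j<k}\xCmp_j$, so that $\xCmpTr_k = \xCmp_k/D_k$ and inductively $D_k = \prod_{j<k}(1-\xCmpTr_j)$, one finds $\partial \xCmpTr_k/\partial \xCmp_i = 0$ for $i>k$, $\partial \xCmpTr_k/\partial \xCmp_k = 1/D_k$, and $\partial \xCmpTr_k/\partial \xCmp_i = \xCmp_k/D_k^2$ for $i<k$; the Jacobian is therefore lower triangular. Using the standard chain rule for second derivatives, the coefficient of $\partial^2 \tilde f/\partial \xCmpTr_k \partial \xCmpTr_l$ in $\genNeutral f$ is
\begin{equation}
\tilde b_{k,l}(\xVecTr) = \sum_{i,j=1}^{\numAlleles-1} \xCmp_i(\delta_{i,j}-\xCmp_j)\,\frac{\partial \xCmpTr_k}{\partial \xCmp_i}\frac{\partial \xCmpTr_l}{\partial \xCmp_j},
\end{equation}
and the coefficient of $\partial \tilde f/\partial \xCmpTr_k$ is the contracted drift $\sum_i \driftTerm_i(\xVec)(\partial \xCmpTr_k/\partial \xCmp_i)$ plus an It\^o-type correction $\tfrac{1}{2}\sum_{i,j}\diffTerm_{i,j}(\xVec)(\partial^2 \xCmpTr_k/\partial \xCmp_i \partial \xCmp_j)$.

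The core step is to prove $\tilde b_{k,l} = 0$ for $k \neq l$. Assuming $k<l$, I would split the double sum into its diagonal piece ($i=j$) and its outer-product piece (the $-\xCmp_i\xCmp_j$ contribution). Two observations trivialize the cancellation: (i) the telescoping identity $\sum_{i=1}^{\numAlleles-1}\xCmp_i\,\partial \xCmpTr_k/\partial \xCmp_i = \xCmpTr_k/D_k$, verified by direct summation over $i\leq k$; and (ii) the fact that $\partial \xCmpTr_l/\partial \xCmp_i = \xCmp_l/D_l^2$ is \emph{independent} of $i$ on the range $i \leq k < l$. Together these factor the diagonal piece as $(\xCmpTr_k/D_k)\cdot(\xCmp_l/D_l^2)$, which exactly matches the outer-product piece, so the two cancel. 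Specializing to $k=l$ and simplifying gives $\tilde b_{k,k} = \xCmpTr_k(1-\xCmpTr_k)/D_k$, matching the stated $\diffTermTr_{k,k}$ after substituting $D_k = \prod_{j<k}(1-\xCmpTr_j)$.

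For the drift, the same telescoping identity, together with the decomposition $\sum_{i\leq k}\mutCmp_i\,\partial \xCmpTr_k/\partial \xCmp_i = \mutCmp_k/D_k + (|\mutVec|-\mutTot_{k-1})\,\xCmp_k/D_k^2$, collapses $\sum_i \driftTerm_i(\xVec)(\partial \xCmpTr_k/\partial \xCmp_i)$ to $(\mutCmp_k - \mutTot_{k-1}\xCmpTr_k)/(2D_k)$. I expect the main obstacle to be verifying that the It\^o correction $\tfrac{1}{2}\sum_{i,j}\diffTerm_{i,j}(\partial^2 \xCmpTr_k/\partial \xCmp_i\partial \xCmp_j)$ vanishes identically: one must compute $\partial^2 \xCmpTr_k/\partial \xCmp_i\partial \xCmp_j$ in the three cases $i=j=k$ (which gives $0$), exactly one of $i,j$ equal to $k$ (which gives $1/D_k^2$), and both $i,j<k$ (which gives $2\xCmp_k/D_k^3$), and then show by careful bookkeeping—writing $s:=1-D_k$ and collecting terms—that the $\delta_{i,j}$-contribution cancels the $-\xCmp_i\xCmp_j$-contribution exactly. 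Once this cancellation is established, the stated formula for $\driftTermTr_k$ follows, completing the proof.
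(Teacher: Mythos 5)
Your proposal is correct: the Jacobian entries, the telescoping identity $\sum_i \xCmp_i\,\partial\xCmpTr_k/\partial\xCmp_i=\xCmpTr_k/D_k$, the cancellation of the off-diagonal $\tilde b_{k,l}$, the reduction of the drift to $(\mutCmp_k-\mutTot_{k-1}\xCmpTr_k)/(2D_k)$, and the vanishing of the second-order (It\^o) correction (which collapses to a factor $1-D_k-s=0$ with $s=\sum_{i<k}\xCmp_i$) all check out. The paper does not supply its own argument but defers to Appendix~B of Baxter et al.\ (2007), which carries out essentially this same direct chain-rule computation, so your proof is a self-contained version of the same approach.
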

The proof of this lemma is paraphrased in Appendix~B of \cite{Baxter2007}. The transformation diagonalizes the operator by removing all the mixed second order partial derivatives.

\section{Coefficients of the polynomial $Q(\xVec;\selMatrix,\mutVec)$}
\label{app_poly_coeff}

\begin{eqnarray*}
q & = & \frac{1}{2}(\sum_{j=1}^{\numAlleles}\mutCmp_{j}\selCmp_{\numAlleles,j}-|\mutVec|\selCmp_{\numAlleles,\numAlleles})\text{ when ${\bf i}=\varnothing$},\\
q(i_{1}) & = & \frac{1}{2}(\sum_{j=1}^{\numAlleles}\mutCmp_{j}(\selCmp_{i_{1},j}-\selCmp_{t,\numAlleles})+\selCmp_{i_{1},\numAlleles}^{2}+\selCmp_{\numAlleles,\numAlleles}^{2}-2\selCmp_{\numAlleles,\numAlleles}\selCmp_{i_{1},\numAlleles}\\
 &  & -2(1+|\mutVec|)\selCmp_{i_{1},\numAlleles}+(1+2|\mutVec|)\selCmp_{\numAlleles,\numAlleles}+\selCmp_{i_{1},i_{1}}),\\
q(i_{1},i_{2}) & = & \frac{1}{2}(2\selCmp_{i_{1},\numAlleles}\selCmp_{i_{1},i_{2}}-3\selCmp_{i_{1},\numAlleles}\selCmp_{i_{2},\numAlleles}+8\selCmp_{i_{2},\numAlleles}\selCmp_{\numAlleles,\numAlleles}-2\selCmp_{\numAlleles,\numAlleles}\selCmp_{i_{1},i_{2}}-2\selCmp_{i_{1},\numAlleles}^{2}-3\selCmp_{\numAlleles,\numAlleles}^{2}\\
 &  & -(1+|\mutVec|)(\selCmp_{i_{1},i_{2}}+\selCmp_{\numAlleles,\numAlleles}-2\selCmp_{i_{2},\numAlleles})),\\
q(i_{1},i_{2},i_{3}) & = & \frac{1}{2}((\selCmp_{i_{1},i_{3}}-\selCmp_{i_{1},\numAlleles})(\selCmp_{i_{1},i_{2}}-\selCmp_{i_{1},\numAlleles})-(\selCmp_{i_{3},\numAlleles}-\selCmp_{\numAlleles,\numAlleles})(\selCmp_{i_{2},\numAlleles}-\selCmp_{\numAlleles,\numAlleles})\\
 &  & -4(\selCmp_{i_{2},i_{3}}+\selCmp_{\numAlleles,\numAlleles}-2\selCmp_{i_{3},\numAlleles})(\selCmp_{i_{1},\numAlleles}-\selCmp_{\numAlleles,\numAlleles})),\\
q(i_{1},i_{2},i_{3},i_{4}) & =- & \frac{1}{2}((\selCmp_{i_{1},i_{2}}+\selCmp_{\numAlleles,\numAlleles}-2\selCmp_{i_{2},\numAlleles})(\selCmp_{i_{3},i_{4}}+\selCmp_{\numAlleles,\numAlleles}-2\selCmp_{i_{4},\numAlleles})).
\end{eqnarray*}

 \section{Derivation of equation~\eqref{eq_L_on_H}}
\label{app_derivation_polynomial}

Applying $\genFull $ to $\modJacobi_n(\xVec)$,
\begin{equation}\label{eq:L_on_H}
	\begin{split}
		\genFull  \modJacobi_{\nIdx}(\xVec) & = (\genNeutral + \genSel)(\jacobi_\nIdx^{\mutVec}(\xVec)e^{-\meanFitness(\xVec)/2})\\
			& = e^{-\frac{\meanFitness(\xVec)}{2}}\genNeutral \jacobi_{\nIdx}^{\mutVec}(\xVec)+\jacobi_\nIdx^{\mutVec}(\xVec)\genNeutral e^{-\frac{\meanFitness(\xVec)}{2}}+\sum_{i,j=1}^{\numAlleles-1}\xCmp_{i}(\delta_{i,j}-\xCmp_{j})\frac{\partial}{\partial \xCmp_{i}} \big\{ e^{-\frac{\meanFitness(\xVec)}{2}} \big\} \frac{\partial}{\partial \xCmp_{j}} \big\{ \jacobi_{\nIdx}^{\mutVec}(\xVec) \big\} \\
			& \qquad +\jacobi_\nIdx^{\mutVec}(\xVec)\genSel e^{-\frac{\meanFitness(\xVec)}{2}}+e^{-\frac{\meanFitness(\xVec)}{2}}\genSel \jacobi_{\nIdx}^{\mutVec}(\xVec)\\
			& = -\lambda^\mutVec_\nIdx e^{-\frac{\meanFitness(x)}{2}}\jacobi_\nIdx^{\mutVec}(\xVec)+\jacobi_\nIdx^{\mutVec}(\xVec)\genFull  e^{-\frac{\meanFitness(\xVec)}{2}}\\
			& \qquad +\sum_{i,j=1}^{\numAlleles-1}\xCmp_{i}(\delta_{i,j}-\xCmp_{j})\frac{\partial}{\partial \xCmp_{i}} \big\{ e^{-\frac{\meanFitness(\xVec)}{2}} \big\} \frac{\partial}{\partial \xCmp_{j}} \big\{ \jacobi_{\nIdx}^{\mutVec}(\xVec) \big\} +e^{-\frac{\meanFitness(\xVec)}{2}}\genSel \jacobi_{\nIdx}^{\mutVec}(\xVec).
	\end{split}
\end{equation}
It can be shown that the last two terms in the above expression
sum up to 0. Note that for $1\leq i,j\leq \numAlleles-1,$
\begin{equation}\label{eq_partial_mean_fitness}
	\frac{\partial}{\partial \xCmp_{i}}\meanFitness(\xVec)=2\sum_{k=1}^{\numAlleles}\selCmp_{k,i}\xCmp_{k}-2\sum_{l=1}^{\numAlleles}\selCmp_{l,\numAlleles}\xCmp_{l},
\end{equation}
\begin{equation}\label{eq_partial_partial_mean_fitness}
	\frac{\partial^{2}}{\partial \xCmp_{j}\partial \xCmp_{i}}\meanFitness(\xVec)=2(\selCmp_{i,j}-\selCmp_{j,\numAlleles}-\selCmp_{i,\numAlleles}+\selCmp_{\numAlleles,\numAlleles}).	
\end{equation}
It follows that 
\begin{eqnarray*}
 &  & \sum_{i,j=1}^{\numAlleles-1}\xCmp_{i}(\delta_{i,j}-\xCmp_{j})\frac{\partial}{\partial \xCmp_{i}} \big\{ e^{-\frac{\meanFitness(\xVec)}{2}} \big\} \frac{\partial}{\partial \xCmp_{j}} \big\{ \jacobi_{\nIdx}^{\mutVec}(\xVec)  \big\}  +e^{-\frac{\meanFitness(\xVec)}{2}}\genSel \jacobi_{\nIdx}^{\mutVec}(\xVec)\\
 & = & e^{-\frac{\meanFitness(\xVec)}{2}}\Bigg[ \sum_{i=1}^{\numAlleles-1}\xCmp_{i}\frac{\partial}{\partial \xCmp_{i}} \bigg\{ -\frac{\meanFitness(\xVec)}{2} \bigg\} \frac{\partial}{\partial \xCmp_{i}}  \big\{ \jacobi_{\nIdx}^{\mutVec}(\xVec)  \big\} -\sum_{i,j=1}^{\numAlleles-1}\xCmp_{i}\xCmp_{j}\frac{\partial}{\partial \xCmp_{i}} \bigg\{ -\frac{\meanFitness(\xVec)}{2} \bigg\} \frac{\partial}{\partial \xCmp_{j}} \big\{ \jacobi_{\nIdx}^{\mutVec}(\xVec) \big\}\\
 &  & \qquad \qquad +\sum_{i=1}^{\numAlleles-1}\xCmp_{i}\frac{\partial}{\partial \xCmp_{i}} \big\{ \jacobi_{\nIdx}^{\mutVec}(\xVec) \big\} \sum_{j=1}^{\numAlleles}\selCmp_{i,j}\xCmp_{j}-\meanFitness(\xVec)\sum_{i=1}^{\numAlleles-1}\xCmp_{i}\frac{\partial}{\partial \xCmp_{i}} \big\{ \jacobi_{\nIdx}^{\mutVec}(\xVec) \big\} \Bigg] \\
 & = & e^{-\frac{\meanFitness(\xVec)}{2}} \Bigg[ -\sum_{i=1}^{\numAlleles-1}\xCmp_{i} \bigg( \sum_{k=1}^{\numAlleles}\selCmp_{k,i}\xCmp_{k}-\sum_{l=1}^{\numAlleles}\selCmp_{l,\numAlleles}\xCmp_{l} \bigg) \frac{\partial}{\partial \xCmp_{i}} \big\{ \jacobi_{\nIdx}^{\mutVec}(\xVec) \big\}\\
  &  & \qquad \qquad +\sum_{i,j=1}^{\numAlleles-1}\xCmp_{i}\xCmp_{j} \bigg( \sum_{k=1}^{\numAlleles}\selCmp_{k,i}\xCmp_{k}-\sum_{l=1}^{\numAlleles}\selCmp_{l,\numAlleles}\xCmp_{l} \bigg) \frac{\partial}{\partial \xCmp_{j}} \big\{ \jacobi_{\nIdx}^{\mutVec}(\xVec) \big\}\\
 &  & \qquad \qquad +\sum_{i=1}^{\numAlleles-1}\xCmp_{i}\frac{\partial}{\partial \xCmp_{i}} \big\{ \jacobi_{\nIdx}^{\mutVec}(\xVec) \big\} \sum_{j=1}^{\numAlleles}\selCmp_{i.j}\xCmp_{j}-\meanFitness(\xVec)\sum_{i=1}^{\numAlleles-1}\xCmp_{i}\frac{\partial}{\partial \xCmp_{i}} \big\{ \jacobi_{\nIdx}^{\mutVec}(\xVec) \big\} \Bigg] \\
 & = & e^{-\frac{\meanFitness(\xVec)}{2}} \Bigg[ \sum_{i=1}^{\numAlleles-1}\xCmp_{i}\frac{\partial}{\partial \xCmp_{i}} \big\{ \jacobi_{\nIdx}^{\mutVec}(\xVec)  \big\} \sum_{l=1}^{\numAlleles}\selCmp_{l,\numAlleles}\xCmp_{l}\\
& & \qquad \qquad -\sum_{j=1}^{\numAlleles-1}\xCmp_{j}\frac{\partial}{\partial \xCmp_{j}} \big\{ \jacobi_{\nIdx}^{\mutVec}(\xVec) \big\} \bigg( \sum_{l=1}^{\numAlleles}\selCmp_{l,\numAlleles}\xCmp_{\numAlleles}\xCmp_{l}+\sum_{i=1}^{\numAlleles-1}\xCmp_{i}\sum_{l=1}^{\numAlleles}\selCmp_{l,\numAlleles}\xCmp_{l} \bigg) \Bigg] \\
 & = & 0,\end{eqnarray*}
where we used equation~\eqref{eq_partial_mean_fitness} for the second equality and $\sum_{i=1}^{\numAlleles}\xCmp_{i}=1$ for the last equality.

Further, using equation~\eqref{eq_partial_mean_fitness} and~\eqref{eq_partial_partial_mean_fitness} one can show that
\begin{eqnarray*}
\genFull  e^{-\frac{\meanFitness(\xVec)}{2}} & = & \frac{1}{2}e^{-\frac{\meanFitness(\xVec)}{2}}\left(-\sum_{i=1}^{\numAlleles}\xCmp_{i}\selCmp_{i}^{2}(\xVec)-\sum_{i=1}^{\numAlleles}\xCmp_{i}\selCmp_{ii}+(1+|\mutVec|)\meanFitness(\xVec)+\meanFitness(\xVec)^{2}-\sum_{i=1}^{\numAlleles}\mutCmp_{i}\selCmp_{i}(\xVec)\right)\\
 & = & -e^{-\frac{\meanFitness(\xVec)}{2}}Q(\xVec;\selMatrix,\mutVec),\end{eqnarray*}
where $Q$ takes the form \eqref{eq_Q}, that is $Q(\xVec;\selMatrix,\mutVec) = \sum_{\iIdx\in\iSet}q(\iIdx)\xVec_{\iIdx}$, with the constants $q(\iIdx)$ given in Appendix~\ref{app_poly_coeff}.

\bibliographystyle{myplainnat}
\bibliography{bib}

\end{document}